\title{Breuil--M\'ezard conjectures for central division algebras}
\author{Andrea Dotto}
\date{}
\begin{document}

\maketitle

\begin{abstract}
We formulate an analogue of the Breuil--M\'ezard conjecture for the group of units of a central division algebra over a $p$-adic local field, and we prove that it follows from the conjecture for~$\GL_n$. To do so we construct a transfer of inertial types and Serre weights between the maximal compact subgroups of these two groups, in terms of Deligne--Lusztig theory, and we prove its compatibility with mod~$p$ reduction, via the inertial Jacquet--Langlands correspondence and certain explicit character formulas. We also prove analogous statements for $\ell$-adic coefficients.
\end{abstract}

\tableofcontents

\section{Introduction.}
Let~$F/\bQ_p$ be a finite extension. The Breuil--M\'ezard conjecture, as originally formulated in~\cite{BMmultiplicites} and generalized in~\cite{Kisinstructure} and~\cite{EGBM}, provides a description of the singularities of potentially semistable deformation rings for~$G_F = \Gal(\overline{F}/F)$ in terms of the representation theory of maximal compact subgroups of~$\GL_n(F)$. In~\cite{GGBM} the authors raise the question of whether an analogous statement holds for the unit groups of central division algebras, and answer it affirmatively for quaternion algebras, proving that it would follow from the truth of the conjecture for~$\GL_2(F)$. This acquires particular relevance in light of the work of Scholze~\cite{ScholzeLT} and Chojecki--Knight~\cite{CKJL} on $p$-adic Jacquet--Langlands correspondences.

In this paper we prove similar results for an arbitrary central division $F$-algebra~$D$. 
Recall that the Jacquet--Langlands correspondence is a bijection from the irreducible smooth representations of~$D^\times$ to the essentially square-integrable representations of~$\GL_n(F)$, characterized by an equality of characters on matching regular elliptic elements. It is compatible with unramified twists, hence it induces a map on inertial equivalence classes. 
Under the local Langlands correspondence for~$\GL_n(F)$, the inertial classes correspond to \emph{inertial types}, which are smooth representations of the inertia group extending to the Weil group.
The correspondence is such that two Weil--Deligne representations are Langlands parameters of inertially equivalent representations if and only if their underlying $W_F$-representations have isomorphic restrictions to inertia. 
If~$\rhobar: G_F \to \GL_n(\cbF_p)$ is a continuous representation, the choice of an $n$-dimensional inertial type~$\tau$ and a dominant cocharacter~$\lambda$ for~$\Res_{F/\bQ_p}\GL_{n, F}$ defines a quotient of the universal lifting ring~$R^\square_{\rhobar}$ of~$\rhobar$, whose points in characteristic zero correspond to potentially semistable lifts of~$\rhobar$ with Hodge type~$\lambda$ and inertial type~$\tau$. The Breuil--M\'ezard conjecture is concerned with the cycles that the mod~$p$ fibres of these rings define on $\Spec R^\square_{\rhobar}$.

To be more precise, recall that work of Henniart (appendix to~\cite{BMmultiplicites}) and Schneider--Zink \cite{SZtypes} associates to~$\tau$ certain smooth representations~$\sigma_{\fP}(\tau)$ of~$\GL_n(\mO_F)$, which refine the Bushnell--Kutzko theory of types by taking into account the monodromy operator on Langlands parameters. 
On the side of~$\GL_n(F)$, these types compute the shape of a partition~$\fP(\pi)$ attached to a representation~$\pi$ by Bernstein and Zelevinsky. 
We will only be concerned with the case of~$\tau$ corresponding to an inertial class of the form~$\fs(\tau) = \left [ \prod_{i=1}^r \GL_{n/r}(F), \pi_0^{\otimes  r} \right ]$ where~$\pi_0$ is a supercuspidal representation (these are precisely the inertial classes containing discrete series representations). In this case, $\sigma_{\fP}(\tau)$ has the property that, for a generic representation~$\pi$ of~$\GL_n(F)$, the space $\Hom_{\GL_n(\mO_F)}(\sigma_{\fP}(\tau), \pi)$ is not zero if and only if~$\pi$ is supported in~$\fs(\tau)$ and~$\fP(\pi) \geq \fP$ in the reverse of the dominance partial order on partitions of~$r$. The maximal partition~$\fP_{\max}$ is $r = 1+\cdots +1$, and for a generic~$\pi$ the partition~$\fP(\pi)$ is maximal if and only if the monodromy operator on the Langlands parameter~$\rec(\pi)$ equals zero. 

In line with this, \cite{EGBM} asks for the existence of a map
\begin{displaymath}
R_{\cbF_p}(\GL_n(\mO_F)) \to Z(R^\square_{\rhobar}/\pi)
\end{displaymath}
from the Grothendieck group of finite length $\cbF_p$-representations of~$\GL_n(\mO_F)$ to the group of cycles on~$R^\square_{\rhobar}/\pi$, such that the image of the semisimplified mod~$p$ reduction $\sigmabar_{\fP_{\max}}(\tau)$ of~$\sigma_{\fP_{\max}}(\tau)$ is $Z(R^\square_{\rhobar}(\tau, 0)_{\mathrm{cris}}/\pi)$, the cycle attached to the mod~$p$ fibre of the potentially \emph{crystalline} deformation ring with inertial type~$\tau$ and $\lambda = 0$ (one should work with coefficients in a large finite extension $E/\bQ_p$, and we do so in the paper, so that~$\pi$ is a uniformizer of~$E$). There is a similar statement for arbitrary~$\lambda$, by tensoring~$\sigma_{\fP_{\max}}(\tau)$ with the corresponding algebraic representation.

One might guess that the extension of this to semistable representations will relate~$\sigmabar_{\fP}(\tau)$ to the mod~$p$ fibre of the strata~$R_{\rhobar}^\square(\tau, \lambda)_{\fP}$ induced by the monodromy operator on the universal $\varphi, N$-module on~$R^\square_{\rhobar}(\tau, \lambda)$, which are again classified by partitions. 
However, we have found that one needs to be slightly careful in formulating this, and work instead with a virtual representation~$\sigma_{\fP}^+(\tau)$ closely related to the Schneider--Zink types. 
It has the property that, for a generic representation~$\pi$ of~$\GL_n(F)$,
\begin{align*}
\dim \Hom_{\bK}(\sigma^+_\fP(\tau), \pi) &= 1 \; \text{if~$\pi$ has inertial class~$\fs(\tau)$ and $\fP(\pi) = \fP$}\\
&= 0 \; \text{otherwise}.
\end{align*}
That these representations appear is consistent with the work of Shotton~\cite{ShottonBM} in the case of $\ell$-adic coefficients for~$\ell \not = p$. 

\paragraph{Main results.} 
With the above discussion in place, we can state our main results. 
The characteristic zero points of the stratum~$R^\square_{\rhobar}(\tau, \lambda)_{\fP_{\min}}$ indexed by the minimal partition~$\fP_{\min}$ of~$r$ correspond to potentially semistable lifts of the representation~$\rhobar$ whose Weil--Deligne representation is the Langlands parameter of an essentially square-integrable representation, and these can be transferred to~$D^\times$. 
Indeed, let~$\tau$ be a discrete series inertial type of dimension~$n$, corresponding to an inertial class~$\fs(\tau)$ of $\GL_n(F)$-representations. 
The Jacquet--Langlands correspondence provides an inertial class $\fs_D(\tau) = \JL^{-1}\fs(\tau)$ of representations of~$D^\times$, which admits types on the maximal compact subgroup~$\mO_D^\times$. 
In contrast with the case of~$\GL_n(F)$, they are not uniquely determined, and we write~$\sigma_D(\tau)$ for an arbitrarily chosen one: our results apply to all possible choices of~$\sigma_D(\tau)$. 
The weight~$\lambda$ also determines a representation of~$\mO_D^\times$, and we write~$\sigma_D(\tau, \lambda)$ for the tensor product of the two.

\begin{thmintro}[Breuil--M\'ezard conjecture for~$D^\times$, see Section~\ref{BMconjectures}]
If the geometric Breuil--M\'ezard conjecture holds for~$\GL_n(F)$, then there exists a group homomorphism
\begin{displaymath}
R_{\cbF_p}(\mO_D^\times) \to Z(R^\square_{\rhobar}/\pi)
\end{displaymath}
which for any~$(\tau, \lambda)$ sends the semisimplified mod~$p$ reduction $\sigmabar_D(\tau, \lambda)$ of~$\sigma_D(\tau, \lambda)$ to $Z(R^\square_{\rhobar}(\tau, \lambda)_{\fP_{\min}}/\pi)$.
\end{thmintro}

The theorem is proved following the same strategy as~\cite{GGBM}, but the techniques we use are different. We begin by constructing a group homomorphism
\begin{displaymath}
\JL_p: R_{\cbF_p}(\mO_D^\times) \to R_{\cbF_p}(\GL_n(\mO_F))
\end{displaymath}
via Deligne--Lusztig induction, and describing it in terms of the combinatorics of parabolic induction. 
Our main technical result is Theorem~\ref{comparison}, stating the equality $\JL_p(\sigmabar_D(\tau, \lambda)) = \sigmabar^+_{\fP_{\min}}(\tau, \lambda)$. 
Granting this, one transfers the result from~$\GL_n(F)$ to~$D^\times$ by composing with~$\JL_p$. 
In order to prove Theorem~\ref{comparison} we need a complete description of the Jacquet--Langlands correspondence in terms of type theory, which was obtained in~\cite{InertialJL}.
We deduce our result from this description, a base change procedure to unramified extensions of~$F$ originating in~\cite{BHliftingI}, and explicit computations with a number of character formulas. 

\paragraph{A Jacquet--Langlands transfer on maximal compact subgroups.}
Since~$F^\times \mO_D^\times$ is a normal subgroup of~$D^\times$ with finite cyclic quotient, one proves that every smooth irreducible representation of~$\mO_D^\times$ with complex coefficients is a type for a Bernstein component of~$D^\times$. 
It follows that our constructions in type theory give rise to a group homomorphism
\begin{displaymath}
\JL_{\bK}: R_{\cbQ_p}(\mO_D^\times) \to R_{\cbQ_p}(\GL_n(\mO_F))
\end{displaymath}
and our main results imply that the following diagram commutes. See Section~\ref{JLtransfer} for details.
\begin{equation}\label{diagramp}
\begin{tikzcd}
R_{\cbQ_p}(\mO_D^\times) \arrow[r, "\JL_{\bK}"] \arrow[d, "\br_p"] & \arrow[d, "\br_p"] R_{\cbQ_p}(\GL_n(\mO_F)) \\
R_{\cbF_p}(\mO_D^\times) \arrow[r, "\JL_p"] & R_{\cbF_p}(\GL_n(\mO_F))
\end{tikzcd}
\end{equation}

After a first version of this paper was written, we have been notified of work in preparation of Zijian Yao that makes the following equivalent construction. 
Consider the abelian group $\bigoplus_{(\tau, N)}\bZ$ where the sum is indexed by Galois inertial types~$\tau$ with monodromy operator~$N$. There is a map $R_{\cbQ_p}(\GL_n(\mO_F)) \to \bigoplus_{(\tau, N)}\bZ$, sending a representation~$\sigma$ to 
\begin{displaymath}
\left ( \dim_{\cbQ_p}\Hom_{\GL_n(\mO_F)}(\sigma, \pi_{\tau, N}) \right )_{(\tau, N)}
\end{displaymath}
for any generic irreducible representation $\pi_{\tau, N}$ such that $\rec(\pi_{\tau, N})$ has inertial type~$\tau$ and monodromy operator~$N$. 
By definition, our representations $\sigma_{\fP}^+(\tau)$ yield a section of this map. 
There is an analogous map defined for~$\mO_D^\times$, whose image is contained in the direct sum of the factors indexed by discrete series inertial types. 
Yao defines~$\JL_{\bK}$ as the map making the following diagram commute
\begin{equation}
\begin{tikzcd}
R_{\cbQ_p}(\mO_D^\times) \arrow[dr] \arrow[rr, "\JL_{\bK}"] & & R_{\cbQ_p}(\GL_n(\mO_F)) \\
& \bigoplus_{(\tau, N)}\bZ \arrow[ur, "\sigma_{\fP}^+(\tau)"] &
\end{tikzcd}
\end{equation}
and goes on to conjecture the existence of a map~$\JL_p$ making diagram~(\ref{diagramp}) commute. Our results therefore provide a proof of this.

Yao makes similar conjectures in the case of more general inner forms, as this definition of~$\JL_{\bK}$ makes sense for~$\GL_r(D)$ when formulated for those inertial types~$(\tau, N)$ extending to a Langlands parameter for~$\GL_r(D)$. At least in the case of discrete series parameters, it seems that our methods extend to this situation without too much trouble: the inertial Jacquet--Langlands correspondence is proved in full generality in \cite{InertialJL}, and there is a natural candidate for the~$\JL_p$ map, namely Lusztig induction for the twisted Levi subgroup~$\GL_r(\bd)$ of~$\GL_n(\mbf)$. 
We have chosen to focus on the simpler case of~$D^\times$: amongst other reasons for this choice, we remark that from the viewpoint of a Jacquet--Langlands correspondence for maximal compact subgroups one expects weaker results for~$\GL_r(D)$ than for~$D^\times$. 
For instance, not every irreducible representation of~$\GL_r(\mO_D)$ is a type for~$\GL_r(D)$, and~$\JL_{\bK}$ does not see any information about non-typical representations of~$\GL_r(\mO_D)$, except their multiplicities in restrictions of $\GL_r(D)$-representations.

\paragraph{}
We have the following parallel statement for $\ell$-adic coefficients when~$\ell \not = p$.

\begin{thmintro}(Theorem~\ref{modlJL}.)
There exists a (necessarily unique) morphism~$\JL_\ell$ making the following diagram commute.
\begin{equation}
\begin{tikzcd}
R_{\cbQ_\ell}(\mO_D^\times) \arrow[r, "\JL_{\bK}"] \arrow[d, "\br_\ell"] & \arrow[d, "\br_\ell"] R_{\cbQ_\ell}(\GL_n(\mO_F)) \\
R_{\cbF_\ell}(\mO_D^\times) \arrow[r, "\JL_\ell"] & R_{\cbF_\ell}(\GL_n(\mO_F))
\end{tikzcd}
\end{equation}
\end{thmintro}

The uniqueness statement follows from the fact that the reduction mod~$\ell$ map for~$\mO_D^\times$ is surjective. In fact we can give an explicit description of~$\JL_\ell$ in terms of~$\JL_{\bK}$, and (as usual) the theorem has no new content when~$\ell$ does not divide the pro-order of~$\GL_n(\mO_F)$, since in this case both vertical arrows are isomorphisms. It is worth stating explicitly a difference with the case $\ell = p$. The mod~$p$ irreducible representations of~$\mO_D^\times$ are characters, and they lift to level zero types for~$D^\times$. Hence compatibility with the Jacquet--Langlands transfer of level zero types already determines the~$\JL_p$ map uniquely, and compatibility for all types imposes a strong constraint on their mod~$p$ reductions. When~$\ell \not = p$, there are a lot more irreducible $\cbF_\ell$-representations of~$\mO_D^\times$, and the only congruences arise between types with the same endo-class. This allows us to construct $\JL_\ell$ by fixing the endo-class and studying the mod~$\ell$ reduction of the level zero part, which is what is done in the proof of Theorem~\ref{modlJL}.

From our theorem together with~\cite[Theorem~4.6]{ShottonBM} (which requires the assumption that $p \not = 2$), we deduce that a form of the geometric Breuil--M\'ezard conjecture holds for~$D^\times$ and $\ell$-adic coefficients, expressing the fact that congruences between the special fibres of discrete series deformation rings are described by mod~$\ell$ congruences between types on the maximal compact subgroup of~$D^\times$. See Theorem~\ref{modlBM}.

\paragraph{Structure of the article.} The paper is organized as follows. 
Section~\ref{finitegroups} is about Deligne--Lusztig theory and begins with definitions and some simple results that are certainly well-known but we could not find in the literature in the exact form we needed (although for instance~\cite[1.18]{LusztigCoxeter} is closely related). Then we specialize to~$\GL(n)$: we study the structure of parabolic induction, give a character formula~(Proposition~\ref{DLinduction}) and construct the representations~$\sigma_{\fP}^+$ (Theorem~\ref{sigmamax}). 
Section~\ref{typetheory} recalls the results of~\cite{SZtypes} and proves analogues for~$D^\times$.
We repeat the Schneider--Zink construction for~$\sigma_{\fP}^+$ and construct our virtual representations~$\sigma_{\fP}^+(\tau)$. 
We end with two formulas for the trace of~$\sigma_{\fP_{\min}}^+(\tau)$ and~$\sigma_D(\tau)$ on pro-$p$-regular conjugacy classes of~$\GL_n(\mO_F)$ and~$\mO_D^\times$, and relate them via a computation of formal degrees. 
Section~\ref{Galoisdeformationtheory} recalls the monodromy stratification (see also~\cite{Pyvovarovthesis}) and states the geometric Breuil--M\'ezard conjecture for potentially semistable deformation rings. 
The connection with~\cite{ShottonBM} is made explicit. 
Finally, we define our Jacquet--Langlands transfers of weights and types in Section~\ref{JLtransfer} and prove our main theorems in Sections~\ref{JLtransfer} and~\ref{BMconjectures}.

\paragraph{Acknowledgments.} I am grateful to Toby Gee for suggesting the problem and for helpful conversations. I have also benefited from discussions on these and related matters with Emma Knight, Daniel Le, Stefano Morra and Zijian Yao at the thematic trimester on Algebraic Groups and Geometrization of the Langlands Program at ENS Lyon. Finally, thanks are due to the referee for several useful comments. This work was supported by the Engineering and Physical Sciences Research Council [EP/L015234/1], The EPSRC Centre for Doctoral Training in Geometry and Number Theory (The London School of Geometry and Number Theory), University College London, and Imperial College London; as well as by a Royal Society University Research Fellowship.

\paragraph{Notation and conventions.}
We use the same notation as~\cite{InertialJL}, so that if~$F$ is a local field we write~$\mbf$ for its residue field and~$\mu_F$ for the group of Teichm\"uller (i.e. prime-to-$p$) roots of unity in~$F^\times$. We fix an algebraic closure~$\overline{F}$ of~$F$, and write~$F_n$ for the unramified extension of~$F$ in~$\overline{F}$ of degree~$n$ and~$\mbf_n$ for its residue field. In general, $\bk_n$ denotes an extension of the finite field~$\bk$ of degree~$n$. A character of~$\bk_n^\times$ is called $\bk$-regular if its orbit under the action of~$\Gal(\bk_n/\bk)$ has~$n$ distinct elements.
For an endo-class~$\Theta_F$ over~$F$ we write~$\delta(\Theta_F)$ for the degree over~$F$ of a parameter field of~$\Theta_F$, $e(\Theta_F)$ for its ramification index and~$f(\Theta_F)$ for its residue field degree. We write~$\bK$ for the maximal compact subgroup~$\GL_n(\mO_F)$ of~$\GL_n(F)$.

We consider partitions of a positive integer~$n$ as functions $\fP: \bZ_{>0} \to \bZ_{\geq 0}$ with finite support, such that $\sum_{i \in \bZ_{>0}}i\fP(i) = n$. 
Whenever~$n$ is an integer and~$p$ a prime number, we write~$n_p$ for the highest power of~$p$ dividing~$n$ and~$n_{p'} = n/n_p$.

Parabolic induction from a block-diagonal Levi subgroup of $\GL(n)$ is always taken with respect to the corresponding upper-triangular parabolic subgroup. We consider normalized induction for~$\GL_n(F)$ unless stated otherwise. From Section~\ref{typetheory}, whenever dealing with a finite general linear group~$\GL_n(\bF_q)$ we will write~$R_w$ for the Deligne--Lusztig induction from an elliptic maximal torus (the type of such a torus consists of the $n$-cycles, and its group of rational points is isomorphic to~$\bF_{q^n}^\times$).

Unless stated otherwise, representations will have complex coefficients and representations of locally profinite groups will be smooth. The local Langlands correspondence for~$\GL_n(F)$ is denoted~$\rec$. If~$p$ is a prime number, any choice of an isomorphism $\iota_p : \cbQ_p \to \bC$ gives rise to a local Langlands correspondence~$\rec_{\cbQ_p}$ for smooth representations with $\cbQ_p$-coefficients. This depends on the choice of~$\iota_p$ only up to an unramified twist, hence its behaviour on inertial classes of representations is independent of the choice of~$\iota_p$.

\section{Representation theory of~$\GL_n(\bF_q)$.}\label{finitegroups} Fix a prime number~$p$ and let~$q$ be a power of~$p$. In this section we recall the combinatorial classification, in terms of partitions, of the complex irreducible representations of~$G = \GL_n(\bF_q)$ with simple supercuspidal support, following~\cite[Sections~3,4]{SZtypes}. We give a construction, in terms of Deligne--Lusztig theory, of a certain virtual representation with special properties with respect to this classification, which will appear in the construction of the element of $R_{\cbF_p}(\GL_n(\mO_F))$ corresponding to the minimal stratum of a Galois deformation ring.

\paragraph{Harish--Chandra series.} Every irreducible representation~$\pi$ of~$G$ has a supercuspidal support, which is unique up to conjugacy. The \emph{simple} supercuspidal supports are those conjugate to
\begin{displaymath}
r\pi_0 = \left ( \prod_{i=1}^r \GL_{n/r}(\bF_q), \pi_0^{\otimes r} \right)
\end{displaymath} 
for some positive divisor~$r$ of~$n$ and some supercuspidal representation~$\pi_0$ of~$\GL_{n/r}(\bF_q)$. There exists a unique nondegenerate representation supported in~$r\pi_0$, denoted $\St(\pi_0, r)$. To classify the others, we consider partitions~$\fP$ of~$r$, and to each~$\fP$ we associate a block-diagonal Levi subgroup
\begin{displaymath}
L_{\fP}(\pi_0) = \prod_{i \in \bZ_{>0}}\GL_{ni/r}(\bF_q)^{\times \fP(i)}
\end{displaymath}
and a parabolically induced representation of~$\GL_n(\bF_q)$
\begin{displaymath}
\pi_{\fP}(\pi_0) = \times_{i \in \bZ_{>0}} \St(\pi_0, i)^{\times \fP(i)}. 
\end{displaymath}
The partition~$\fP_{\max}$ sending~$1$ to~$r$ and every other positive integer to~$0$ corresponds to writing~$r$ as a sum of~$1$. The representation~$\pi_{\fP_{\max}}(\pi_0)$ is the full parabolic induction $\pi_0^{\times r}$. The Harish-Chandra series corresponding to~$r\pi_0$ is the set of irreducible representations of~$G$ with supercuspidal support~$r\pi_0$.
It coincides with the set of Jordan--H\"older factors of~$\pi_{\fP_{\max}}(\pi_0)$.

Write $\fP' \leq \fP$ for the reverse of the dominance partial order on partitions, as in~\cite{SZtypes}. Then~$\fP_{\max}$ is the maximal element amongst partitions of~$r$. There is a bijection $\fP \mapsto \sigma_{\fP}(\pi_0)$ from the set of partitions of~$r$ to the Harish-Chandra series for~$r\pi_0$, characterized by the fact that~$\sigma_{\fP}(\pi_0)$ occurs in~$\pi_{\fP'}(\pi_0)$ if and only if~$\fP \leq \fP'$, and it occurs in~$\pi_{\fP}(\pi_0)$ with multiplicity one. The smallest element amongst partitions of~$r$ is denoted~$\fP_{\min}$ and sends~$r$ to~$1$ and every other positive integer to~$0$. We have $\sigma_{\fP_{\min}}(\pi_0) = \pi_{\fP_{\min}}(\pi_0) = \St(\pi_0, r)$.

When $\fP \leq \fP'$, the multiplicity of $\sigma_{\fP}(\pi_0)$ in $\pi_{\fP'}(\pi_0)$ is by definition the \emph{Kostka number~$K_{\fP, \fP'}$}. It depends only on the two partitions~$\fP, \fP'$, and not on the representation~$\pi_0$.
More precisely, the standard definition of the Kostka numbers is formulated in terms of the representation theory of symmetric groups, as in~\cite[Section~6.1]{ShottonBM}, and it is related to the representation theory of finite general linear groups in~\cite[Corollary~6.10]{ShottonBM}.
Our normalizations coincide with~\cite[Definition~6.2]{ShottonBM}, since the partial order that appears there is the reverse of~$\leq$.

\paragraph{Lusztig induction.}

We follow the presentation of Deligne--Lusztig theory in~\cite{DMbook}. 
The material in this paragraph is mostly standard, but we need to fix notations and to provide certain results about products and Weil restriction of scalars that are probably well-known but we could not find in the literature (although~\cite[1.18]{LusztigCoxeter} is closely related).
So we have decided to provide the proofs.

Let~$\bG_0$ be a connected reductive group over~$\bk = \bF_q$, fix an algebraic closure~$\cbk$ of~$\bk$, and write $\bG= \bG_0 \times_{\bk} \cbk$. 
The rational structure~$\bG_0$ gives rise to a $\cbk$-linear Frobenius endomorphism~$F$ of~$\bG$, the pullback of the absolute $q$-th power Frobenius morphism of~$\bG_0$. The Galois group~$\Gal(\cbk / \bk)$ acts to the right on~$\bG$, via $\bF_q$-linear automorphisms. We write~$\varphi$ for the geometric Frobenius element of the Galois group, acting as $x \mapsto x^{1/q}$. If~$\bH$ is a subgroup of~$\bG$ we will write $F\bH$ for the parabolic subgroup $\varphi(\bH)$ of~$\bG$, whose group of $\cbk$-points is~$F(\bH(\cbk))$. We will say that~$\bH$ is \emph{$F$-stable}, or \emph{rational}, if~$F\bH = \bH$. Recall from~\cite[Definition~8.3]{DMbook} the invariant~$\epsilon_{\bG_0} = (-1)^{\eta(\bG_0)}$, where the $\bF_q$-rank~$\eta(\bG_0)$ is the dimension of the maximal split subtorus of any quasisplit rational maximal torus in~$\bG_0$ (the quasisplit rational maximal tori are those contained in a rational Borel subgroup). 

Fix a parabolic subgroup~$\bP$ of~$\bG$, with unipotent radical~$\bU$ and $F$-stable Levi factor~$\bL$ (without assuming that~$\bP$ is $F$-stable). The associated Deligne--Lusztig varieties can be defined in terms of the Lang isogeny
\begin{displaymath}
\mL: \bG \to \bG, x \mapsto x^{-1}F(x)
\end{displaymath}
by setting
\begin{align*}
\bX_{\bL \subset \bP}^\bG &= \mL^{-1}(F\bP) / (\bP \cap F\bP)\\
\bY_{\bL \subset \bP}^\bG &= \mL^{-1}(F \bU) / (\bU \cap F\bU).
\end{align*}
Both varieties have an action of~$\bG^F \cong \bG(\bk)$ by left multiplication, and~$\bY_{\bL \subset \bP}^\bG$ has an action of~$\bL^F \cong \bL(\bk)$ by right multiplication. We write~$H_c^*(\bY_{\bL \subset \bP}^\bG)$ for the alternating sum $\sum_{i \in \bZ} (-1)^i[H^i_c(\bY_{\bL \subset \bP}^\bG, \cbQ_\ell)]$ of compactly supported $\ell$-adic cohomology groups, for a prime number $\ell \not = p$. Each cohomology group carries a left action of~$\bG^F$ and a right action of~$\bL^F$. The associated Lusztig induction functor is
\begin{displaymath}
R_{\bL \subset \bP}^{\bG}: R_{\cbQ_\ell}(\bL^F) \to R_{\cbQ_\ell}(\bG^F), \; [V] \mapsto H_c^*(\bY_{\bL \subset \bP}^\bG) \otimes_{\cbQ_\ell[\bL^F]} V.
\end{displaymath}
On characters, we have the formula (see~\cite[Proposition~4.5]{DMbook})
\begin{displaymath}
R_{\bL \subset \bP}^{\bG}(\theta)(g) = |\bL^F|^{-1} \sum_{l \in \bL^F}\sum_{i \in \bZ}(-1)^i\tr \left ( (g, l) | H^i_c(\bY_{\bL \subset \bP}^\bG, \cbQ_\ell) \right ) \theta(l^{-1}).
\end{displaymath}

\begin{rk}
Since $\bU \cap F\bU$ is an affine space we  obtain the same induction functor via the bimodule $H^*_c(\mL^{-1}(F\bU))$. This is the functor denoted $R_{\bL \subset F\bP}^\bG$ in~\cite{DMbook}, since their~$R_{\bL \subset \bP}^\bG$ is constructed via $H^*_c(\mL^{-1}(\bU))$.
\end{rk}

When~$\bL$ is a maximal torus, there is another description of Lusztig induction via the Bruhat decomposition of~$\bG$. Fix a pair $(\bB, \bT)$ consisting of an $F$-stable maximal torus and an $F$-stable Borel subgroup of~$\bG$ containing~$\bT$. By~\cite[Lemma~1.13]{DLreps} there is a bijection between the $\bG^F$-conjugacy classes of pairs $(\bB', \bT')$ consisting of a Borel subgroup of~$\bG$ and a rational maximal torus of~$\bB'$, and the Weyl group~$W(\bT)$, given by the map $(g \bB g^{-1}, g \bT g^{-1}) \mapsto g^{-1}F(g)$ (here $g \in \bG(\cbk)$). The $F$-conjugacy classes in~$W(\bT)$ are the equivalence classes for $x\sim g x F(g^{-1})$, and they classify $\bG^F$-conjugacy classes of $F$-stable maximal tori in~$\bG$ by~\cite[Corollary~1.14]{DLreps}. For~$w$ in~$W(\bT)$, we write~$\bT_w$ for an $F$-stable maximal torus in~$\bG$ classified by the $F$-conjugacy class of~$w$, and we say that~$w$ is the \emph{type} of~$\bT_w$.

The Bruhat decomposition for~$\bG$ is $\bG = \bigsqcup_{w \in W(\bT)} \bB \dot{w} \bB$ for any choice of representatives~$\dot{w}$ of~$W(\bT)$ in~$\bG$ (it is independent of the choice of~$\dot{w}$). The quotient $\bB w \bB/\bB$ is a Schubert cell in the flag variety~$\bG/\bB$, and there is an associated Deligne--Lusztig variety
\begin{displaymath}
\bX(w) = ( \mL^{-1}(\bB w \bB) )/\bB
\end{displaymath}
together with a covering
\begin{displaymath}
\bY(\dot{w}) = (\mL^{-1}(\bU\dot{w}\bU))/\bU
\end{displaymath}
induced by the canonical surjection $\bG/\bU \to \bG/\bB$. Both varieties have a left multiplication action by~$\bG^F$. If we equip~$\bT$ with the twisted Frobenius endomorphism $wF: t \mapsto w F(x) w^{-1}$, then the group of fixed points $\bT^{wF}$ acts by right multiplication on~$\bY(\dot{w})$. One checks as in \cite[1.8]{DLreps} that the isomorphism class of this covering, together with the action of~$\bT^{wF}$ and~$\bG^F$, is independent of the choice of~$\dot{w}$.

Now consider a pair $(\bB', \bT')$ consisting of a Borel subgroup of~$\bG$ and a rational maximal torus of~$\bB'$, classified as in the above by some~$w \in W(\bT)$. By~\cite[Proposition~1.19]{DLreps} whenever we have~$x \in \bG$ with $(\bB', \bT') = x(\bB, \bT)x^{-1}$ and~$\mL(x) = \dot{w}$, the map $g\mapsto gx^{-1}$ induces an isomorphism $\bY(\dot{w}) \to \bY_{\bT' \subset \bB}^{\bG}$ that is equivariant for the isomorphism $\ad(x) : \bT^{wF} \to (\bT')^F$, and $\bG^F$-equivariant.

It follows that we can attach to each element $w \in W(\bT)$ an induction map
\begin{displaymath}
R_w: R_{\cbQ_\ell}(\bT^{wF}) \to R_{\cbQ_\ell}(\bG^F)
\end{displaymath}
via the cohomology $H^*_c(\bY(\dot{w}))$ for any representative~$\dot{w}$ of~$w$.

\paragraph{} We need to study the behaviour of the maps~$R_w$ with respect to Weil restriction of scalars and products. 
Define $\bG_n = \bG_0 \times_{\bk} \bk_n$ and
\begin{displaymath}
\bG^+_0 = \Res_{\bk_n/\bk} \left ( \bG_0 \times_{\bk} \bk_n \right ).
\end{displaymath}
The base change $\bG^+ = \bG^+_0 \times_{\bk} \cbk$ is isomorphic to a product $\prod_{i=1}^n \bG$, and its Frobenius endomorphism acts (on $R$-points, for any $\cbk$-algebra~$R$) by
\begin{displaymath}
(g_1, \ldots, g_m) \mapsto (F(g_m), F(g_1), \ldots, F(g_{m-1}))
\end{displaymath}
where the map $F: \bG(R) \to \bG(R)$ is the Frobenius endomorphism for the $\bk$-structure~$\bG_0$ (so that the one for  the $\bk_n$-structure $\bG_n$ is~$F^n$). 
Notice that projection on the first factor $(\bG^+)^F \to \bG^{F^n}$ is an isomorphism. We fix an $F^n$-stable pair~$(\bB, \bT)$ in $\bG$ and work with the $F$-stable pair $(\bB^+, \bT^+) = (\prod_{i=1}^n \bB, \prod_{i=1}^n \bT)$ in~$\bG^+$. Then there is an inclusion $\iota: W(\bT) \to W(\bT^+), w \mapsto (w, 1, \ldots, 1)$, inducing a bijection on $F$-conjugacy classes. Indeed, we see that $(w, 1, \ldots, 1)$ and $(xwF^n(x^{-1}), 1, \ldots, 1)$ are $F$-conjugates by $(x, F(x), \ldots, F^{n-1}(x))$, and given an arbitrary $x = (x_1, \ldots, x_n)$ we can always find~$\alpha = (\alpha_1, \ldots, \alpha_n)$ such that
$\alpha x F(\alpha)^{-1}$ is in the image of~$\iota$: it suffices to choose~$\alpha_1$ arbitrarily and to solve the equations $\alpha_i x_i F(\alpha_{i-1})^{-1} = 1$ recursively, for $2 \leq i \leq n$.

\begin{lemma}\label{Weilrestriction}
Let~$w \in W(\bT)$. There is an isomorphism $(\bT^+)^{\iota(w)F} \to \bT^{wF^n}$ identifying $R_w$ and $R_{\iota w}$.
\end{lemma}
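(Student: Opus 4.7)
The plan is to unwind the definitions on both sides and exhibit projection onto the first coordinate as an isomorphism of Deligne--Lusztig data, from which the statement follows formally.

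First, for the tori: since the Frobenius on $\bG^+$ is the cyclic shift $(g_1,\ldots,g_n) \mapsto (F(g_n), F(g_1), \ldots, F(g_{n-1}))$ and $\iota(w)$ acts by conjugation only on the first factor, a fixed point of $\iota(w)F$ on $\bT^+$ satisfies $t_i = F^{i-1}(t_1)$ for $2 \leq i \leq n$ together with the consistency relation $t_1 = wF^n(t_1)w^{-1}$. Projection to the first coordinate therefore induces an isomorphism $(\bT^+)^{\iota(w)F} \to \bT^{wF^n}$.

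For the Deligne--Lusztig varieties I would fix the lift $\iota(\dot w) = (\dot w, 1, \ldots, 1)$ and analyze the preimage $\mL^{-1}(\bU^+ \iota(\dot w)\bU^+) \subset \bG^+$ componentwise. A tuple $(g_1,\ldots,g_n)$ lies in this preimage precisely when $g_i^{-1}F(g_{i-1}) \in \bU$ for $2 \leq i \leq n$ and $g_1^{-1}F(g_n) \in \bU \dot w \bU$; the first $n-1$ conditions force $g_i \in F^{i-1}(g_1)\bU$, and using that $\bU$ is $F$-stable one sees the remaining condition is equivalent to $g_1^{-1}F^n(g_1) \in \bU \dot w \bU$. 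The right $\bU^+$-action is free on the last $n-1$ coordinates and allows one to uniquely normalize $g_i = F^{i-1}(g_1)$, leaving the residual right action of $\bU \times \{1\}^{n-1}$ on $g_1$, which is exactly the one defining $\bY(\dot w)$ for $\bG$ equipped with its $\bk_n$-structure. Projection to the first coordinate therefore descends to an isomorphism of varieties $\bY^+(\iota(\dot w)) \to \bY(\dot w)$.

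Finally, equivariance is essentially automatic: the left $(\bG^+)^{F} \cong \bG^{F^n}$-action on both sides is left multiplication, clearly compatible with projection, and the right torus action of $(\bT^+)^{\iota(w)F}$ matches the $\bT^{wF^n}$-action on $\bY(\dot w)$ under the torus identification, since the normalizations $t_i = F^{i-1}(t_1)$ and $g_i = F^{i-1}(g_1)$ are compatible. Passing to $\ell$-adic cohomology yields an isomorphism of $(\bG^{F^n},\bT^{wF^n})$-bimodules, hence the desired identification of $R_w$ with $R_{\iota w}$. I do not foresee serious obstacles beyond this bookkeeping: the cyclic shape of the Frobenius on $\bG^+$ makes both the torus condition and the Lang condition telescope down to their $F^n$-counterparts under first-factor projection, and the main care is needed in keeping left and right actions straight and in verifying that the $\bU^+$-normalization commutes correctly with the torus action.
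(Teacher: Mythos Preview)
Your proposal is correct and follows essentially the same approach as the paper: projection onto the first factor gives both the torus isomorphism and an equivariant isomorphism of Deligne--Lusztig varieties, and the Lang condition on $\bG^+$ telescopes to the $F^n$-Lang condition on~$\bG$. The only cosmetic difference is that the paper first writes out the decomposition of the Bruhat cell $\bB^+\iota(w)\bB^+ = \bB w\bB \times \bB \times \cdots \times \bB$ before passing to the unipotent version, whereas you work directly with $\bU^+\iota(\dot w)\bU^+$; the content is identical.
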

\begin{proof}
The isomorphism is again projection on the first factor. Indeed, the fixed points in the target are given by $(t, F(t), \ldots, F^{n-1}(t))$ with the property that $t = wF^n(t)w^{-1}$. For the identification of Lusztig functors, we have that the cell $\bB^+ \iota(w) \bB^+$ decomposes as a product $\bB w \bB \times \bB \times \cdots \times \bB$, and so the preimage $\mL^{-1}(\bB^+ \iota(w) \bB^+)$ is given on $\cbk$-points by
\begin{displaymath}
(g, F(g)b_1, \ldots, F^{m-1}(g)b_{m-1})
\end{displaymath}
for arbitrary $b_i \in \bB(\cbk)$ and $g \in \bG(\cbk)$ such that $g^{-1}F^m(g) \in \bB w \bB$. A similar calculation works for the unipotent groups, after choosing a representative~$\dot{w}$ of~$w$ and the corresponding representative~$(\dot{w}, 1 \ldots, 1)$ of~$\iota(w)$. It follows that projection onto the first component induces a bijection $\bY(\dot{w}, 1, \ldots, 1) \to \bY(\dot{w})$, which is equivariant with respect to our isomorphisms $(\bG^+)^F \to \bG^{F^n}$ and $(\bT^+)^{\iota(w)F} \to \bT^{wF^n}$.
\end{proof}

\begin{lemma}\label{products}
For~$i \in \{1, \ldots, n \}$, fix connected reductive groups~$\bG_{0, i}$ over~$\bk$, pairs~$(\bB_i, \bT_i)$ in~$\bG_i$, and elements~$w_i \in W(\bT_i)$. Let~$\bG_0 = \prod_i \bG_{0, i}$ with~$(\bB, \bT) = (\prod_i \bB_i, \prod_i, \bT_i)$, and $\dot{w} = (\dot{w}_1, \cdots, \dot{w}_n)$. 
Then $R_w: R_{\cbQ_\ell}(\prod_i \bT_i^F) \to R_{\cbQ_\ell}(\prod_i \bG_i^F)$ sends a one-dimensional character~$\chi_1\cdots\chi_n$ to $R_{w_1}(\chi_1)\cdots R_{w_n}(\chi_n)$.
\end{lemma}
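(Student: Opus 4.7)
The plan is to reduce the statement to a geometric factorization of the Deligne--Lusztig variety, followed by an application of the Künneth formula for $\ell$-adic cohomology with compact supports.

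First I would check that the Deligne--Lusztig variety $\bY(\dot{w})$ attached to $\bG_0 = \prod_i \bG_{0,i}$ and $\dot{w} = \dot{w}_1 \times \cdots \times \dot{w}_n$ decomposes as a product. Since the Lang isogeny $\mL$ of~$\bG$ factors coordinatewise as $\prod_i \mL_i$, the unipotent radical $\bU = \prod_i \bU_i$ satisfies $\bU \dot{w} \bU = \prod_i \bU_i \dot{w}_i \bU_i$, and the quotient $\mL^{-1}(\bU \dot{w} \bU)/\bU$ identifies with $\prod_i \mL_i^{-1}(\bU_i \dot{w}_i \bU_i)/\bU_i$. Hence there is a $\bG^F \times \bT^{wF}$-equivariant isomorphism
\begin{displaymath}
\bY(\dot{w}) \;\cong\; \prod_{i=1}^n \bY(\dot{w}_i),
\end{displaymath}
compatible with the product decompositions $\bG^F = \prod_i \bG_i^F$ and $\bT^{wF} = \prod_i \bT_i^{w_i F}$.

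Next I would apply the Künneth formula for $\ell$-adic cohomology with compact supports on these separated finite-type $\cbk$-schemes to obtain an isomorphism of $\bG^F \times \bT^{wF}$-bimodules
\begin{displaymath}
H^*_c(\bY(\dot{w}), \cbQ_\ell) \;\cong\; \bigotimes_{i=1}^n H^*_c(\bY(\dot{w}_i), \cbQ_\ell)
\end{displaymath}
in the Grothendieck group. For a product character $\chi = \chi_1 \cdots \chi_n$ of $\bT^{wF} = \prod_i \bT_i^{w_i F}$, the one-dimensional $\cbQ_\ell[\bT^{wF}]$-module $\cbQ_\ell(\chi)$ factors as $\bigotimes_i \cbQ_\ell(\chi_i)$, so that tensoring the above Künneth decomposition over $\cbQ_\ell[\bT^{wF}]$ with $\cbQ_\ell(\chi)$ yields
\begin{displaymath}
R_w(\chi) \;=\; \bigotimes_{i=1}^n R_{w_i}(\chi_i),
\end{displaymath}
which is the desired identity in $R_{\cbQ_\ell}(\prod_i \bG_i^F)$.

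I do not expect any real obstacle: the decomposition of $\bY(\dot{w})$ is immediate from the product structure of Lang's map, and Künneth applies cleanly since the Deligne--Lusztig varieties are smooth, separated, and of finite type. The only point that requires mild care is the bookkeeping that ensures the actions of $\bG^F$ on the left and $\bT^{wF}$ on the right factor through the product decompositions; this is routine from the construction of $\bY(\dot{w})$. One could alternatively verify the identity at the level of character formulas using the product decomposition of the Lefschetz number $\sum_i (-1)^i \tr((g,t) \mid H^i_c(\bY(\dot{w})))$ under the Künneth isomorphism, but the cohomological approach above seems cleanest.
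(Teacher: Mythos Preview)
Your proposal is correct and follows essentially the same approach as the paper: an equivariant product decomposition $\bY(\dot{w}) \cong \prod_i \bY(\dot{w}_i)$ followed by the K\"unneth formula. The paper's proof is terser (it simply refers back to the argument of the preceding lemma for the product decomposition and then invokes K\"unneth), but the content is the same.
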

\begin{proof}
As in the proof of Lemma~\ref{Weilrestriction} we have an equivariant bijection $\bY(\dot{w}) \to \prod_i \bY(\dot{w}_i)$, and the claim follows from the K\"unneth formula for the cohomology of~$\bY(\dot{w})$.
\end{proof}

\paragraph{A character formula.} We now specialize to the case of~$\bG_0 = \GL_{n, \bk}$, with~$\bB$ the upper triangular Borel subgroup and $\bT$ the diagonal torus. The Weyl group~$W(\bT)$ identifies with the symmetric group~$S_n$, the $F$-conjugacy classes coincide with the conjugacy classes, and we normalize the lifts~$\dot{w}$ via permutation matrices. We give a formula for the Lusztig induction map corresponding to the Weyl group element~$w = (1, 2, \ldots, n)$, on semisimple conjugacy classes. The group~$\bT^{wF}$ is isomorphic to~$\bk_n^\times$. Choosing a basis of~$\bk_n$ as a $\bk$-vector space yields an inclusion $\Res_{\bk_n/\bk}\bbG_m \to \GL_{n, \bk}$ contained in the $\bG^F$-conjugacy class of rational maximal tori classified by~$w$. These tori all have the same $\epsilon$-invariant, which we will denote by~$\epsilon_w$. 
Notice that in our case the signs $\epsilon_{\bG_0} = (-1)^n$ and~$\epsilon_w = -1$, but it will sometimes be convenient not to make them explicit.
The following proposition is a very special case of the Lusztig classification we will discuss later, and reformulates the Green parametrization of cuspidal representations in terms of Deligne--Lusztig induction.

\begin{pp}\label{Greenparametrization}
Let~$\chi: \bk_n^\times \to \cbQ_l^\times$ be a $\Gal(\bk_n/\bk)$-regular character. 
Then the function $(-1)^{n-1} R_w(\chi)$ is the character of an irreducible cuspidal representation of~$\GL_n(\bk)$.
The map~$\chi \mapsto (-1)^{n-1} R_w(\chi)$ induces a bijection from the set of orbits of~$\Gal(\bk_n/\bk)$ on the $\bk$-regular characters of~$\bk_n^\times$, to the irreducible cuspidal representations of $\GL_n(\bk)$ over~$\cbQ_l$.
\end{pp}

\begin{rk}\label{DLcoefficients}
If~$\chi$ is not regular then it is not always the case that~$(-1)^{n-1}R_w(\chi)$ is effective.
However, these virtual representations will be important for us, since they will give rise to the Breuil--M\'ezard cycles of discrete series deformation rings.
\end{rk}

In the next proposition, we compute the character~$(-1)^{n-1} R_w(\chi)$ on semisimple classes, generalizing a well-known calculation in the case of $\bk$-regular~$\chi$.

\begin{pp}\label{DLinduction}
Let $\chi: \bk_n^\times \to \cbQ_\ell^\times$ be a character, and let~$w$ be the conjugacy class of $n$-cycles in the symmetric group~$S_n$. 
Then $R_w(\chi)$ vanishes on semisimple conjugacy classes of~$\GL_n(\bk)$ not represented in~$\bk_n^\times$, and for~$x \in \bk_n^\times$ we have
\begin{displaymath}
\epsilon_{\bG_0}\epsilon_{w}R_w(\chi)(x) = (-1)^{n+n/\deg(x)}(\GL_{n/\deg(x)}(\bk_{\deg(x)}) : \bk_n^\times)_{p'} \sum_{\gamma \in \Gal(\bk_{\deg(x)}/\bk)} \chi(\gamma x)
\end{displaymath}
where $\deg(x)$ is the degree of~$x$ over~$\bk$.
\end{pp}
\begin{proof}
By~\cite[Proposition~7.5.4]{Carterbook}, we have the equality
\begin{displaymath}
\epsilon_{\bG_0}\epsilon_w R_w(\chi) \St_{\bG_0} = \Ind_{\bk_n^\times}^{\GL_n(\bk)}(\chi)
\end{displaymath}
where~$\St_{\bG_0}$ is the Steinberg character and the induction is taken with respect to the embedding of~$\bk_n^\times$ in~$\GL_n(\bk)$ corresponding to some $\bk$-basis of~$\bk_n$. By~\cite[9.3~Corollary]{DMbook}, the Steinberg character vanishes away from semisimple classes, and if~$x$ is a semisimple element of~$\GL_n(\bk)$ then
\begin{displaymath}
\St_{\bG_0}(x) = \epsilon_{\bG_0}\epsilon_{Z_{\bG}^+(x)}|Z_{\bG^F}^+(x)|_p
\end{displaymath}
where~$Z_{\bG}^+(x)$ is the centralizer of~$x$, a connected reductive group over~$\bk$. 

Hence $R_w(\chi)(x) = 0$ if~$x$ is a semisimple element with no conjugates in~$\bk_n^\times$. When $x \in \bk_n^\times$, we compute the character of the induction as
\begin{align*}
\Ind_{\bk_n^\times}^{\GL_n(\bk)}(\chi)(x) = |Z_{\bG^F}^+(x)||\bk_n^\times|^{-1}\sum_{\gamma \in \Gal(\bk_{\deg(x)}/\bk)}\chi(\gamma x)
\end{align*}
since the $\bG^F$-conjugates of~$x$ in~$\bk_n^\times$ are precisely its Galois conjugates. The centralizer is isomorphic to~$\GL_{n/\deg(x)}(\bk_{\deg(x)})$. Then the claim follows since~$\Res_{\bk_{\deg(x)}/\bk}\bbG_m^{\times n/\deg(x)}$ is a quasisplit maximal torus in~$\Res_{\bk_{\deg(x)}/\bk}\GL_{n/\deg(x)}$ of rational rank~$\frac{n}{\deg(x)}$.
\end{proof}

\begin{rk}\label{DLinductiongeneralcoefficients}
For any field~$R$ of characteristic zero containing all roots of unity of order dividing the exponent of~$\GL_n(\bk)$ there exists a unique map~$\chi \mapsto (-1)^{n-1} R_w(\chi)$, from $R$-characters of~$\bk_n^\times$ to virtual $R$-representations of~$\GL_n(\bk)$, that satisfies the same character identity as~\cite[Theorem~4.2]{DLreps}.
It induces a bijection from regular $R$-characters to irreducible supercuspidal $R$-representations, which is already characterized by the formula in Proposition~\ref{DLinduction}, because of~\cite[Theorem~1.1]{SZcharacters}.
If~$\chi$ is an $R$-character of~$\GL_n(\bk)$, we will sometimes abuse notation and refer to~$(-1)^{n-1} R_w(\chi)$ as the Deligne--Lusztig induction of~$\chi$, even if strictly speaking we are not repeating the same construction using cohomology with $R$-coefficients.
(As a special case, this applies to~$R = \cbQ_p$.)
\end{rk}

\paragraph{Unipotent characters.}
Let~$\chi: W(\bT) \to \cbQ_l$ be the character of an irreducible representation. 
By~\cite[Theorem~15.8]{DMbook}, the unipotent characters of~$\bG^F$ are the functions
\begin{displaymath}
A_\chi = |W(\bT)|^{-1}\sum_{w \in W(\bT)} \chi(w)R_w(1_{\bT^{wF}})
\end{displaymath}
for varying~$\chi$. Notice that the maps~$R_{w_i}$ for $w_2 = ww_1w^{-1}$ are intertwined by the isomorphism $\ad(\dot{w}) : \bT^{w_1 F} \to \bT^{w_2 F}$, since for an arbitrary $F$-stable maximal torus~$\bT$ the map~$R_{\bT \subset \bB}^\bG$ does not depend on the choice of Borel subgroup containing~$\bT$ (see~\cite[Corollary~11.15]{DMbook}). By orthogonality of Deligne--Lusztig characters we deduce that
\begin{equation}\label{unipotentdevelopment}
(R_w(1_{\bT^{wF}}), A_\chi)_{\bG^F} = \chi(w),
\end{equation}
and so
\begin{displaymath}
R_w(1_{\bT^{wF}}) = \sum_{\chi \in \Irr(W(\bT))}\chi(w)A_\chi
\end{displaymath}
since the unipotent characters form an orthonormal family. 
Since~$R_{\id_W}$ coincides with the parabolic induction from~$\bT^F$, we see that the unipotent characters are the characters of the irreducible representations with supercuspidal support~$n\cdot 1_{\bk^\times}$. Further, by \cite[Proposition~12.13]{DMbook} we have that~$A_{\triv}$ is the trivial character of~$\bG^F$. 
It follows from our discussion of Harish-Chandra series that $\sigma_{\fP_{\min}}(1) = \St(1, n)$ is the only other factor of~$R_\id(\triv)$ with multiplicity one. 
This is~$A_{\sgn}$, where~$\sgn: W(\bT) \to \cbQ_l^\times$ is the sign character. 

\paragraph{Lusztig series.} Recall that two pairs $(\bT_i, \theta_i)$ consisting of a rational maximal torus in~$\bG$ and a character of~$\bT_i^F$ are said to be \emph{geometrically conjugate} if there exists $g \in \bG(\cbk)$ such that $\bT_2 = \ad(g)\bT_1$ and, for all~$n$ such that $F^n(g) = g$, we have 
\[
\theta_1 \circ N_{\bk_n/\bk} = \theta_2 \circ N_{\bk_n/\bk} \circ \ad(g).
\]
Here, the norm of an $F$-stable torus~$\bS$ is defined to be the morphism 
\[
N_{\bk_n/\bk}: \bS \to \bS, t \mapsto tF(t)\cdots F^{n-1}(t), 
\]
and we are asking for equality to hold on~$\bT_i^{F^n}$. 
For~$w \in S_n$, we write~$N_w$ for the $\bk_n/\bk$-norm of the diagonal torus with Frobenius endomorphism~$wF$.

By Langlands duality, one can construct a bijection between geometric conjugacy classes of pairs~$(\bS, \theta)$ in~$\bG$ and semisimple conjugacy classes in~$\bG^F \cong \GL_n(\bk)$.
The details are in~\cite[Chapter~13]{DMbook}.
Here we just remark that the construction depends on a choice of norm-compatible generators~$\zeta_n$ of every~$\bk_n^\times$, and an embedding $\cbk^\times \to \cbQ_l^\times$.

\begin{example}\label{classes} The geometric conjugacy class of a pair~$(\bS, \theta)$ such that~$\bS$ has type $w = (1, 2, \ldots, n)$ corresponds to the semisimple conjugacy class in~$\GL_n(\bk)$ whose characteristic polynomial is a power of the minimal polynomial of~$\theta(\zeta_n)$ over~$\bk$.
\end{example}

By~\cite[Proposition~13.3, Theorem~14.51]{DMbook}, two virtual characters $R_{w_i}(\theta_i)$ admit a common constituent if and only if the $(\bT_{w_i}, \theta_i)$ are geometrically conjugate; and furthermore, by~\cite[Proposition~13.1]{DMbook}, every irreducible character of~$\bG^F$ is a constituent of some~$R_w(\theta)$. It follows that the geometric conjugacy classes partition the set of irreducible characters of~$\bG^F$. 
An equivalence class in this partition is called the \emph{Lusztig series} of the corresponding semisimple conjugacy class~$s$ in~$\bG^F$, and it is denoted~$\mE(\bG^F, s)$. 
The unipotent characters form the Lusztig series $\mE(\bG^F, [1])$. 
We record the following theorem, which implies that in certain cases Lusztig induction preserves irreducibility. 
We will apply it in the next paragraph.

\begin{thm}\cite[Theorem~13.25]{DMbook}\label{Lusztigseries}
Let~$s$ be a semisimple element of~$\GL_n(\bk)$, and let~$\bL$ be a rational Levi subgroup of~$\bG$ containing the centralizer $Z_{\bG}(s)$. Then the map $\epsilon_{\bG}\epsilon_{\bL}R_{\bL}^{\bG}$ (taken with respect to any parabolic~$\bP$ with Levi factor~$\bL$) induces a bijection $\mE(\bL^F, [s]_{\bL^F}) \to \mE(\bG^F, [s]_{\bG^F})$.
\end{thm}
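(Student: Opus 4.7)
The plan is to combine three classical inputs: transitivity of Lusztig induction, the Mackey formula, and Jordan decomposition of characters. First I would observe that by proposition~11.5 of~\cite{DMbook}, $R_\bL^\bG \circ R_\bT^\bL = R_\bT^\bG$ for any rational maximal torus $\bT \subset \bL$. Combined with the characterization of $\mE(\bH^F, [t])$ as the set of irreducibles occurring in some $R_\bT^\bH(\theta)$ with $(\bT, \theta)$ corresponding to $[t]$ under Langlands duality, this shows that $R_\bL^\bG$ carries the $\bZ$-linear span of $\mE(\bL^F, [s]_{\bL^F})$ into that of $\mE(\bG^F, [s]_{\bG^F})$.

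Next I would apply the Mackey formula for Deligne--Lusztig induction (theorem~11.13 of~\cite{DMbook}) to compute $\langle R_\bL^\bG(\chi), R_\bL^\bG(\chi')\rangle_{\bG^F}$ for $\chi, \chi' \in \mE(\bL^F, [s]_{\bL^F})$. This formula expresses the inner product as a sum, over certain double cosets $w$ representing intersections of $\bL$ with its conjugates, of inner products of twisted characters on $(\bL \cap {}^w\bL)^F$. A nontrivial double coset can contribute only if some $\bG^F$-conjugate of $s$ lies simultaneously in $\bL$ and in ${}^w\bL$; under the hypothesis $Z_\bG(s) \subset \bL$, together with the connectedness of centralizers in $\GL_n$, this forces $w \in \bL^F$. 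Hence $\epsilon_\bG \epsilon_\bL R_\bL^\bG$ is an isometry on the $\bZ$-span of $\mE(\bL^F, [s]_{\bL^F})$.

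An isometric $\bZ$-linear map between sets of virtual characters must send each irreducible character to $\pm 1$ times an irreducible character. The sign $\epsilon_\bG \epsilon_\bL$ is calibrated so that the image is a genuine character, which can be verified by evaluating degrees on the trivial character of $\bL^F$ and reducing to the rank formula for tori. We thereby obtain an injection $\mE(\bL^F, [s]_{\bL^F}) \hookrightarrow \mE(\bG^F, [s]_{\bG^F})$. For surjectivity, I would exploit the fact that $\GL_n$ has connected center, so by Jordan decomposition of characters each Lusztig series $\mE(\bH^F, [t])$ is in canonical bijection with the set of unipotent characters of $Z_\bH(t)^F$. Since the hypothesis gives $Z_\bL(s) = Z_\bG(s) \cap \bL = Z_\bG(s)$, both Lusztig series have the same cardinality, and the injection is forced to be a bijection.

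The main obstacle will be controlling the Mackey formula: verifying that only the trivial double coset contributes requires a careful analysis of when Deligne--Lusztig virtual characters attached to different rational maximal tori share a common constituent, and of how the $Z_\bG(s) \subset \bL$ hypothesis rules out Weyl group elements that would intertwine distinct Lusztig series. This is the technical heart of Lusztig's argument, and for the present purposes I would simply cite~\cite{DMbook} rather than reprove it.
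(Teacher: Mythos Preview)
The paper does not prove this theorem at all: it is stated with the attribution ``See theorem~13.25 in~\cite{DMbook}'' and then used as a black box in the proof of theorem~\ref{sigmamax}. There is therefore no proof in the paper to compare your proposal against.

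Your sketch is a reasonable outline of the standard argument one finds in the cited reference. The ingredients you name---transitivity of Lusztig induction to show the map respects Lusztig series, the Mackey formula to establish that $\epsilon_{\bG}\epsilon_{\bL}R_{\bL}^{\bG}$ is an isometry on the span of $\mE(\bL^F,[s]_{\bL^F})$, and a cardinality count via Jordan decomposition of characters---are indeed the main steps. A couple of remarks: your treatment of the sign (``evaluating degrees on the trivial character of~$\bL^F$'') is rather vague; in practice one checks positivity of the degree of $\epsilon_{\bG}\epsilon_{\bL}R_{\bL}^{\bG}(\chi)$ via the dimension formula for Lusztig induction. Also, your invocation of the Mackey formula is the genuinely delicate point, and you are right to flag it; for $\GL_n$ it is known unconditionally, but you should be aware that the general Mackey formula for Lusztig induction was for a long time open and the version in~\cite{DMbook} comes with hypotheses. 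Since the paper only needs the $\GL_n$ case and simply cites the result, none of this matters for the paper's purposes, and your proposal would serve as an adequate expansion of the citation if one were desired.
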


\paragraph{Virtual representations.} 
Let~$m$ be a positive divisor of~$n$ and let~$\pi_0$ be an irreducible supercuspidal representation of~$\GL_m(\bk)$. 
Since the matrix of Kostka numbers is upper unitriangular, it follows from the structure of the~$\pi_{\fP}(\pi_0)$ that they form a basis for the Grothendieck group of finite length representations of~$\GL_n(\bk)$ all of whose factors have supercuspidal support~$(n/m)\pi_0$. 
Then for any partition~$\fP$ of~$n/m$ there exists an element~$\sigma^+_{\fP}(\pi_0)$ of this Grothendieck group such that 
\begin{align*}
(\sigma^+_{\fP}(\pi_0), \pi_{\fP'}(\pi_0))_{\GL_n(\bk)} & = 1 \; \text{if} \; \fP = \fP' \\
& = 0 \; \text{otherwise}.
\end{align*}
We now give an explicit construction of~$\sigma^+_{\fP_{\min}}(\pi_0)$ in terms of Deligne--Lusztig theory.

\begin{thm}\label{sigmamax}
Let~$w$ be an $n$-cycle in~$S_n$, let~$w_m$ be an $m$-cycle in~$S_m$, and assume~$\pi_0 \cong (-1)^{m+1} R_{w_m}(\chi)$ for a $\bk$-regular character $\theta_0 : \bk_m^\times \to \cbQ_l^\times$. 
Let~$\theta = N_{\bk_n/\bk_m}^*(\theta_0)$. 
Then 
\[
\sigma^+_{\fP_{\min}}(\pi_0) \cong (-1)^{n+1}R_w(\theta).
\]
\end{thm}
\begin{proof}
First observe that $R_w(\theta)$ is orthogonal to each of the~$\pi_{\fP}(\pi_0)$ for~$\fP \not = \fP_{\min}$, because these are full parabolic inductions, and the torus~$\bT_w$ has no conjugates in any proper split Levi subgroup of~$\bG$.
So we have to prove that all irreducible constituents of~$R_w(\theta)$ have supercuspidal support $(n/m)\pi_0$, and that
\[
(R_w(\theta), \St(\pi_0, n/m))_{\bG^F} = (-1)^{n+1},
\]
since $\pi_{\fP_{\min}}(\pi_0) = \sigma_{\fP_{\min}}(\pi_0) = \St(\pi_0, n/m)$.

Write~$z = \theta_0(\zeta_m)$, so that the geometric conjugacy class of~$(\bT_w, \theta)$ corresponds to the minimal polynomial of~$z$ over~$\bk$ (a degree~$m$ polynomial) to the $n/m$-th power, as in Example~\ref{classes}. 
The centralizer in~$\GL_n(\bk)$ of any rational element in this conjugacy class is isomorphic to~$\GL_{n/m}(\bk_m)$, and it is the group of rational points of a Levi subgroup~$\bL_0$ of~$\bG_0$, isomorphic to $\Res_{\bk_m/\bk}\GL_{n/m, \bk_m}$. 
By the discussion preceding Lemma~\ref{Weilrestriction}, the conjugacy classes of rational maximal tori in~$\bL_0 \times_{\bk} \cbk$ are in bijection with those in~$\GL_{n/m, \bk_m} \times_{\bk_m} \cbk$. 
Under this bijection, the torus~$\bT_w$ has type corresponding to the $n/m$-cycles, which we write as~$w_{n/m}$.

By Lemma~\ref{Weilrestriction}, the unipotent characters~$\mE(\bL^F, [1])$ coincide with the unipotent characters of~$\GL_{n/m}(\bk_m)$ viewed as the group of $\bk_m$-points of~$\GL_{n/m, \bk_m}$.
Hence they are parametrized by~$\chi \in \Irr(S_{n/m})$ as in our previous discussion: we write~$\chi \mapsto A_\chi$ for this parametrization.

\begin{lemma}\label{explicitLusztigseries}
The Lusztig series of the geometric conjugacy class of~$(\bT_w, \theta)$ is 
\[
\{(-1)^{n+n/m}R_\bL^\bG(\theta_0 A_\chi): \chi \in \Irr(S_{n/m})\}.
\]
\end{lemma}
\begin{proof}
The character~$\theta_0$ can be inflated to~$\GL_{n/m}(\bk_m)$ via the determinant, and its restriction to~$\bk_n^\times$ is~$\theta$.
By~\cite[Proposition~13.30]{DMbook}, the Lusztig series of~$\bL^F$ attached to the geometric conjugacy class of~$(\bT_w, \theta)$ consists of the twists by~$\theta$ of the unipotent characters of~$\bL^F$.
Then the lemma follows from Theorem~\ref{Lusztigseries}.
\end{proof}

Theorem~\ref{sigmamax} will follow from Lemma~\ref{explicitLusztigseries}, Lemma~\ref{twoseries}, and the following two equations:
\begin{align}
R_w(\theta) = \sum_{\chi \in \Irr(S_{n/m})} \chi(w_{n/m}) R_\bL^\bG(\theta_0 A_\chi)\label{firsttoprove}\\
\St(\pi_0, n/m) = (-1)^{n + n/m} R_\bL^\bG(\theta_0 A_\sgn)\label{secondtoprove}.
\end{align}
Indeed, they imply that
\begin{align*}
(R_w(\theta), \sigma_{\fP_{\min}}(\pi_0))_{\bG^F}  &= \left ( \sum_{\chi \in \Irr(S_{n/m})} \chi(w_{n/m}) R_{\bL}^{\bG}(\theta_0 A_\chi), (-1)^{n+n/m}R_{\bL}^{\bG}(\theta_0 A_{\sgn}) \right )_{\bG^F}\\ 
&= (-1)^{n+n/m}\sgn(w_{n/m}) = (-1)^{n+1}
\end{align*}
since~$\sgn(w_{n/m}) = (-1)^{n/m+1}$.

\begin{proof}[Proof of Equation~(\ref{firsttoprove})]
Transitivity of Lusztig induction (see~\cite[11.5]{DMbook}) implies that $R_w(\theta) = R_{\bL}^{\bG}(R_{\bT_w}^{\bL}(\theta))$, where we have chosen an arbitrary parabolic subgroup~$\bP \subseteq \bG$ with Levi factor~$\bL = \bL_0 \times_{\bk} \cbk$. 
By~\cite[Corollary~1.27]{DLreps}, we have an equality $R_{\bT_w}^{\bL}(\theta) = \theta_0 R_{\bT_w}^{\bL}(1_{\bT_w})$. 
By Lemma~\ref{Weilrestriction}, the functor~$R_{\bT_w}^\bL$ coincides with~$R_{w_{n/m}}$ taken with respect to~$\GL_{n/m, \bk_m}$.
But we have seen in~(\ref{unipotentdevelopment}) that 
\begin{equation}\label{trivialcharactertwist}
R_{w_{n/m}}(1) = \sum_{\chi \in \Irr(S_{n/m})} \chi(w_{n/m}) A_\chi.
\end{equation}
\end{proof}

\begin{lemma}\label{twoseries}
The Lusztig series of~$(\bT_w, \theta)$ coincides with the Harish-Chandra series of~$n/m \cdot \pi_0$.
\end{lemma}
\begin{proof}
Let~$w_{m, n/m} \in S_n$ be the product of~$n/m$ disjoint $m$-cycles and let~$w_m \in S_m$ be an $m$-cycle.
Then the group of rational points of the torus~$\bT_{w_{m, n/m}}$ is isomorphic to $(\bk_m)^{\times n/m}$.
We are going to prove the lemma by computing
\[
(-1)^{n + n/m}R_{\bT_{w_{m, n/m}}}^{\bG}(\theta_0^{\otimes n/m})
\]
in two ways.
The first one is based on the fact that~$\bT_{w_{m, n/m}}$ is a rational maximal torus in~$\bL$: notice that $\bT_{w_{m, n/m}}(\bk)$ is the diagonal torus of~$\GL_{n/m}(\bk_m)$. 
Then similarly to~(\ref{trivialcharactertwist}), we have
\begin{equation}\label{trivialcharactersplit}
(-1)^{n + n/m}R_{\bT_{w_{m, n/m}}}^{\bG}(\theta_0^{\otimes n/m}) = (-1)^{n + n/m}R_{\bL}^{\bG}(\theta_0 R_\id(1)) = (-1)^{n + n/m}\sum_{\chi \in \Irr(S_{n/m})} \chi(\id) R_{\bL}^{\bG}(\theta_0 A_\chi)
\end{equation}
and so, by Lemma~\ref{explicitLusztigseries}, the constituents of this character coincide with the Lusztig series of~$(\bT_w, \theta)$

On the other hand, let $\bM = \GL_{m, \bk}^{\times n/m}$, a split Levi subgroup of~$\bG$.
Notice that~$\bT_{w_{m, n/m}}$ is a maximal torus in~$\bM$ (indeed, $\bk_m$ is a maximal torus in~$\GL_m(\bk)$).
By transitivity, $R_{\bT_{w_{m, n/m}}}^{\bG}(\theta_0^{\otimes n/m})$ is the character of the parabolic induction of~$R_{\bT_{w_{m, n/m}}}^{\bM}(\theta_0^{\otimes n/m})$, because Lusztig induction from a split Levi subgroup coincides with parabolic induction~\cite[11.1]{DMbook}. 
Now we can apply Lemma~\ref{products} to find that~$R_{\bT_{w_{m, n/m}}}^{\bM}(\theta_0^{\otimes n/m})$ equals~$R_{\bT_{w_m}}^{\GL_{m, \bk}}(\theta_0)^{\otimes n/m}$. Finally, we deduce that
\begin{align*}
(-1)^{n+n/m}R_{\bT_{w_{m, n/m}}}^{\bG}(\theta_0^{\otimes n/m}) = \operatorname{PInd}_{\bM^F}^{\bG^F}\left ( (-1)^{m+1} R_{w_m}(\theta_0) \right )^{\otimes n/m} & = \operatorname{PInd}_{\prod_{i=1}^{n/m}\GL_m(\bk)}^{\GL_n(\bk)}(\pi_0^{\otimes n/m}) \\
& = \pi_{\fP_{\max}}(\pi_0).
\end{align*}
Then the lemma follows from~(\ref{trivialcharactersplit}) and the fact that the Harish-Chandra series of~$n/m \cdot \pi_0$ coincides with the set of constituents of~$\pi_{\fP_{\max}}(\pi_0)$.
\end{proof}

\begin{proof}[Proof of Equation~(\ref{secondtoprove})]
The character~$A_{\sgn}$ is the Steinberg character of~$\GL_{n/m}(\bk_m)$, and the Lusztig induction of a nondegenerate character is nondegenerate, by~\cite[Th{\'e}or{\`e}me~4.4]{DMLusztigGreen} (the nondegenerate irreducible characters are the constituents of a Gelfand--Graev representation). 
Since nondegeneracy is preserved under twisting by one-dimensional characters (because unipotent elements have determinant one), we see that $(-1)^{n+n/m} R_{\bL}^{\bG}(\theta_0 A_{\sgn})$ is the character of a nondegenerate representation in the Lusztig series of~$(\bT_w, \theta)$.
By Lemma~\ref{twoseries}, this representation is~$\St(\pi_0, n/m)$.
\end{proof}

This completes the proof of Theorem~\ref{sigmamax}.
\end{proof}

\section{Type theory.}\label{typetheory}
In this section we recall the structure of maximal simple types for the inner forms of~$\GL_n(F)$ and the results of Schneider and Zink about $\bK$-types. We establish their analogues for types on the maximal compact subgroup of~$D^\times$. We then prove some formulas for the trace of a $\bK$-type in terms of its level zero part and its base change to unramified extensions, and begin studying their behaviour under the Jacquet--Langlands correspondence. From now on, whenever dealing with a finite general linear group~$\GL_n(\bF_q)$ we will write~$R_w(-)$ for the Deligne--Lusztig induction from a maximal torus whose type consists of the $n$-cycles. 
If~$\chi$ is an $\bF_q$-regular character of~$\bF_{q^n}^\times$ then $(-1)^{n-1}R_w(\chi)$ is an irreducible cuspidal representation of~$\GL_n(\bF_q)$.
(See Remark~\ref{DLinductiongeneralcoefficients}.)
We will also fix the maximal compact subgroup $\bK = \GL_n(\mO_F)$ of~$\GL_n(F)$ for the rest of the paper.

\paragraph{Maximal simple types.} In this paragraph we let~$G = \GL_m(D)$ be an inner form of~$\GL_n(F)$, for~$D$ a central division algebra over~$F$ of reduced degree~$d$. We write~$A = M_m(D)$. 
We summarize the parametrization of simple inertial classes of representations of~$G$ from the point of view of~\cite{InertialJL}, which builds upon the work of Bushnell--Kutzko~\cite{BKbook} and Broussous, S\'echerre and Stevens in a series of papers (see for instance \cite{BSSV} and~\cite{SecherreStevensJL}). 
Recall that the simple inertial classes of representations of~$G$ are those whose supercuspidal support is inertially equivalent to~$r\pi_0$ for some positive divisor $r|m$ and some supercuspidal representation~$\pi_0$ of~$\GL_{m/r}(D)$. 

The supercuspidal Bernstein components of~$G$ admit types constructed as follows. 
One starts with a \emph{maximal simple character}, which is a character of a compact open subgroup~$H^1_\theta$ of~$G$. 
As~$m$ and~$D$ vary, the maximal simple characters of~$\GL_m(D)$ can be classified according to their endo-class (usually denoted~$\Theta_F$). 
Two maximal simple characters in~$G$ have the same endo-class if and only if they are $G$-conjugate. 

Attached to a maximal simple character~$\theta$ there are subgroups $H^1_\theta \subseteq J^1_\theta \subseteq J_\theta$, of~$G$, each normal in the next. 
There corresponds to~$\theta$ an irreducible representation~$\eta_\theta$ of~$J^1_\theta$, whose restriction to~$H^1_\theta$ is a multiple of~$\theta$. 
One can extend~$\eta_\theta$ to a representation of~$J_\theta$, and a class of \emph{$\beta$-extensions} is singled out. 
They are all twists of each other by characters of~$J_\theta/J^1_\theta$, which is a finite general linear group.
There exists a unique $\beta$-extension~$\kappa_p$, called the $p$-primary $\beta$-extension, with the property that the order of the character $\det(\kappa_p)$ is a power of~$p$.
However, the main result of~\cite{InertialJL} suggests that we work instead with a certain quadratic twist~$\kappa_\theta = \epsilon^1_\theta \kappa_p$. 
The character~$\epsilon^1_\theta$ is one of the ``symplectic invariants'' of~$\theta$, for which see~\cite[Propositions 2.11, 2.13]{InertialJL} and the references therein (note that~$\epsilon^1_\theta$ is denoted~$\epsilon^1(-, V_\theta)$ in~\cite{InertialJL}).

To go further in the construction, let $\theta$ be a maximal simple character with endo-class $\cl(\theta) = \Theta_F$.
We identify the group~$J_\theta/J^1_\theta$ with a certain finite general linear group. 
As in \cite{InertialJL}, we let $E/F$ be the unramified parameter field of~$\Theta_F$ in~$\overline{F}$. If~$[\fA, \beta]$ is a simple stratum for~$\theta$, and $Z_A(F[\beta]) \cong M_{m'}(D')$ for a central division algebra~$D'$ over~$F[\beta]$ of reduced degree~$d'$, then $m'd' = n/\delta(\Theta_F)$, and~$J_\theta/J^1_\theta \cong \GL_{m'}(\be_{d'})$ (we recall that $\be_{d'}$ is the residue field of the unramified extension of~$E$ in~$\overline{F}$ of degree~$d'$). 
More precisely, in~\cite[Section~3.1]{InertialJL} there is constructed an injection from the set of lifts of~$\Theta_F$ to an endo-class~$\Theta_E$ over~$E$ to the set of conjugacy classes of isomorphisms
\begin{displaymath}
J_{\theta} / J^1_{\theta} \to \GL_{m'}(\be_{d'})
\end{displaymath}
under the group $\GL_{m'}(\be_{d'}) \rtimes \Gal(\be_{d'}/\be)$.
(The notion of lift of an endo-class over~$F$ to a finite tamely ramified extension of~$F$ is defined in~\cite[Section~9]{BHliftingI}, see especially~\cite[Corollary~9.13]{BHliftingI}.)

Let $\chi: \be_{n/\delta(\Theta_F)}^\times \to \bC^\times$ be a $\be_{d'}$-regular character, that is to say a character with trivial stabilizer in $\Gal(\be_{n/\delta(\Theta_F)}/\be_{d'})$. Then the Deligne--Lusztig induction~$(-1)^{m'+1}R_{w}(\chi)$ is a supercuspidal representation of~$\GL_{m'}(\be_{d'})$, depending only on the $\Gal(\be_{n/\delta(\Theta_F)}/\be_{d'})$-conjugacy class of~$\chi$. The representation $(J_\theta, \kappa_\theta \otimes (-1)^{m'+1}R_{w}(\chi))$ is a type for a supercuspidal Bernstein component of~$G$, and all such components admit types of this kind. Furthermore, two types $(J_\theta, \kappa_\theta \otimes (-1)^{m'+1}R_{w}(\chi_i))$ determine the same component if and only if the~$\chi_i$ are conjugate under $\Gal(\be_{n/\delta(\Theta_F)}/\be)$.

The choice of~$\Theta_E \to \Theta_F$ and of the $\beta$-extension~$\kappa_\theta$ determines a \emph{level zero map},
denoted~$\Lambda(-, \Theta_E, \kappa_\theta)$ in~\cite{InertialJL}.
To shorten notation we will denote it~$\Lambda_{\kappa_\theta}(-)$ or simply~$\Lambda$, since most of the time we will be working with~$\kappa_\theta$ and with a fixed lift~$\Theta_E \to \Theta_F$.
It goes from the set of irreducible smooth representations of~$\GL_m(D)$ whose supercuspidal support is simple of endo-class~$\Theta_F$ to the set of $\Gal(\be_{n/\delta(\Theta_F)} / \be)$-orbits of characters of~$\be_{n/\delta(\Theta_F)}^\times$. 
It only depends on the inertial class of a representation, and it sends a supercuspidal representation~$\pi$ to the orbit~$[\chi]$ determined by the maximal simple types it contains. 
To describe its effect on simple, nonsupercuspidal representations, let~$\pi$ be an irreducible representation with supercuspidal support $r\pi_0$. 
Recall from~\cite{MStypes, InertialJL} that there exists a unique conjugacy class of maximal $\beta$-extensions in~$\GL_{m/r}(D)$ that is \emph{compatible} with~$\kappa_\theta$ in the sense explained in these references.
We denote it~$\kappa_\theta^0$.
Then~$\Lambda(\pi, \Theta_E, \kappa_\theta)$ is the inflation to~$\be_{n/\delta(\Theta_F)}^\times$ of $\Lambda(\pi_0, \Theta_E, \kappa_\theta^0)$.

In summary, if we fix a lift~$\Theta_E \to \Theta_F$ for all endo-classes~$\Theta_F$ over~$F$ we find a bijection from the set of simple inertial classes of~$G$ to the set of pairs $(\Theta_F, [\chi])$ consisting of an endo-class over~$F$ of degree dividing~$n$ and an orbit of~$\Gal(\be_{n/\delta(\Theta_F)}/\be)$ on the set of characters of~$\be_{n/\delta(\Theta_F)}^\times$. 

\begin{rk}
We treat the case of level zero representations by letting~$J_\theta$ be a maximal compact subgroup with principal congruence subgroup~$J^1_\theta$, letting $\kappa_\theta = 1$, and working with an arbitrary choice of isomorphism $J_\theta/J^1_\theta \to \GL_m(\mbf_d)$ extending to an $\mbf$-algebra isomorphism. This introduces no ambiguity as here $\be = \mbf$ and so the action of $\Gal(\mbf_d/\mbf)$ does not change the inertial class.
\end{rk}

Finally, we recall a special case of the interior lifting construction of~\cite{BHliftingI} and~\cite{BSSV}. 
If~$\theta$ is a maximal simple character in~$\GL_m(D)$ and~$[\fA, \beta]$ is a simple stratum for~$\theta$, there exists a maximal unramified extension~$K^+/F[\beta]$ in~$Z_A(F[\beta])$ normalizing the order~$\fA$, as in the proof of~\cite[Proposition~2.5]{InertialJL}.
Let~$L$ be any unramified extension of~$F$ in~$K^+$. 
As in~\cite[Proposition~2.8, Lemma~2.12]{InertialJL}, the restriction $\theta_L = \theta|H^1_\theta \cap Z_A(L)$ is a maximal simple character with corresponding groups~$H^1_{\theta_L} = H^1_\theta \cap Z_A(L)$ and~$J^i_{\theta_L} = J^i_\theta \cap Z_A(L)$. 
The character~$\theta_L$ is called the interior lift of~$\theta$ to~$L$.

\paragraph{$\bK$-types for~$\GL_n(F)$.} 
Let~$A = M_n(F)$ and~$G = A^\times = \GL_n(F)$. 
We recall some results from~\cite{SZtypes} and translate them in the form we will need later on. 
Let $F[\beta]$ be a field extension of~$F$ in~$A = M_n(F)$, and let~$B = Z_A(F[\beta])$. 
Choose a pair $\fB_{\min} \subseteq \fB_{\max}$ of hereditary $\mO_{F[\beta]}$-orders in~$B$, such that~$\fB_{\min}$ is minimal and~$\fB_{\max}$ is maximal. 
Recall that hereditary $\mO_{F[\beta]}$-orders $\fB$ in~$B$ are in bijection with $\mO_{F[\beta]}$-lattice chains in~$V = F^n$ viewed as an $F[\beta]$-vector space via the inclusion $F[\beta] \subset A$. 
Since these are also $\mO_F$-lattice chains, there corresponds to~$\fB$ a unique hereditary $\mO_F$-order $\fA = \fA(\fB)$ of~$A$, called the continuation of~$\fB$ to~$A$.
It satisfies $\fA(\fB) \cap B = \fB$. 
Following~\cite[Section~5]{SZtypes}, we associate to~$\fB$ a subgroup $J = J(\fB) = J(\beta, \fA(\fB))$ of the unit group~$\fA(\fB)^\times$ such that $J = J^1\fB^\times$ for~$J^1 = J^1(\fB) = J \cap U^1(\fA(\fB))$. 
We write~$J_{\max}$ and~$J^1_{\max}$ for the groups corresponding to~$\fB_{\max}$.

\begin{rk}
From now on, we make the assumption that the group~$J_{\max}$ is contained in our fixed maximal compact subgroup~$\bK$. 
This can always be achieved after possibly replacing~$F[\beta]$ with a conjugate.
\end{rk}

We let~$\theta$ be a simple character of the stratum~$[\fA_{\max}, \beta]$ (so $\theta$ is  maximal) and write $J_{\max} = J_\theta$ and~$J^1_{\max} = J^1_\theta$.
We write~$\kappamax$ or $\kappa(\fB_{\max})$ for the $\beta$-extension~$\kappa_\theta$ (the paper~\cite{SZtypes} works with an arbitrary $\beta$-extension). 
There is a corresponding family of representations~$\kappa(\fB)$ of~$J(\beta)$, one for each for any hereditary $\mO_{F[\beta]}$-order $\fB_{\min} \subseteq \fB \subseteq \fB_{\max}$, satisfying a coherence property as in~\cite[Lemma~5.1]{SZtypes}.

Writing~$\Theta_F$ for the endo-class of~$\theta$, and~$E/F$ for the unramified parameter field of~$\Theta_F$ in~$\overline{F}$, we have attached an inner conjugacy class of isomorphisms
\begin{displaymath}
J_{\max} / J^1_{\max} \to \GL_{n/\delta(\Theta_F)}(\be)
\end{displaymath}
to every lift of~$\Theta_F$ to an endo-class~$\Theta_E$ over~$E$.
We have previously fixed such a lift $\Theta_E \to \Theta_F$, and we now let~$\psi$ be a representative of the corresponding conjugacy class such that~$\psi$ identifies $\fB_{\min}^\times J^1_{\max} / J^1_{\max}$ with the upper-triangular Borel subgroup (compare the discussion after~\cite[Lemma~5.5]{SZtypes}).

There is a functor~$V \mapsto V(\kappamax) = \Hom_{J^1_{\max}}(\kappamax, V)$, from the category of smooth representations of~$\GL_n(F)$ to the category of representations of~$J_{\max}/J^1_{\max}$, sending admissible representations to finite-dimensional ones, which we denoted~$\bK_{\kappamax}$ in~\cite{InertialJL}. 
We will compose it with our isomorphism~$\psi$, and denote the resulting functor still by~$V \mapsto V(\kappamax)$.

For any positive divisor~$r$ of~$n/\delta(\Theta_F)$ we have a standard parabolic subgroup of~$\GL_{n/\delta(\Theta_F)}(\be)$, with Levi factor isomorphic to~$\prod_{i=1}^r \GL_{n/r\delta(\Theta_F)}(\be)$, and consisting of block upper triangular matrices. 
It coincides with the image under~$\psi$ of~$\fB^\times J^1_{\max}/J^1_{\max}$ for some principal order $\fB_{\min} \subseteq \fB \subseteq \fB_{\max}$ that we fix. 
If~$\sigma_0$ is a cuspidal representation of~$\GL_{n/r\delta(\Theta_F)}(\be)$ attached to the character~$\chi$ of~$\be_{n/r\delta(\Theta_F)}^\times$, and~$\sigma = \sigma_0^{\otimes r}$ is inflated to~$J(\fB)/J^1(\fB)$, then the pair~$(J(\fB), \kappa(\fB) \otimes \sigma)$ is a simple type in~$\GL_n(F)$. 
It is a maximal simple type precisely when~$r=1$. 
The next lemma connects this construction with our parametrization of simple inertial classes.

\begin{lemma}\label{findclass}
Let~$\fs$ be the simple inertial class with invariants~$\cl(\fs) = \Theta_F$ and~$\Lambda(\fs, \Theta_E, \kappamax) = [\chi]$.
With the notation of the previous paragraph, the pair $(J(\fB), \kappa(\fB) \otimes \sigma)$ is a type for~$\fs$.
\end{lemma}
\begin{proof}
Let~$V$ be an irreducible simple representation of~$\GL_n(F)$ containing the simple type $(J(\fB), \kappa(\fB) \otimes \sigma)$.
Let the supercuspidal support of~$V$ be~$V_0^{\otimes s}$. 
Let~$\theta_0$ be a maximal simple character in~$\GL_{n/s}(F)$ of endo-class~$\Theta_F$. 
Let~$\kappamax^0$ be the $\beta$-extension of~$\theta_0$ compatible with~$\kappamax$. 
We need to prove that~$r = s$ and that $\Lambda(V_0, \Theta_E, \kappamax^0) = [\chi]$.

By definition of compatibility, the supercuspidal support of~$V(\kappamax)$ is a product of representations corresponding to $\Lambda(V_0, \Theta_E, \kappamax^0)$ under Deligne--Lusztig induction.
On the other hand, by~\cite[Proposition~5.3]{SZtypes}, the supercuspidal support of~$V(\kappamax)$ is~$\left [ \prod_{i=1}^r\GL_{n/r\delta(\Theta_F)}(\be), \sigma_0^{\otimes r} \right ]$. 
The lemma follows by uniqueness of supercuspidal support.
\end{proof}

Write~$\fs$ for the inertial class in Lemma~\ref{findclass}.
We are going to define two classes of virtual representations of~$\bK$ attached to~$\fs$, depending only on the maximal simple character~$\theta$.
Recall that we assume $J_\theta \subset \bK$.
If~$\fP$ is a partition of~$r$, the constructions of Section~\ref{finitegroups} provide us a representation $\kappa_{\max} \otimes \sigma_{\fP}(\sigma_0)$ of~$J_\theta$, via $J_\theta/J^1_\theta \cong \GL_{n/\delta(\Theta_F)}(\be)$.

\begin{defn}\label{definitionKtypes}
Write $\sigma_{\fP}(\fs) = \Ind_{J_\theta}^\bK(\kappamax \otimes \sigma_{\fP}(\sigma_0))$, which by the discussion at the end of~\cite[Section~5]{SZtypes} is an irreducible smooth representation of~$\bK$. 
Write~$\sigma_{\fP}^+(\fs)$ for the virtual representation~$\Ind_{J_\theta}^\bK(\kappamax \otimes \sigma_{\fP}^+(\sigma_0))$ of~$\bK$. 
We will refer to these representations as \emph{$\bK$-types} for~$\fs$.
\end{defn}

Next we show how the $\bK$-types provide a refinement of Bushnell--Kutzko type theory for generic representations.
Via the Bernstein--Zelevinsky classification, we can attach to each irreducible representation~$V \in \Irr(\fs)$ a partition~$\fP(V)$ of~$r$, in the following way.

\begin{defn}\label{partitionforV}
Let~$V \in \Irr(\fs)$. 
We define~$\fP(V)(i)$ to be the number of times a segment of length~$i$ appears in the multiset corresponding to~$V$.
We will sometimes shorten notation to~$\fP = \fP(V)$.
\end{defn}

\begin{pp}
Let~$V \in \Irr \GL_n(F)$ be generic. 
Let~$\fP$ be a partition of~$r$.
Then $\Hom_{J_\theta}(\kappa_{\max} \otimes \sigma_{\fP}(\sigma_0), V) \not = 0$ if and only if~$V \in \fs$ and its partition~$\fP(V)$ satisfies $\fP \leq \fP(V)$.
\end{pp}
\begin{proof}
By~\cite[Lemma~5.2]{SZtypes}, the nonvanishing implies that~$V \in \Irr(\fs)$.
By~\cite[Proposition~5.9]{SZtypes} we have that $V(\kappa_{\max}) \cong \pi_{\fP(V)}(\sigma_0)$ whenever~$V \in \Irr(\fs)$ is generic. 
Then the claim follows from the existence of an isomorphism
\begin{displaymath}
\Hom_{J_\theta}(\kappa_{\max} \otimes \sigma_{\fP}(\sigma_0), V) \isom \Hom_{\GL_{n/\delta(\Theta_F)}(\be)}(\sigma_{\fP}(\sigma_0), V(\kappa_{\max})).
\end{displaymath}
\end{proof}

\begin{rk}\label{nogenericityassumption}
By~\cite[Lemma~5.2]{SZtypes}, the fact that~$V \in \Irr(\fs)$ if $\Hom_{J_\theta}(\kappa_{\max} \otimes \sigma_{\fP}(\sigma_0), V) \not = 0$ holds for any $V \in \Irr \GL_n(F)$, with no genericity assumptions.
\end{rk}

\begin{example}\label{discreteseries}
Let~$V$ be irreducible and generic. We have $(\sigma^+_{\fP_{\min}}(\fs), V)_{\bK} \not = 0$ if and only if $V \in \Irr(\fs)$ and $V(\kappamax) \cong \pi_{\fP_{\min}}(\sigma_0)$, in which case it equals one. This happens if and only if~$\fP(V) = \fP_{\min}$, that is the multiset of~$V$ has only one segment (because~$\fP_{\min}$ is the partition with only one summand). Equivalently, $V$ is an essentially square-integrable representation in~$\fs$.
\end{example}

Finally, we remove the dependence of the $\bK$-types on~$\theta$.

\begin{pp}\label{thetaindependence}
The representations~$\sigma_{\fP}(\fs)$ and~$\sigma_{\fP}^+(\fs)$ are independent of the choice of~$\theta$.
\end{pp}
\begin{proof}
Let~$\theta_1$ and~$\theta_2$ be conjugate maximal simple characters in~$\GL_n(F)$, with $J_{\theta_i} \subseteq \bK$. 
The orders~$\fA_i$ attached to the~$\theta_i$ are principal orders with the same ramification, corresponding to lattice chains that contain the lattice chain defined by~$\bK$ (because~$\fA_i$ is the continuation of~$\fA_i \cap B_i$ and $(\fA_i \cap B_i)^\times \subseteq J_{\theta_i} \subseteq \bK$). 
Hence the~$\fA_i$ are $\bK$-conjugate. 
Since intertwining maximal simple characters defined on the same order are conjugate under the group of units of that order (see~\cite[Theorem~3.5.11]{BKbook}), we see that the~$\theta_i$ are conjugate under~$\bK$, hence so are the~$J_{\theta_i}$. 
Write $J_{\theta_2} = \ad(g)J_{\theta_1}$. 
Since the lift~$\Theta_E \to \Theta_F$ is fixed, by the proof of \cite[Proposition~3.9]{InertialJL} the inner conjugacy classes $[\psi_i]: J_{\theta_i}/J_{\theta_i}^1 \to \GL_{n/\delta(\Theta_F)}(\be)$ satisfy $[\psi_1] = \ad(g)^*[\psi_2]$. 
It follows that we get isomorphic representations when inducing.
\end{proof}

So there are well-defined representations $\sigma_{\fP}(\fs)$ and $\sigma_{\fP}^+(\fs)$ of~$\bK$ for every simple Bernstein component~$\fs$ of~$\GL_n(F)$. 
By Remark~\ref{nogenericityassumption}, these are \emph{typical} representations: each of them determines the Bernstein component of an irreducible representations of~$\GL_n(F)$ that contains it. 
We do not claim that these are the only typical representations of~$\bK$, although some variant of this statement (assuming~$p>n$) is expected to hold, as in~\cite[Conjecture~4.1.3]{EGBM}. 
This is closely related to the problem of ``uniqueness of types", for which see~\cite{Paskunasunicity} and~\cite[Annexe~A]{BMmultiplicites}.

\paragraph{$\bK$-types for~$D^\times$.} The group~$D^\times$ has a unique maximal compact subgroup~$\mO_D^\times$. Let~$(J_\theta, \lambda = \kappa_\theta \otimes \chi)$ be a maximal simple type in~$D^\times$, so that $J_{\theta} \subseteq \mO_D^\times$. Fix a simple stratum~$[\mO_D, \beta]$ for~$\theta$ and a uniformizer~$\pi_{D'}$ of the central division algebra $D' = Z_D(F[\beta])$ over~$F[\beta]$. Then the normalizer~$\bJ(\theta)$ of~$\theta$ in~$D^\times$ is~$\pi_{D'}^{\bZ} \ltimes J_\theta = (D')^\times J^1_\theta$, and the normalizer~$\bJ(\lambda)$ of~$\lambda$ in~$D^\times$ has index in~$\bJ(\theta)$ equal to the size~$b(\chi)$ of the orbit of~$\chi$ under~$\Gal(\be_{n/\delta(\Theta_F)}/\be)$ (see for instance~\cite[3.4]{MStypes}). 

By~\cite[Proposition~2.6.1]{BHJL}, the $D^\times$-intertwining set of~$(J_\theta, \lambda)$ coincides with~$\bJ(\lambda)$, which intersects~$\mO_D^\times$ in~$J_\theta$. It follows that the intertwining set of~$\lambda$ in~$\mO_D^\times$ is~$J_\theta$ and that $\Ind_{J_\theta}^{\mO_D^\times}\lambda$ is irreducible. By Frobenius reciprocity, it is a type for the Bernstein component corresponding to~$(J_\theta, \lambda)$. We will refer to these representations as \emph{$\bK$-types} for~$D^\times$.

Another construction of $\bK$-types in this context can be given as follows. A smooth irreducible representation~$\pi$ of~$D$ restricts to a semisimple representation of~$\mO_D^\times$, whose irreducible constituents form a unique orbit under conjugation by a uniformizer~$\Pi_D$ of~$D^\times$. By~\cite[Remark~1.6.1.3]{Rocheottawa}, each constituent occurs with multiplicity one. If~$\tau$ is another smooth irreducible representation of~$D^\times$, it follows that $\Hom_{\mO_D^\times}(\pi, \tau)$ is nonzero if and only if~$\pi|_{\mO_{D^\times}}$ and~$\tau|_{\mO_{D^\times}}$ are isomorphic, and this is equivalent to~$\pi$ and~$\tau$ being unramified twists of each other. It follows that any irreducible constituent of~$\pi|_{\mO_{D}^\times}$ is a type for the inertial class of~$\pi$. This is the construction used in~\cite{GGBM} when~$n = 2$.

In contrast with the case of~$\GL_n(F)$ (see \cite{Paskunasunicity}), the $\bK$-type of a supercuspidal representation need not be unique: by~\cite[Lemma~1.6.3.1]{Rocheottawa}, the number of constituents of~$\pi |_{\mO_{D}^\times}$ equals the \emph{torsion number} of~$\pi$, which is the number of unramified characters~$\chi$ of~$D^\times$ such that $\chi\pi \cong \pi$.

\paragraph{Trace formulas for $\bK$-types.} 
A conjugacy class in a profinite group~$G$ is \emph{pro-$p$-regular} if its elements are $p$-regular in all finite discrete quotients of~$G$ (that is, their order is coprime to~$p$). We have the following lemma.
\begin{lemma}\label{pregularconjugacy}
If ~$G$ is a profinite group, $H$ is a finite group, and $\pi: G \to H$ is a continuous surjection with pro-$p$ kernel, then~$\pi$ induces a bijection from the pro-$p$-regular classes of~$G$ to the $p$-regular classes of~$H$.
\end{lemma}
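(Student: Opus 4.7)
The plan is to prove the lemma in three steps. Well-definedness of the induced map on conjugacy classes is immediate: since $H$ is itself a finite quotient of $G$, a pro-$p$-regular element has image of order coprime to $p$ in $H$, and conjugation descends through $\pi$.

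For surjectivity, given a $p$-regular $h \in H$, I would lift arbitrarily to some $g_0 \in \pi^{-1}(h)$ and take the $p$-regular part $g = (g_0)_{p'}$ coming from the canonical decomposition of the procyclic closed subgroup $\overline{\langle g_0\rangle}$ as a product of its pro-$\ell$-Sylow subgroups over all primes $\ell$. Continuous homomorphisms of procyclic groups preserve this decomposition, so $\pi(g) = (\pi(g_0))_{p'} = h_{p'} = h$, and $g$ is pro-$p$-regular by construction.

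For injectivity, suppose $g_1, g_2 \in G$ are pro-$p$-regular elements with conjugate images in $H$. After lifting a conjugator through $\pi$ and replacing $g_1$, I may assume $\pi(g_1) = \pi(g_2) = h$. Set $C = \langle h\rangle$, a finite cyclic $p'$-group, and $G' = \pi^{-1}(C)$, a closed subgroup fitting in $1 \to K \to G' \to C \to 1$ with $K$ pro-$p$ and $C$ of order coprime to $p$. For every open normal subgroup $N$ of $G'$, the finite quotient $G'/N$ is an extension of $C$ by the normal $p$-subgroup $KN/N$; a count of orders shows that each $\langle g_iN\rangle$ is a cyclic complement to $KN/N$ of order exactly $|C|$, and Schur--Zassenhaus (applicable because $KN/N$ is a solvable $p$-group) makes all such complements conjugate in $G'/N$. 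Comparing images in $C$ forces a conjugator to carry $g_1N$ to $g_2N$ itself rather than to some other power of $g_2N$, so the closed subset $X_N = \{x \in G' : xg_1x^{-1} \equiv g_2 \pmod{N}\}$ is non-empty. The $X_N$ form a filtered family of non-empty closed subsets of the compact space $G'$, and any element of $\bigcap_N X_N$ conjugates $g_1$ to $g_2$.

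The main obstacle is this last step: I have to control the Schur--Zassenhaus conjugator at each finite level precisely enough to identify $g_1 N$ with $g_2 N$ exactly, which uses that $h$ generates $C$ to pin down the ambiguity in the power, and then promote the compatible family of finite conjugations to a single conjugation in $G$ via a compactness argument. The remaining steps are essentially formal once the procyclic decomposition is invoked.
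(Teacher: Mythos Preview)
Your proof is correct, with one minor imprecision: for a general open normal subgroup $N$ of $G'$ the quotient $G'/N$ is an extension of $G'/KN$ (a quotient of $C$) by $KN/N$, not necessarily of $C$ itself. This is harmless, since either the argument goes through verbatim with $C$ replaced by its quotient $G'/KN$, or one simply restricts to $N \subseteq K$, which is cofinal because $K$ is open in $G'$.

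Your argument is genuinely different from the paper's. The paper first reduces to the case where $G$ is finite via the same inverse-limit compactness device you use at the end, and then handles surjectivity essentially as you do (Jordan decomposition of a lift). For injectivity in the finite case, however, the paper gives a counting argument: since $\ker(\pi)$ is a normal $p$-group it acts trivially on any irreducible $\overline{\bF}_p$-representation of $G$, so $G$ and $H$ have the same irreducible mod~$p$ representations, hence the same number of $p$-regular conjugacy classes, and a surjection between finite sets of equal cardinality is a bijection. Your Schur--Zassenhaus argument is more hands-on and purely group-theoretic, avoiding the input from Brauer theory; the paper's argument is shorter but relies on the nontrivial fact that the number of irreducible $\overline{\bF}_p$-representations of a finite group equals the number of its $p$-regular classes. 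Either route is perfectly adequate here.
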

\begin{proof}
Assume that the claim is true when~$G$ is a finite group. 
Then every $p$-regular element~$h \in H$ has a $p$-regular lift in every finite discrete quotient of~$G$ surjecting onto~$H$.
Since a directed inverse limit of nonempty finite sets is nonempty, we find that~$h$ has a pro-$p$-regular lift in~$G$, and so the map induced by~$\pi$ is surjective.
To prove it is injective, let~$g_1$ and~$g_2$ be pro-$p$-regular elements of~$G$ that are conjugate in every finite discrete quotient~$\lbar G$ of~$G$.
Then the finite set of elements of~$\lbar G$ conjugating~$\lbar g_1$ to~$\lbar g_2$ is not empty.
Taking the inverse limit as~$\lbar G$ varies of these finite sets, we find an element of~$G$ that conjugates~$g_1$ to~$g_2$.
So the map induced by~$\pi$ is injective.

It follows that it suffices to prove the claim for~$G$ a finite group. In this case, the surjectivity of~$\pi$ on $p$-regular classes follows because every $p$-regular element~$\pi(x)$ of~$H$ admits a $p$-regular lift, since if $x = x_{(p)}x^{(p)}$ then the images of these under~$\pi$ commute, hence $\pi(x_{(p)}) = \pi(x)_{(p)} = 1$ and~$\pi(x^{(p)}) = \pi(x)$. Then the claim follows because~$G$ and~$H$ have the same number of $p$-regular classes, since every irreducible $\cbF_p$-representation of~$G$ is trivial on~$\ker(\pi)$, and the number of irreducible $\cbF_p$-representations of a finite group equals the number of its $p$-regular conjugacy classes~\cite[Section~18.2, Corollary~3]{Serrereps}.
\end{proof}

We now go back to the situation where~$G = \GL_n(F)$ for a finite extension~$F/\bQ_p$. 
Fix a maximal simple character~$\theta$ of endo-class~$\Theta_F$ and a character~$\chi: \be_{n/\delta(\Theta_F)}^\times \to \bC^\times$, possibly not $\be$-regular. 
Let~$[\fA, \beta]$ be a simple stratum for~$\theta$ and write $B = Z_A(F[\beta])$ and~$\fB = \fA \cap B$.
Let~$\fs_G$ be the unique simple inertial class in~$G$ with invariants
\[
\cl(\fs_G) = \Theta_F \text{ and } \Lambda(\fs_G, \Theta_E, \kappa_\theta) = [\chi].
\]
As in the above, we assume that $J_\theta \subseteq \bK$, we fix a maximal unramified extension~$K^+$ of~$F[\beta]$ in~$Z_A(F[\beta])$ normalizing~$\fA$, and we let~$K$ be the maximal unramified extension of~$F$ in~$K^+$. 
We remark that the unit group $K^\times$ normalizes~$\bK$: this is because $\mO_K \subseteq \mO_{K^+} \subseteq \fB$, and $K^\times = \pi_F^{\bZ} \times \mO_K^\times$ for a uniformizer~$\pi_F$ of~$F$, which is central in~$G$.

By Theorem~\ref{sigmamax}, the $\bK$-type $\sigma^+_{\fP_{\min}} \left ( \fs_G \right )$ is $\Ind_{J_\theta}^{\bK}(\kappa_\theta \otimes (-1)^{n/\delta(\Theta_F)+1}R_w(\chi))$, which is a virtual representation of~$\bK$ if~$\chi$ is not $\be$-regular. 
When~$\chi$ is $\be$-regular, this is a maximal simple type and~$\fs_G$ is a supercuspidal inertial class.
We will shorten notation to $\lambda = \kappa_\theta \otimes (-1)^{n/\delta(\Theta_F)+1}R_w(\chi)$ and $\sigma^+ = \sigma^+_{\fP_{\min}} \left ( \fs_G \right )$.

\begin{pp}\label{ellipticvanishing}
If $x \in \bK$ is a pro-$p$-regular element that is not $\bK$-conjugate to an element of~$\mu_K$ then $\tr \sigma^+(x) = 0$.
(Recall that $\mu_K$ is the group of prime-to-$p$ roots of unity in~$K^\times$.)
\end{pp}
\begin{proof}
By the Frobenius formula for an induced character we have
\begin{displaymath}
\tr \sigma^+(x) = \sum_{y \in J_\theta \backslash \bK} \tr \lambda(yxy^{-1}).
\end{displaymath}
By Lemma~\ref{pregularconjugacy}, the pro-$p$-regular conjugacy classes of~$J_\theta$ are in bijection with the semisimple conjugacy classes of~$\GL_{n/\delta(\Theta_F)}(\be)$, via our isomorphism $J_\theta/J^1_\theta \to \GL_{n/\delta(\Theta_F)}(\be)$. 
Now the claim follows by Proposition~\ref{DLinduction}, as $R_w(\chi)$ vanishes on semisimple conjugacy classes that are not represented in a maximal elliptic torus, and~$\mu_K = \mu_{K^+}$ maps isomorphically to such a torus.
\end{proof}

We now give a formula for~$\tr \sigma^+(x)$ when $x \in \mu_K$ generates an unramified extension~$L/F$ (in the sense that~$L = F[x]$). 
For this, we take the interior lift~$\theta_L$, and notice the decomposition
\begin{displaymath}
J_{\theta_L} = \GL_{n/[L[\beta]:F]}(\mO_{L[\beta]})J^1_{\theta_L}.
\end{displaymath}We write~$G_L = Z_G(L)$ and notice the equality $Z_\bK(L) = \bK \cap G_L$. 
Since~$K^\times$ normalizes~$\bK$, this is a maximal compact subgroup of~$G_L$ that we denote~$\bK_L$.

We are going to apply the Glauberman correspondence in the form stated in~\cite{BHLLIII, InertialJL}. 
Let $\widetilde{\eta}(\theta)$ be the only extension of~$\eta_\theta$ to~$\mu_K \ltimes J^1_\theta$ whose determinant is trivial on~$\mu_K$. 
Then~$\tld \eta(\theta)$ is isomorphic to the restriction of $\epsilon^1_\theta\kappa_\theta$ to~$\mu_K \ltimes J^1_\theta$, since $\epsilon^1_\theta\kappa_\theta$ is the $p$-primary $\beta$-extension of~$\theta$. 
It follows from~\cite[Proposition~2.13(2, 3)]{InertialJL} that for a certain function~$\epsilon_\theta: \mu_K \to \{\pm 1\}$ one has
\[
\tr \kappa_\theta(x) = \epsilon^1_\theta(x) \epsilon_\theta(x) \dim \eta_{\theta_L}.
\]
It follows from this and~\cite[Proposition~2.14]{InertialJL} that if $x \in \mu_K$ and~$L = F[x]$ then
\begin{equation}\label{tracekappa}
\tr \kappa_\theta(x) = \epsilon^0_{\mu_K}(V_{\theta_L})\epsilon^0_{\mu_K}(V_\theta) \dim \eta_{\theta_L}
\end{equation}
where the~$\epsilon^0$ are signs, and the $\epsilon^1$ are quadratic characters of~$\mu_K$.
(We remark that by definition $\epsilon^1_\theta = \epsilon^1_{\mu_K}(-, V_{\theta})$ in the notation of~\cite{InertialJL}: compare the statement of~\cite[Theorem~4.10]{InertialJL}.)

Now consider the pair $(J_{\theta_L}, \lambda_L = \kappa_{\theta_L} \otimes (-1)^{n/[L[\beta]:F]+1}R_w(\chi))$ where the Lusztig induction is taken from~$\be_{n/\delta(\Theta_F)}^\times$ to the centralizer of the image of~$x$ in~$\GL_{n/[F[\beta]:F]}(\be)$, which is the group~$\GL_{n/[L[\beta]:F]}(\be[x])$. 
When~$\chi$ is~$\be[x]$-regular, this is a maximal simple type in~$G_L$. 
The corresponding $\bK_L$-type $\sigma^+_L = \Ind_{J_{\theta_L}}^{\bK_L}\lambda_L$ has dimension equal to
\begin{equation}\label{dimensionsigmaplus}
\dim(\sigma^+_L) = \dim \eta_{\theta_L}|J_{\theta_L} \backslash \bK_L | (\GL_{n/[L[\beta]:F]}(\be[x]):\be_{n/\delta(\Theta_F)}^\times)_{p'}.
\end{equation}

\begin{rk}\label{independencedimension}
The dimension of a virtual representation is the value of its character at the identity. 
In our case, it is independent of~$\chi$.
\end{rk}

\begin{pp}\label{traceGL(n)}
Let~$x \in \mu_K$ and let~$L = F[x]$. Then
\begin{displaymath}
\tr \sigma^+(x) = (-1)^{n/[F[\beta]:F] + n/[L[\beta]:F]} \epsilon^0_{\mu_K}(V_{\theta_L})\epsilon^0_{\mu_K}(V_\theta) \dim(\sigma^+_L) \sum_{\gamma \in \Gal(L/F)} \chi(\gamma x).
\end{displaymath}
\end{pp}

\begin{proof}
We begin with the Frobenius formula
\begin{displaymath}
\tr \sigma^+(x) = \sum_{J_\theta \backslash \bK} \tr \lambda(yxy^{-1}) =  \sum_{J_\theta \backslash \bK} (-1)^{n/[F[\beta]:F]+1}  R_w(\chi)(yxy^{-1})\tr \kappa_\theta(yxy^{-1})
\end{displaymath}
and the remark that if~$y \in \bK$ and $\tr \lambda(yxy^{-1}) \not = 0$ then there exists an element of~$J_\theta$ conjugating $yxy^{-1}$ to an element of~$\mu_{K}$. Indeed, by Lemma~\ref{pregularconjugacy} the pro-$p$-regular classes of~$J_\theta$ are in bijection with those of $\GL_{n/[F[\beta]:F]}(\be)$, and by Proposition~\ref{ellipticvanishing} the only ones on which~$ R_w(\chi)$ is nonzero are those represented in~$\mu_K$. It follows that~$J_\theta y = J_\theta \widetilde{y}$ for some~$\widetilde{y} \in N_\bK(L)$ (this is the normalizer of~$L$ for the conjugation action of~$\bK$). We have an isomorphism $N_\bK(L) / \bK_L \to \Gal(L/F)$, and the intersection $N_\bK(L) \cap J_\theta$ maps onto $\Gal(L / L \cap F[\beta])$. (To see this, notice that if~$x \in J_\theta = \fB_{\max}^\times J^1_\theta$ normalizes~$\mu_L$ then its image~$\overline{x}$ in~$\GL_{n/[F[\beta]:F]}(\be)$ normalizes the image of~$\mu_L$, and the automorphism of~$\mu_L$ induced by~$\ad\, \overline{x}$ determines that induced by~$\ad\, x$, hence is also induced by an element of~$\fB_{\max}^\times \cong \GL_{n/[F[\beta]:F]}(\mO_{F[\beta]})$.) 

It follows that the space $J_\theta N_\bK(L)$ decomposes into double cosets
\begin{displaymath}
J_\theta N_\bK(L) = \bigcup_{\sigma \in \Gal(L/F)} J_\theta t_\sigma \bK_L
\end{displaymath}
where $t_{\sigma} \in \bK$ induces~$\sigma$ on~$L$ by conjugation, and $J_\theta t_\sigma \bK_L = J_\theta t_\tau \bK_L$ if and only if $\tau\sigma^{-1} \in \Gal(L/L\cap F[\beta])$. Since $J_\theta t_\sigma \bK_L = J_{\theta} \bK_L t_\sigma$, we deduce that
\begin{displaymath}
\tr \sigma^+(x) = [L: F[\beta]\cap L]^{-1} \sum_{\gamma \in \Gal(L/F)} |J_{\theta_L} \backslash \bK_L| \tr \lambda(\gamma x).
\end{displaymath}
Recalling Formula~(\ref{tracekappa}), this is equal to
\begin{displaymath}
\tr \sigma^+(x) = [L:F[\beta]\cap L]^{-1} \epsilon^0_{\mu_K}(V_{\theta_L})\epsilon^0_{\mu_K}(V_\theta) \dim \eta_{\theta_L} \sum_{\gamma \in \Gal(L/F)} |J_{\theta_L} \backslash \bK_L| (-1)^{n/[F[\beta]:F]+1}R_w(\chi)(\gamma x).
\end{displaymath}
By Proposition~\ref{DLinduction}, and the fact that~$[F[\beta]:F] = \delta(\Theta_F)$, we have
\begin{displaymath}
(-1)^{n/[F[\beta]:F]+1}R_w(\chi)(x) = (-1)^{n/[F[\beta]:F] + n/[L[\beta]:F]}(\GL_{n/[L[\beta]:F]}(\be[x])):\be_{n/\delta(\Theta_F)}^\times)_{p'}\sum_{\alpha \in \Gal(\be[x]/\be)} \chi(\alpha x).
\end{displaymath}
Recall that $\be$ is isomorphic to the residue field of~$F[\beta]$, and since $L[\beta]/F[\beta]$ is an unramified extension generated by~$x$, $\be[x]$ is isomorphic to the residue field of~$L[\beta]$. The restriction map is an isomorphism 
\begin{displaymath}
\res: \Gal(L[\beta]/F[\beta]) \to \Gal(L / F[\beta] \cap L)
\end{displaymath} 
and it follows that
\begin{displaymath}
[L: F[\beta] \cap L]^{-1} \sum_{\gamma \in \Gal(L/F)} \sum_{\alpha \in \Gal(\be[x]/ \be )}\chi(\alpha\gamma x) = \sum_{\gamma \in \Gal(L/F)} \chi(\gamma x). 
\end{displaymath}
The claim now follows by Formula~(\ref{dimensionsigmaplus}).
\end{proof}

We end this section by proving an analogous result for~$D^\times$. 
Let $(J_\theta, \lambda = \kappa_\theta \otimes \chi)$ be a maximal simple type in~$D^\times$ and let $\sigma^+_D = \Ind_{J_\theta}^{\mO_D^\times}\lambda$ be the associated $\bK$-type. 
Fix a simple stratum~$[\mO_D, \beta]$ defining~$\theta$ and fix a maximal unramified extension~$K^+$ of~$F[\beta]$ in~$D' = Z_D(F[\beta])$. 
Let~$x \in \mu_{K^+}$ generate an extension $L/F$. 
Let~$\theta_L$ be the interior lift of~$\theta$ to~$L$, and write~$\lambda_L$ for any maximal simple type in~$Z_{D^\times}(L)$ with maximal simple character~$\theta_L$. 
Write~$\sigma^+_L = \Ind_{J_{\theta_L}}^{Z_{\mO_D^\times}(L)}\lambda_L$ for the corresponding $\bK_L$-type.

\begin{pp}\label{traceD}
We have an equality
\begin{displaymath}
\tr \sigma_D^+(x) = \epsilon_{\mu_K}^0(V_\theta)\epsilon_{\mu_K}^0(V_{\theta_L}) \dim(\sigma^+_L)\chi(x).
\end{displaymath}
\end{pp}
\begin{proof}
If~$y \in \mO_D^\times$ and $yxy^{-1} \in J_\theta = \mO_{D'}^\times J^1_\theta$, then $yxy^{-1}$ is $J_\theta$-conjugate to an element of~$\mu_{K^+}$, because $\mu_{K^+}$ represents the pro-$p$-regular conjugacy classes in~$J_\theta$ by Lemma~\ref{pregularconjugacy}. Again by Lemma~\ref{pregularconjugacy}, elements of~$\mu_{K^+}$ are pairwise nonconjugate in~$\mO_D^\times$. So $J_\theta y = J_\theta \widetilde{y}$ for some $\widetilde{y} \in Z_{\mO_D^\times}(L)$, and we deduce that
\begin{displaymath}
\tr \sigma^+_D(x) = |J_{\theta_L} \backslash Z_{\mO_D^\times}(L)|\tr \lambda(x).
\end{displaymath}
By the same argument as for~$\GL_n(F)$, we know that 
\begin{displaymath}
\epsilon^1_\theta(x)\tr \kappa_\theta(x) = \epsilon_\theta(x)\dim \eta_{\theta_L} = \epsilon_\theta(x) \dim(\lambda_L),
\end{displaymath} 
hence the claim follows from~\cite[Proposition~2.14]{InertialJL}.
\end{proof}

\paragraph{The formal degree formula.} Let~$\fs$ be a supercuspidal inertial class for~$\GL_n(F)$, and let~$\fs_D = \JL^{-1}(\fs)$ be its Jacquet--Langlands transfer to~$D^\times$. 
We give a relation between the dimension of a~$\bK$-type $\sigma^+_D$ for~$\fs_D$ and the dimension of a $\bK$-type~$\sigma^+$ for~$\fs$. 
We assume that~$\sigma_D$ and~$\sigma$ have been constructed as in the above. 
Write~$q = |\mbf|$ and $t(\fs) = t(\fs_D)$ for the torsion numbers of the inertial classes. Normalize the formal degrees for~$\GL_n(F)$ so that the Steinberg representation has formal degree one, and let~$\Iw \subseteq \bK$ be an Iwahori subgroup.

\begin{thm}\cite[(1.4.1)]{BHparametric}\label{formaldegreeformula}
The formal degree of any irreducible representation containing a maximal simple type~$(J_\theta, \lambda)$ corresponding to~$\fs$ is
\begin{displaymath}
d(\pi) = t(\fs) \dim(\lambda) \frac{q^n-1}{(q-1)^n} \frac{\mu_G(\Iw)}{\mu_G(J_\theta)}
\end{displaymath}
for any Haar measure~$\mu_G$ on~$G$.
\end{thm}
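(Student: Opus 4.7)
The formula is labelled as equation~(1.4.1) of~\cite{BHparametric}, so the plan is essentially to recall that result and match Bushnell--Henniart's notation with ours, rather than to give an independent derivation.

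First, I would recall that by Bushnell--Kutzko theory any discrete series $\pi$ in $\fs$ has the form $\pi = c\text{-}\Ind_{\bJ(\lambda)}^{G} \Lambda$ for an extension $\Lambda$ of $\lambda$ to the $G$-normalizer $\bJ(\lambda)$ of the type, with $\dim(\Lambda) = \dim(\lambda)$. The standard formal degree computation for such a compact induction yields
\begin{displaymath}
d(\pi)\,\mu_{G/F^\times}\bigl(\bJ(\lambda)/F^\times\bigr) = \dim(\lambda)\cdot c_0
\end{displaymath}
for a universal constant $c_0$ fixed by the chosen normalization of formal degrees (here $d(\St) = 1$). The index $[\bJ(\lambda) : F^\times J_\theta]$ equals the torsion number $t(\fs)$: this is the exact analogue of remark~\ref{torsiontypes}, proved using~\cite{SecherreTadicconjecture} in the $\GL_n(F)$ setting rather than the $D^\times$ setting, and it reflects the fact that both quantities count unramified characters $\psi$ of $G$ with $\psi\pi \cong \pi$. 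This rewrites the denominator as a product involving $t(\fs)$, $\mu_G(J_\theta)$, and center-related factors.

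Second, I would pin down $c_0$ and the arithmetic factor $(q^n-1)/(q-1)^n$ by invoking Bushnell--Henniart's \emph{parametric degree}. Its two key properties are that it agrees with the formal degree on discrete series (in the Steinberg-normalized setting) and that it takes the explicit value $(q^n-1)/(q-1)^n$ on the Steinberg representation; morally, this factor is the ratio $|\mbf_n^\times|/|(\mbf^\times)^n|$ distinguishing an elliptic maximal torus from the split diagonal torus in $\GL_n(\mbf)$. Equation~(1.4.1) of~\cite{BHparametric} is essentially the assembly of these two ingredients, and the stated formula follows by substituting the identifications of the previous paragraph.

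The resulting expression is manifestly invariant under rescaling the Haar measure, since only the ratio $\mu_G(\mI)/\mu_G(J_\theta)$ appears, which is a useful consistency check. The main technical inputs, not reproved here, are the identification of $\dim(\lambda)$ with the parametric degree and the precise normalization of that degree on the Steinberg; both form the core content of~\cite{BHparametric}. I expect the only delicate point to be keeping track of the centre $F^\times$ and its interaction with the choice of formal-degree normalization, as the indices and Haar measures would otherwise be routine to rearrange.
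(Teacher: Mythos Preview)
The paper gives no proof of this theorem at all: it is stated with the attribution ``See~\cite{BHparametric}~(1.4.1)'' and then immediately used in the next proposition. Your proposal correctly identifies this, and your plan to simply match notation with Bushnell--Henniart is exactly what the paper does implicitly. In that sense there is nothing to compare: you and the paper agree that this is a citation, not something to be reproved.

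Your sketch goes further than the paper by outlining how the formula arises, and most of it is reasonable, but one point is muddled. You say the parametric degree ``agrees with the formal degree on discrete series (in the Steinberg-normalized setting)'' and also ``takes the explicit value $(q^n-1)/(q-1)^n$ on the Steinberg representation''. These two statements are inconsistent, since by the chosen normalization $d(\St)=1$. The factor $(q^n-1)/(q-1)^n$ in the formula is not the parametric degree of Steinberg; rather, it arises when one unwinds the Haar-measure normalization fixing $d(\St)=1$ in terms of the Iwahori volume, as the paper itself does in the proof of Proposition~\ref{typedimension} immediately afterwards (where the relevant index $(\bK:\mI)$ is computed and combined with this factor to produce $(\GL_n(\mbf):\mbf_n^\times)_{p'}$). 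If you want to keep a sketch, it would be cleaner to drop the parametric-degree language and instead say: the compact-induction formula gives $d(\pi)\,\mu_{G/F^\times}(\bJ(\lambda)/F^\times)=\dim(\lambda)$, one rewrites the volume using $[\bJ(\lambda):F^\times J_\theta]=n/t(\fs)$ (not $t(\fs)$; check this index carefully), and the normalization $d(\St)=1$ pins down the remaining constant as in~\cite{BHparametric}. But since the paper treats this as a black-box input, none of this is actually needed.
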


\begin{pp}\label{typedimension}
We have an equality $\dim(\sigma^+) = (\GL_n(\mbf):\mbf_n^\times)_{p'} \dim(\sigma^+_D)$.
\end{pp}
\begin{proof}
Multiplying numerator and denominator of the equation in Theorem~\ref{formaldegreeformula} by~$\mu_G(\bK)^{-1}$ yields
\begin{displaymath}
d(\pi) = t(\fs)\dim(\sigma^+)(\GL_n(\mbf):\mbf_n^\times)_{p'}^{-1}
\end{displaymath} 
because $\dim(\sigma^+) = \dim(\lambda)(\bK : J_\theta)$ and $(\bK : \Iw) = \frac{(q^n-1) \cdots (q^n-(q^{n-1}))}{q^{n(n-1)/2}(q-1)^n}$. We have seen that any irreducible representation in~$\fs_D$ restricts to~$\mO_D^\times$ to a sum of~$t(\fs_D)$ representations each appearing with multiplicity one and all conjugate under~$D^\times$, which are precisely the $\bK$-types for~$\fs_D$. Since $d(\pi) = \dim(\JL^{-1}\pi)$, we deduce that
\begin{displaymath}
t(\fs_D)\dim(\sigma^+_D) = t(\fs)\dim(\sigma^+)(\GL_n(\mbf):\mbf_n^\times)_{p'}^{-1}
\end{displaymath}
and the claim follows since~$t(\fs_D) = t(\fs)$, as the Jacquet--Langlands correspondence commutes with character twists.
\end{proof}

\section{Galois deformation theory.}\label{Galoisdeformationtheory} Working in the framework of \cite[Section~4]{EGBM}, we recall the definition of potentially semistable deformation rings of fixed Hodge type and discrete series Galois type, and prove some properties of the monodromy stratification. We state a form of the geometric Breuil--M\'ezard conjecture for the mod~$p$ fibers of these rings, and deduce a description of the cycle corresponding to discrete series lifts. In this section, we fix $p$-adic coefficients consisting of a finite extension $E/\bQ_p$ with ring of integers~$\mO_E$, uniformizer~$\pi_E$, and residue field~$\be$. We let $\rhobar: G_F \to \GL_n(\be)$ be a continuous representation, and we assume that~$E$ is sufficiently large (so that, for instance, it contains all $[F:\bQ_p]$ embeddings of~$F$).

\paragraph{Weights and algebraic representations.} Write~$\bZ^n_+$ for the set of $n$-tuples~$(\lambda_1, \ldots, \lambda_n)$ of integers such that $\lambda_1 \geq \ldots \geq \lambda_n$. This defines a dominant character $\diag(t_1, \ldots, t_n) \mapsto \prod_{i=1}^n t_i^{\lambda_i}$ of the diagonal torus in~$\GL_{n, F}$. There is an associated algebraic $\mO_F$-representation $M_{\lambda}' = \Ind_{B_n}^{\GL_n}(w_{\max} \lambda)_{/\mO_F}$ of~$\GL_{n, \mO_F}$ with highest weight~$\lambda$, for the upper-triangular Borel subgroup~$B_n$ and the longest element~$w_{\max}$ of the Weyl group. We write~$M_{\lambda}$ for the $\mO_F$-points of this representation. Then fix~$\lambda \in (\bZ_+^n)^{\Hom_{\bQ_p}(F, E)}$ and define an $\mO_E$-representation of~$\GL_n(\mO_F)$ by
\begin{displaymath}
L_{\lambda} = \bigotimes_{\tau: F \to E} \left ( M_{\lambda_\tau} \otimes_{\mO_F, \tau} \mO_E \right ).
\end{displaymath}

Next we recall some mod~$p$ representations. Given~$a \in \bZ_+^n$ with $p-1 \geq a_i - a_{i+1}$ for all $1 \leq i \leq n-1$, define
\begin{displaymath}
P_a = \Ind_{B_n}^{\GL_n}(w_{\max} a)_{/\mbf}(\mbf)
\end{displaymath}
and let~$N_a$ be the irreducible subrepresentation of~$P_a$ generated by a highest weight vector. The \emph{Serre weights} of~$\GL_n(\mbf)$ are the elements $a \in (\bZ_+^n)^{\Hom(\mbf , \be)}$ such that for all~$\sigma: \mbf \to \be$ we have $p-1 \geq a_{\sigma, i} - a_{\sigma, i+1}$ for~$1 \leq i \leq n-1$, and $0 \leq a_{\sigma, n} \leq p-1$. We furthermore require that not all $a_{\sigma, n} = p-1$. To a Serre weight there corresponds an irreducible $\be$-representation of~$\GL_n(\mbf)$, defined by
\begin{displaymath}
F_a = \bigotimes_{\tau \in \Hom(\mbf, \be)}\left ( N_{a_\tau} \otimes_{\mbf, \tau} \be \right ).
\end{displaymath}
These are absolutely irreducible and pairwise non-isomorphic, and every irreducible $\be$-representation of~$\GL_n(\mbf)$ has this form.

Finally, we introduce analogues for~$D^\times$. For every $\bQ_p$-linear embedding $\tau: F \to E$, fix an embedding $\tau^+: F_n \to E$ lifting~$\tau$ and write~$M_\lambda^+$ for the $\mO_{F_n}$-points of~$M_{\lambda}'$ (so that $M_{\lambda}^+ |_{\GL_n(\mO_F)}$ is isomorphic to $M_\lambda \otimes_{\mO_F} \mO_{F_n}$). Then we introduce
\begin{displaymath}
L_\lambda^+ = \bigotimes_{\tau: F \to E} \left ( M_{\lambda_\tau}^+ \otimes_{\mO_{F_n}, \tau^+} \mO_E \right )
\end{displaymath}
which has an action of $\mO_D^\times$ via a choice of $F_n$-linear isomorphism $j: D \otimes_F F_n \to M_n(F_n)$ mapping the order $\mO_D \otimes_{\mO_F} \mO_{F_n}$ into~$M_n(\mO_{F_n})$, and the inclusion $D \to D \otimes_F F_n, \; d \mapsto d \otimes 1$. We have the following lemma.

\begin{lemma}\label{weightcharacter}
If~$z_D \in \mO_D^\times$ corresponds to~$z \in \GL_n(\mO_F)$, in the sense that it is a semisimple element with the same characteristic polynomial as~$z$, then $\tr_{L_\lambda^+}(z_D) = \tr_{L_\lambda}(z)$.
\end{lemma}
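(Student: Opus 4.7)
The plan is to reduce both sides to the evaluation of a single polynomial---the character of an algebraic representation---at elements sharing a common characteristic polynomial. First I would recall that the character of the $\fo_F$-algebraic representation $M_\mu$ of $\GL_{n,\fo_F}$ with highest weight~$\mu$ is a conjugation-invariant regular function $\chi_\mu \in \fo_F[\GL_n]$, and that $M_\mu^+ = M_\mu \otimes_{\fo_F} \fo_{F_n}$ has character equal to the image of~$\chi_\mu$ in $\fo_{F_n}[\GL_n]$. Consequently the value $\chi_\mu(h)$ at a matrix $h$ depends only on the characteristic polynomial of $h$.

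Next I would unpack the two traces factor by factor. For the $\tau$-factor of $L_\lambda$ one obtains $\tau(\chi_{\lambda_\tau}(z))$, while the corresponding $\tau^+$-factor of $L_\lambda^+$ contributes $\tau^+(\chi_{\lambda_\tau}(g))$, where $g = j(z_D \otimes 1) \in \GL_n(\fo_{F_n})$ is the image of $z_D$ under the fixed isomorphism (the hypothesis $j(\fo_D \otimes_{\fo_F} \fo_{F_n}) \subseteq M_n(\fo_{F_n})$ guarantees integrality). The key observation is that the characteristic polynomial of~$g$ is the reduced characteristic polynomial of~$z_D$, which has coefficients in~$\fo_F$ by the defining property of reduced characteristic polynomials in a central simple algebra. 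Therefore the conjugation-invariant quantity $\chi_{\lambda_\tau}(g)$ actually lies in $\fo_F$, and in particular $\tau^+(\chi_{\lambda_\tau}(g)) = \tau(\chi_{\lambda_\tau}(g))$ independently of the chosen lift~$\tau^+$.

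Finally, the statement that $z_D$ corresponds to~$z$ should be understood as the equality of their (reduced) characteristic polynomials---the standard matching of regular semisimple classes that underlies the Jacquet--Langlands correspondence. Combined with conjugation-invariance this gives $\chi_{\lambda_\tau}(g) = \chi_{\lambda_\tau}(z)$ for every~$\tau$, and taking the product over $\tau$ yields $\tr_{L_\lambda^+}(z_D) = \tr_{L_\lambda}(z)$. There is no real obstacle: the entire argument rests on conjugation-invariance of Weyl characters together with the rationality of the reduced characteristic polynomial, with only the bookkeeping of embeddings to check.
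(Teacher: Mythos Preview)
Your argument is correct and is essentially an explicit unpacking of the paper's one-line proof. The paper simply observes that $L_\lambda^+$ is a lattice in a $\GL_n(F_n)$-representation over~$E$, that $\tr_{L_\lambda}(z) = \tr_{L_\lambda^+}(z)$ for $z \in \GL_n(\fo_F) \subset \GL_n(\fo_{F_n})$, and that $z$ and $j(z_D \otimes 1)$ are conjugate in $\GL_n(F_n)$; your version trades the conjugacy statement for the equivalent equality of characteristic polynomials and carries out the factor-by-factor bookkeeping, which has the minor side benefit of making the independence from the choice of lift~$\tau^+$ directly visible.
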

\begin{proof}
This is because~$L_\lambda^+$ is a lattice in a $\GL_n(F_n)$-representation over~$E$, $\tr_{L_\lambda}(z) = \tr_{L_\lambda^+}(z)$, and~$z$ and~$z_D$ are conjugate in~$\GL_n(F_n)$ under any choice of~$j$.
\end{proof}

\paragraph{Inertial types and monodromy.}
An \emph{inertial type} for~$F$ is a smooth finite-dimensional representation of~$I_F$ that extends to a representation of~$W_F$. The type is a \emph{supercuspidal type} if it extends to an irreducible representation of~$W_F$, and a \emph{discrete series type} if it is a multiple of a supercuspidal inertial type. Two $n$-dimensional Weil--Deligne representations have the same restriction to inertia if and only if they are the Langlands parameters of irreducible representations of~$\GL_n(F)$ in the same inertial class. It follows that if~$\tau = \tau_0^{\oplus r}$ for a supercuspidal inertial type~$\tau_0$, there are a corresponding simple inertial class~$\fs$ for~$\GL_n(F)$ and representations~$\sigma_\fP(\tau) = \sigma_{\fP}(\fs)$ of~$\bK$ indexed by partitions~$\fP$ of~$r$. 
There are also virtual representations~$\sigma_{\fP}^+(\tau) = \sigma_{\fP}^+(\fs)$. 
Similarly, we define representations of~$\mO_D^\times$ by letting~$\sigma_D(\tau)$ be an arbitrary choice of $\bK$-type for~$\JL^{-1}(\fs)$ (we will prove our results for all possible choices). For~$\lambda \in (\bZ_+^n)^{\Hom_{\bQ_p}(F, E)}$ we put $\sigma_{\fP}(\tau, \lambda) = \sigma_{\fP}(\tau) \otimes L_\lambda$,  $\sigma_{\fP}^+(\tau, \lambda) = \sigma_{\fP}^+(\tau) \otimes L_\lambda$, and $\sigma_D(\tau, \lambda) = \sigma_D(\tau) \otimes L_\lambda ^+$.

Let~$\tau$ be a discrete series inertial type.
Our results will relate~$\sigma_{\fP_{\min}}^+(\tau)$ to the locus in the deformation space of~$\rhobar$ consisting of discrete series lift of inertial type~$\tau$, that is to say Galois representation lifting~$\rhobar$ whose associated Weil--Deligne representation is the Langlands parameter of an essentially square integrable representation in~$\fs$. Making this precise requires an account of the monodromy operator on the universal deformation ring. 

To start with, we recall some commutative algebra. Let~$A$ be a commutative ring with~$1$ and let~$M$ be a finite projective $A$-module of rank~$n$ with a nilpotent endomorphism~$N: A \to A$. To each prime ideal $x \in \Spec(A)$ we attach a partition~$\fP_x$ of~$n$ by considering the Jordan canonical form of the nilpotent endomorphism~$N(x)$ on~$M \otimes_A k(x)$, where~$k(x)$ is the residue field at~$x$. 

\begin{lemma}\label{monodromy}
Each partition~$\fP$ of~$n$ defines a closed subset of~$\Spec(A)$
\begin{displaymath}
\Spec(A)_{\geq \fP} = \{x \in \Spec(A) : \fP_x \geq \fP\}.
\end{displaymath}
\end{lemma}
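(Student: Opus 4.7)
The plan is to express the condition $\fP_x \geq \fP$ as a finite conjunction of upper bounds on the ranks of iterated powers of $N(x)$, and then conclude by upper semi-continuity of cokernel dimension.

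First I would recall the standard relation between the Jordan partition $\mu$ of a nilpotent endomorphism $\bar{N}$ of an $n$-dimensional vector space and the ranks of its powers: writing $\mu^t$ for the conjugate partition, one has $\dim \ker \bar{N}^k = \sum_{i=1}^{k} \mu^t(i)$, and equivalently $\mathrm{rank}(\bar{N}^k) = n - \sum_{i=1}^{k} \mu^t(i)$. Applied pointwise, this computes $\fP_x$ in terms of the function $k \mapsto \mathrm{rank}(N(x)^k)$.

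Next I would translate the paper's reverse-dominance order into inequalities on these ranks. By the convention introduced before the paragraph on Lusztig induction, $\fP_x \geq \fP$ means $\fP_x \leq_{\mathrm{dom}} \fP$ in the standard dominance order; since conjugation reverses dominance, this is equivalent to $\sum_{i=1}^{k} \fP^t(i) \leq \sum_{i=1}^{k} \fP_x^t(i)$ for every $k \geq 1$. Combined with the first step, this becomes
\[
\mathrm{rank}(N(x)^k) \leq n - \sum_{i=1}^{k} \fP^t(i) \qquad \text{for all } k \geq 1,
\]
a system of inequalities that is non-trivial for only finitely many $k$, because both partitions have bounded support.

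Finally, I would invoke upper semi-continuity of fiber dimension. Since $M$ is finite projective of rank $n$, the cokernel $M / N^k M$ is a finitely generated $A$-module whose fiber at $x$ has $k(x)$-dimension equal to $n - \mathrm{rank}(N(x)^k)$; the locus where this fiber dimension is $\geq n - r$ is therefore closed in $\Spec(A)$. Intersecting the finitely many relevant such loci produces $\Spec(A)_{\geq \fP}$ and proves the claim. The only real obstacle is notational: one must track the reverse-dominance convention through conjugation to land on rank inequalities of the correct direction, but the underlying geometric fact that nilpotent orbit closures form a dominance-lower set forces the signs once the conventions are unwound.
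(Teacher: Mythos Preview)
Your proposal is correct and follows essentially the same route as the paper's proof: both translate the reverse-dominance condition $\fP_x \geq \fP$ into the inequalities $\dim \ker N(x)^k \geq \dim \ker N(\fP)^k$ for all~$k$, identify $\dim \ker N(x)^k$ with $\dim_{k(x)}\bigl((M/N^kM)\otimes_A k(x)\bigr)$, and conclude by upper semi-continuity of fibre dimension for the finitely generated module $M/N^kM$. The only difference is cosmetic: you spell out the passage through conjugate partitions, whereas the paper states the kernel-dimension criterion directly and cites \cite{Pyvovarovthesis}.
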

\begin{proof}
See for instance~\cite[Section~4]{Pyvovarovthesis}. By our definition of~$\fP_x \geq \fP$ as the reverse of the dominance partial order on partitions, we find that $\fP_x \geq \fP$ if and only if~$\dim ( \ker N(x)^i) \geq \dim ( \ker N(\fP)^i)$ for all~$i$, where $N(\fP)$ has Jordan canonical form given by~$\fP$. Since $\dim ( \ker N(x)^i) = \dim (\coker N(x)^i)$ and $\coker N(x)^i \cong (\coker N^i) \otimes_A k(x)$, the claim follows since the set
\begin{displaymath}
\{x \in \Spec(A): \dim_{k(x)}((\coker N^i) \otimes_A k(x)) \geq m \}.
\end{displaymath}
is closed for all~$m \in \bZ$.
\end{proof}

\begin{rk}
It follows that if~$\Spec(A)$ is irreducible then the function~$x \mapsto \fP_x$ is constant on a dense open subset of~$\Spec(A)$, where it attains its minimal value. So we can define subsets~$\Spec(A)_{\fP}$ as the union of irreducible components of~$\Spec(A)$ where the minimal value of~$\fP_x$ is~$\fP$---equivalently, where the monodromy is generically~$\fP$. 
\end{rk}

\paragraph{Potentially semistable deformation rings.}
Let~$\tau: I_F \to \GL_n(E)$ be a discrete series inertial type and~$\lambda \in (\bZ^n_+)^{\Hom_{\bQ_p}(F, E)}$. Let $L/F$ be a finite Galois extension such that~$\tau$ is trivial on~$I_L$. By~\cite[Theorem~2.7.6]{Kisinsemistable} there is a quotient~$(R_{\rhobar}^\square[1/p])(\tau, \lambda)$ of the generic fibre of the universal lifting $\mO_E$-algebra~$R_{\rhobar}^\square$ whose points in a finite extension $E'/E$ correspond to potentially semistable lifts of~$\rhobar$ with Hodge type~$\lambda$ and inertial type~$\tau$. By~\cite[Theorem~2.5.5]{Kisinsemistable}, there is a finite projective $L_0 \otimes_{\bQ_p} (R_{\rhobar}^\square[1/p])(\tau, \lambda)$-module $D_{\rhobar}(\tau, \lambda)[1/p]$ with an automorphism~$\varphi$, semilinear with respect to~$\sigma \otimes 1$, and a $L_0 \otimes_{\bQ_p} (R_{\rhobar}^\square[1/p])(\tau, \lambda)$-linear nilpotent endomorphism~$N$, specializing to $D^*_{\mathrm{st}}(r^{\univ}_x|_{G_L})$ for any $\mO_E$-linear ring homomorphism $x: (R_{\rhobar}^\square[1/p])(\tau, \lambda) \to E'$. Since $D_{\rhobar}(\tau, \lambda)[1/p]$ is a direct factor of a free $L_0 \otimes_{\bQ_p} (R_{\rhobar}^\square[1/p])(\tau, \lambda)$-module, it is also projective over $(R_{\rhobar}^\square[1/p])(\tau, \lambda)$. By Lemma~\ref{monodromy} we have a stratification~$\Spec(R_{\rhobar}^\square[1/p])(\tau, \lambda)_{\geq \fP}$, and $\Spec(R_{\rhobar}^\square[1/p])(\tau, \lambda)_{\geq \fP_{\max}}$ corresponds to the vanishing of the monodromy operator, hence to potentially crystalline deformations of~$\rhobar$ (recall that~$\fP_{\max}$ is the partition $n = 1 + \cdots + 1$).

We write $(R_{\rhobar}^\square[1/p])(\tau, \lambda)_{\fP}$ for the reduced quotient corresponding to the set $\Spec(R_{\rhobar}^\square[1/p])(\tau, \lambda)_{\fP}$, and we let $R_{\rhobar}(\tau, \lambda)_{\fP}$ be the image of~$R^\square_{\rhobar} \to (R_{\rhobar}^\square[1/p])(\tau, \lambda)_{\fP}$. This is a reduced $\pi_E$-torsion free $\mO_E$-algebra whose generic fibre is isomorphic to $(R_{\rhobar}^\square[1/p])(\tau, \lambda)_{\fP}$, and its minimal primes have characteristic zero (by $\mO_E$-flatness), hence they are in bijection with those of the generic fibre, which are the components where the monodromy is generically~$\fP$. 
(By definition~$R_\rhobar(\tau,\lambda)_\fP$ is the Zariski closure in~$R_\rhobar^\square$ of the set of these components of the generic fibre.)
We define $R_{\rhobar}(\tau, \lambda)_{\geq \fP}$ similarly. By~\cite[Theorem~3.3.4]{Kisinsemistable}, these rings are equidimensional of the same dimension~$d$.

\paragraph{Cycles.} Since the rings $R_{\rhobar}(\tau, \lambda)_{\fP}$ are equidimensional and $\pi_E$-torsion free, their special fibres~$R_{\rhobar}(\tau, \lambda)_{\fP}/\pi_E$ are also equidimensional, and define a $(d-1)$-cycle on~$R^\square_{\rhobar}$ by~\cite[Lemma~2.1]{BMraffinees}. The geometric conjecture in~\cite[Section~4.2]{EGBM} states that for each Serre weight~$a$ for~$\GL_n(\mbf)$ there exists a cycle~$\mC_a$ on~$R^\square_{\rhobar}$ such that
\begin{displaymath}
Z(R_{\rhobar}(\tau, \lambda)_{\fP_{\max}}/\pi_E) = \sum_{a} n_a \mC_a
\end{displaymath}
where the multiplicity~$n_a$ is equal to the multiplicity of the representation~$F_a$ in~$\sigmabar_{\fP_{\max}}(\tau, \lambda)$, the semisimplified mod~$\pi_E$ reduction of~$\sigma_{\fP_{\max}}(\tau, \lambda)$. Notice that $R_{\rhobar}(\tau, \lambda)_{\fP_{\max}}$ is a potentially crystalline deformation ring of~$\rhobar$. This can be reformulated by defining a group homomorphism
\begin{displaymath}
\overline{\cyc}: R_{\be}(\GL_n(\mbf)) \to Z^{d-1}(R^\square_{\rhobar}), \; F_a \mapsto \mC_a
\end{displaymath}
and one can generalize the statement of the conjecture, and ask whether
\begin{displaymath}
Z(R_{\rhobar}(\tau, \lambda)_{\geq \fP}/\pi_E) = \overline{\cyc}(\sigmabar_{\fP}(\tau, \lambda)).
\end{displaymath}
This is motivated by the fact that $\sigma(\tau)_\fP$ is contained in a generic irreducible representation~$\pi$ of~$\GL_n(F)$ if and only if the inertial class of~$\pi$ corresponds to~$\tau$ and the partition~$\fP(\pi)$ attached to~$\pi$ satisfies~$\fP(\pi) \geq \fP$, that is
\begin{displaymath}
\Hom_{\bK}(\sigma_{\fP}(\tau), \pi) \not = 0 \; \text{if and only if} \; \rec(\pi)|_{I_K} \cong \tau \; \text{and} \; \fP(\pi) \geq \fP.
\end{displaymath}
Under some assumptions on~$\rhobar$, this is true when $F = \bQ_p$ and $n = 2$ by~\cite{KisinFM} or when $n = 2$ and~$\lambda = 0$ by~\cite{GKBM}. However, we expect that this statement has to be modified for~$n \geq 3$ to account for multiplicities: it is not true in general that $\Hom_{\bK}(\sigma_{\fP}(\tau), \pi)$ is one-dimensional when it is nonzero. The general statement should be
\begin{displaymath}
Z(R_{\rhobar}(\tau, \lambda)_{\fP}/\pi_E) = \overline{\cyc}(\sigmabar^+_{\fP}(\tau, \lambda)),
\end{displaymath} 
because of the multiplicities
\begin{align*}
\dim \Hom_{\bK}(\sigma_{\fP}^+(\tau), \pi)  = &\; 1 \; \text{if} \; \rec(\pi)|_{I_K} \cong \tau \; \text{and} \; \fP(\pi) = \fP\\
& \; 0 \; \text{otherwise.}
\end{align*}
We offer two pieces of evidence towards this. 
The first is our main result, concerning the case~$\fP_{\min}$, which gives a compatibility with the analogous statement on central division algebras. 
Second, observe that $\dim_E \Hom_{\bK}(\sigma_{\fP}(\tau), \pi_{\fP'}(\tau))$ equals the Kostka number~$K_{\fP, \fP'}$, and so we have an equality in the Grothendieck group
\begin{displaymath}
\sigmabar_{\fP}(\tau, \lambda) = \sum_{\deg \fP' = \deg \fP} K_{\fP, \fP'} \sigmabar_{\fP'}^+(\tau, \lambda)
\end{displaymath}
and
\begin{displaymath}
\sigmabar^+_{\fP}(\tau, \lambda) = \sum_{\deg \fP' = \deg \fP} K^+_{\fP, \fP'}\sigmabar_{\fP'}(\tau, \lambda)
\end{displaymath}
where~$(K_{\fP, \fP'}^+)$ is the inverse of the matrix $(K_{\fP, \fP'})$ of Kostka numbers. Now~\cite[Corollary~4.9]{ShottonBM} says that the direct analogues of our formulas give the right answer for deformation rings with $\ell$-adic coefficients, where $\ell \not = p$ is a prime number. This is also consistent with the work of Yao described in the introduction.

\section{Jacquet--Langlands transfers.}\label{JLtransfer}

In this section we construct a Jacquet--Langlands transfer of Serre weights from~$D^\times$ to~$\GL_n(F)$, and prove its compatibility with the inertial Jacquet--Langlands correspondence.
We also consider analogues for $\ell$-adic coefficients when~$\ell \ne p$, so we begin by fixing a prime number~$\ell$ (allowing, of course, the case $\ell = p$). 
We will mostly be interested in proving that our transfer preserves congruences of types.
However, we point out that in the case of trivial regular weight we can interpret our result as describing a Jacquet--Langlands correspondence for representations of maximal compact subgroups of~$D^\times$ and~$\GL_n(F)$.
This is because of the following lemma.

\begin{lemma}\label{typesrestriction}
Let~$R$ be an algebraically closed field of any characteristic (including~$\operatorname{char} R = p$) and let~$\tau$ be an irreducible smooth $R$-linear representation of~$\mO_D^\times$. Then~$\tau$ occurs in the restriction to~$\mO_D^\times$ of an irreducible smooth representation of~$D^\times$.
\end{lemma}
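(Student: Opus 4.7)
The plan is to exploit the structure $D^\times = \pi_D^{\bZ} \cdot \fo_D^\times$, where $\pi_D$ is a uniformizer chosen so that $\pi_D^n$ is a uniformizer of the center~$F^\times$. Since $F^\times$ is central, conjugation by $\pi_D^n$ on $\fo_D^\times$ is trivial, so the action of $\pi_D$ by conjugation on the set of isomorphism classes of smooth irreducible $R$-representations of $\fo_D^\times$ factors through $\bZ/n\bZ$. The key preliminary observation is that, since $\fo_D^\times$ is profinite and $R$ is algebraically closed, any smooth irreducible $R$-representation $\tau$ is finite-dimensional: a generator $v$ has open, hence finite-index, stabilizer, so the orbit $G \cdot v$ is finite and spans $\tau$. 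In particular, Schur's lemma applies to give $\End_{\fo_D^\times}(\tau) = R$.

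Next, I would let $m \mid n$ be the smallest positive integer such that $\tau^{\pi_D^m} \cong \tau$, and set $H = \pi_D^{m\bZ} \cdot \fo_D^\times$; note that $H$ is a normal subgroup of $D^\times$ of index~$m$. By definition of $m$, there exists an $R$-linear isomorphism $A : V_\tau \to V_\tau$ satisfying $A \tau(g) A^{-1} = \tau(\pi_D^m g \pi_D^{-m})$ for all $g \in \fo_D^\times$; by Schur's lemma any two such $A$ differ by a scalar. Using that every element of $H$ has a unique expression $\pi_D^{mk} g$ with $k \in \bZ$, $g \in \fo_D^\times$, the formula $\tilde{\tau}(\pi_D^{mk} g) = A^k \tau(g)$ defines a smooth representation of $H$ extending $\tau$ (consistency at the central element $\pi_D^n$ is automatic: $A^{n/m}$ commutes with all of $\tau(\fo_D^\times)$ and is therefore a scalar by Schur, matching the fact that $\pi_D^n \in F^\times$).

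Finally, I would invoke Mackey's irreducibility criterion for the finite-index normal subgroup $H \subseteq D^\times$. The extension $\tilde{\tau}$ is irreducible over $H$ because any $H$-subrepresentation is in particular an $\fo_D^\times$-subrepresentation of the irreducible $\tau$. For $i = 1, \ldots, m-1$ the conjugate $\tilde{\tau}^{\pi_D^i}$ restricts to $\tau^{\pi_D^i}$ on $\fo_D^\times$, which by the minimality of $m$ is not isomorphic to $\tau$; two non-isomorphic irreducibles are disjoint. Mackey's criterion therefore yields that $\Ind_H^{D^\times} \tilde{\tau}$ is an irreducible smooth representation of $D^\times$, and another application of Mackey (using that $H$ is normal of index~$m$ and contains $\fo_D^\times$) gives
\[
\bigl(\Ind_H^{D^\times} \tilde{\tau}\bigr)\big|_{\fo_D^\times} \;\cong\; \bigoplus_{i=0}^{m-1} \tau^{\pi_D^i},
\]
which contains $\tau$ as a summand. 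The one point requiring care, rather than outright difficulty, is confirming that the finite-dimensionality of smooth irreducibles and Schur's lemma remain available when $\mathrm{char}\,R = p$; both do, since the argument uses only that $R$ is algebraically closed and that $\fo_D^\times$ is profinite.
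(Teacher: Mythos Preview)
Your proof is correct and follows essentially the same strategy as the paper: extend~$\tau$ to its normalizer in~$D^\times$ and then induce, using Clifford--Mackey theory to verify irreducibility. The paper compresses the extension step by first extending trivially to~$F^\times\fo_D^\times$ and then citing a result of Vign\'eras to extend to the normalizer~$N = N_{D^\times}(\tau)$; your group~$H = \pi_D^{m\bZ}\fo_D^\times$ is exactly this~$N$, and you construct the extension by hand via the intertwiner~$A$, exploiting that $H/\fo_D^\times \cong \bZ$ is free so there is no cocycle obstruction. The explicit route has the virtue of being self-contained and of making transparent why the argument survives in characteristic~$p$. One small remark: your parenthetical about ``consistency at~$\pi_D^n$'' is not actually needed, since the expression $\pi_D^{mk}g$ is unique and the homomorphism property follows directly from the intertwining relation for~$A$; but the observation that $A^{n/m}$ is scalar is correct and harmless.
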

\begin{proof}
We regard~$\tau$ as a representation of~$F^\times \mO_D^\times$ with~$\pi_F$ acting trivially. As in~\cite[Section~4]{Vignerasl1}, $\tau$~extends to a representation~$\tau'$ of its normalizer $N = N_{D^\times}(\tau)$, and the induction $\Ind_{N}^{D^\times}(\tau')$ is an irreducible representation of~$D^\times$ containing~$\tau$.
\end{proof}

Choosing an isomorphism $\iota_\ell: \cbQ_\ell \to \bC$, one gets a Jacquet--Langlands transfer from inertial classes of $\cbQ_\ell$-representations of~$D^\times$ to inertial classes of~$\cbQ_\ell$-representations of~$\GL_n(F)$. 
Because the Harish--Chandra character is compatible with automorphisms of the coefficient field, this transfer is independent of the choice of $\iota_\ell$~\cite[10.1]{MScongruences}. 

\begin{defn}\label{JLtransfermap}
Define a map $\JL_{\bK}: R_{\cbQ_\ell}(\mO_D^\times) \to R_{\cbQ_\ell}(\GL_n(\mO_F))$ as follows. 
Let~$\sigma_D$ be an irreducible representation of~$\mO_D^\times$. 
Then~$\sigma_D$ is a type for some Bernstein component~$\fs_D$ of~$D^\times$, by Lemma~\ref{typesrestriction}, and we let~$\fs = \JL(\fs_D)$. 
We define $\JL_{\bK}(\sigma_D) = \sigma_{\fP_{\min}}^+(\fs)$. 
\end{defn}

\paragraph{Mod~$p$ reduction.} Set $\ell = p$. We construct a map
\begin{displaymath}
\JL_p : R_{\cbF_p}(\mO_D^\times) \to R_{\cbF_p}(\GL_n(\mO_F))
\end{displaymath}
and prove our main result, namely that $\JL_p(\sigmabar_D(\tau, \lambda)) = \sigmabar_{\fP_{\min}}^+(\tau, \lambda)$. Since every irreducible smooth $\cbF_p$-representation of a pro-$p$ group is trivial, it is enough to define a map
\begin{displaymath}
\JL_p: R_{\cbF_p}(\bd^\times) \to R_{\cbF_p}(\GL_n(\mbf)).
\end{displaymath}
We choose any $\mbf$-linear isomorphism $\iota: \bd \to \mbf_n$ and we define~$\JL_p$ to be the semisimplified mod~$p$ reduction of~$\chi \mapsto (-1)^{n+1}R_w(\chi)$, composed with the isomorphism $R_{\cbF_p}(\mbf_n^\times) \to R_{\cbQ_p}(\mbf_n^\times)$. Since~$R_w$ is constant on $\Gal(\mbf_n/\mbf)$-orbits, this is independent of the choice of~$\iota$.  
Recall the explicit formula~\ref{DLinduction}, and observe that~$\JL_p$ is a direct generalization of the construction in Section~2 of~\cite{GGBM}.

For any profinite group~$G$, one defines the Brauer character of a finite-dimensional representation~$V$ of~$G$ over a finite field~$\bF_q$ as in the finite group case, obtaining a function~$\chi(V)$ on the set of pro-$p$-regular conjugacy classes of~$G$ valued in~$\cbQ_p$. 
From Lemma~\ref{pregularconjugacy}, and the corresponding assertion for finite groups, we find that whenever~$G$ has an open normal pro-$p$ subgroup the Brauer character induces an isomorphism $R_{\cbF_p}(G) \otimes_{\bZ} \cbQ_p \to \mC^{(p)}(G, \cbQ_p)$, where~$R_{\cbF_p}(G)$ is the Grothendieck group of finite length smooth representations of~$G$ over~$\cbF_p$, and the target denotes the space of functions from the set of pro-$p$-regular classes of~$G$ to~$\cbQ_p$. We get an induced map
\begin{displaymath}
\JL_p: \mC^{(p)}(\mbf_n^\times, \cbQ_p) \to \mC^{(p)}(\GL_n(\mbf), \cbQ_p)
\end{displaymath}
such that if~$x \in \GL_n(\mbf)$ has a conjugate in~$\mbf_n^\times$ with degree~$\deg(x)$ over~$\mbf$ then
\begin{equation}\label{JLfunctions}
\JL_p(f)(x) = (-1)^{n+n/\deg(x)}(\GL_{n/\deg(x)}(\mbf_{\deg(x)}) : \mbf_n^\times)_{p'} \sum_{\gamma \in \Gal(\mbf_{\deg(x)}/\mbf)} f(\gamma x)
\end{equation}
by Proposition~\ref{DLinduction}.

\begin{thm}\label{comparison}
Let~$\tau$ be a discrete series inertial type for~$I_F$ and~$\lambda \in (\bZ_+^n)^{\Hom_{\bQ_p}(F, E)}$. Then we have the equality
\begin{displaymath}
\JL_p(\sigmabar_D(\tau, \lambda)) = \sigmabar_{\fP_{\min}}^+(\tau, \lambda)
\end{displaymath}
\end{thm}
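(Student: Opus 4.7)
The plan is to compare Brauer characters on pro-$p$-regular conjugacy classes of $\bK$, which by lemma~\ref{pregularconjugacy} are in bijection with those of $\GL_n(\mbf)$. I would first reduce to $\lambda = 0$: at a Teichm\"uller lift $x \in \bK$ corresponding to $x_D \in \mu_{F_n} \subseteq \fo_D^\times$, lemma~\ref{weightcharacter} gives $\tr L_\lambda^+(x_D) = \tr L_\lambda(x)$, and since Galois conjugates $\gamma x$ share the same characteristic polynomial over $F$ they are $\GL_n(F)$-conjugate, so the character of $L_\lambda$ factors out of the Galois sum in the formula~\ref{JLfunctions} defining $\JL_p$. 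This reduces the theorem to $\JL_p(\sigmabar_D(\tau)) = \sigmabar_{\fP_{\min}}^+(\tau)$, and by proposition~\ref{ellipticvanishing} together with the analogous vanishing on the $D^\times$-side both Brauer characters vanish except on conjugacy classes of $x \in \GL_n(\mbf)$ admitting a Teichm\"uller conjugate in $\mu_K$ that generates an unramified extension $L/F$ of degree $e$ dividing $n/e(\Theta_F)$.

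On the remaining classes, one computes the right-hand side by proposition~\ref{traceGL(n)} and the left-hand side by combining~\ref{JLfunctions} termwise with proposition~\ref{traceD}. The dimensional prefactor $(\GL_{n/e}(\mbf_e):\mbf_n^\times)_{p'}$ appearing in~\ref{JLfunctions} matches the ratio $\dim(\sigma^G_L)/\dim(\sigma^D_L)$ of interior-lifted $\bK_L$-type dimensions via the formal-degree formula of proposition~\ref{typedimension}, applied to the inner form $Z_D(L)^\times$ of $G_L = \GL_{n/e}(L)$. The symplectic factors $\epsilon^0_{\mu_K}(V_\theta)$ and $\epsilon^0_{\mu_K}(V_{\theta_L})$ are invariants of the endo-class and so appear identically on both sides. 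What remains is to match the character sums $\sum_{\gamma \in \Gal(L/F)} \chi(\gamma x)$ --- a priori the parameters $\chi^G$ for $\sigma_{\fP_{\min}}^+(\tau)$ and $\chi^D$ for $\sigma_D(\tau)$ are related through the inertial Jacquet--Langlands correspondence and differ by a specific character twist --- together with the two Deligne--Lusztig signs: $(-1)^{n+n/e}$ from induction at the top level $\GL_n(\mbf)$ in~\ref{JLfunctions}, and $(-1)^{n_{F[\beta]}+n_{L[\beta]}}$ from induction at the smaller level $\GL_{n_{F[\beta]}}(\be)$ in proposition~\ref{traceGL(n)}.

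To settle this last matching I would invoke the explicit type-theoretic form of the inertial Jacquet--Langlands correspondence proved in~\cite{InertialJL}, together with the interior-lifting analysis of proposition~\ref{symplecticsigns} and the base-change construction of~\cite{BHliftingI}, and verify that the twist $\chi^D = c \cdot \chi^G$ built into the inertial Jacquet--Langlands transfer (where $c$ is a character assembled from $\epsilon_{\Gal}$ and the symplectic signs) absorbs precisely both the $\epsilon_{\Gal}(x)$ factor appearing on the $\GL_n(F)$-side and the sign discrepancy between the two Deligne--Lusztig inductions. This compatibility --- between two a priori unrelated sign recipes operating at different levels, linked through the symplectic invariants of simple characters and their behavior under unramified base change --- is the main obstacle I anticipate, and is precisely what the author's introduction describes as an \emph{accumulation of coincidences}.
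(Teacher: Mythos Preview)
Your overall strategy matches the paper's: reduce to $\lambda = 0$ via lemma~\ref{weightcharacter}, restrict to classes represented in~$\mu_K$ via proposition~\ref{ellipticvanishing}, compute both sides via propositions~\ref{traceGL(n)} and~\ref{traceD} together with formula~(\ref{JLfunctions}), and match the dimensional prefactors via proposition~\ref{typedimension}. However, your account of the sign matching contains a genuine error.

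You assert that the symplectic invariants $\epsilon^0_{\mu_K}(V_\theta)$ and $\epsilon^0_{\mu_K}(V_{\theta_L})$ are ``invariants of the endo-class and so appear identically on both sides.'' This is false: the symplectic module $V_\theta = J^1_\theta/H^1_\theta$ depends on the ambient group, and the invariants $\epsilon^0_{\mu_K}(V_\theta)$ for~$\GL_n(F)$ and $\epsilon^0_{\mu_K}(V_{\theta_D})$ for~$D^\times$ differ in general. You also anticipate a character twist $\chi^D = c\cdot\chi^G$ in the inertial Jacquet--Langlands correspondence that would absorb the leftover signs; but the paper works throughout with the \emph{canonical} $\beta$-extensions, and the main theorem of~\cite{InertialJL} then gives the equality of Galois orbits $[\chi] = [\chi_D]$ with no twist whatsoever. (Note also that $\epsilon_{\Gal}$ appears in \emph{both} trace formulas, not only on the $\GL_n(F)$-side.) So neither of your proposed mechanisms for absorbing the two Deligne--Lusztig signs $(-1)^{n+n_L}$ and $(-1)^{n_{F[\beta]}+n_{L[\beta]}}$ is actually available.

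The correct mechanism, which the paper quotes from the end of the proof of theorem~4.9 in~\cite{InertialJL}, is that the $\epsilon^0$-invariants on the two inner forms are related by explicit sign identities:
\begin{displaymath}
(-1)^{n+n_K+n_{F[\beta]}}\epsilon_{\mu_K}^0(V_\theta) = -\epsilon_{\mu_K}^0(V_{\theta_D}), \qquad (-1)^{n_L+n_K+n_{L[\beta]}}\epsilon_{\mu_K}^0(V_{\theta_L}) = -\epsilon_{\mu_K}^0(V_{\theta_{D,L}}).
\end{displaymath}
Multiplying these two, the $n_K$ contributions and the two minus signs cancel, and the ratio of the products $\epsilon^0_{\mu_K}(V_{\theta_D})\epsilon^0_{\mu_K}(V_{\theta_{D,L}})$ and $\epsilon^0_{\mu_K}(V_\theta)\epsilon^0_{\mu_K}(V_{\theta_L})$ is exactly $(-1)^{n+n_L+n_{F[\beta]}+n_{L[\beta]}}$, the product of the two Deligne--Lusztig signs you identified. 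Thus the ``accumulation of coincidences'' lives in the comparison of symplectic sign invariants across inner forms, not in a twist of the level zero character.
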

\begin{proof}
We have an equality of Brauer characters 
\[
\chi(\sigmabar_{\fP_{\min}}^+(\tau, \lambda)) = \chi(\overline{L}_{\lambda})\chi(\sigmabar^+_{\fP_{\min}}(\tau)), 
\]
and similarly 
\[
\chi(\sigmabar_D(\tau, \lambda)) = \chi(\overline{L}_{\lambda}^+)\chi(\sigmabar_D(\tau)). 
\]
The representation $\sigma^+_{\fP_{\min}}(\tau)$ is smooth and defined over a finite extension $E/ \bQ_p$, so we can compute~$\chi(\sigmabar^+_{\fP_{\min}}(\tau))$ as the restriction of the trace of~$\sigma^+_{\fP_{\min}}(\tau)$ to $p$-regular conjugacy classes: this follows from the corresponding statement in the finite group case, via Lemma~\ref{pregularconjugacy}. By Proposition~\ref{ellipticvanishing}, both $\chi(\sigmabar_D(\tau))$ and $\chi(\sigmabar^+_{\fP_{\min}}(\tau))$ vanish away from certain conjugacy classes represented by roots of unity. If~$z$ and~$z_D$ are matching $p$-regular roots of unity, then Lemma~\ref{weightcharacter} actually implies that $\chi(\overline{L}_{\lambda})(z) = \chi(\overline{L}_{\lambda}^+)(z_D)$, because the Brauer character of a representation of the finite groups generated by~$z$ and~$z_D$ can be computed on a lift to characteristic zero. Hence it is enough to prove that $\JL_p \left ( \chi(\sigmabar_D(\tau)) \right ) = \chi(\sigmabar^+_{\fP_{\min}}(\tau))$. 

Let~$\fs(\tau)$ be the simple inertial class corresponding to~$\tau$.
Let~$\Theta_F = \cl(\fs(\tau))$, and recall that we have fixed a lift~$\Theta_E \to \Theta_F$ to the unramified parameter field.
If~$\theta$ is a maximal simple character in~$\GL_n(F)$ with endo-class~$\Theta_F$, this gives rise to a conjugacy class of isomorphisms
\[
J_\theta/J^1_\theta \isom \GL_{n/\delta(\Theta_F)}(\be).
\]
We fix a simple stratum~$[\fA, \beta]$ for~$\theta$, and a maximal unramified extension $K^+/F[\beta]$ in~$Z_A(F[\beta])$ such that $J_\theta \subseteq \bK$ and the maximal unramified extension~$K$ of~$F$ in~$K^+$ normalizes the group~$\bK$. 
Let~$[\chi] = \Lambda(\fs(\tau), \Theta_E, \kappa_\theta)$, so that by Proposition~\ref{thetaindependence} we have
\[
\sigma^+_{\fP_{\min}}(\tau) \cong \Ind_{J_\theta}^{\bK}(\kappa_\theta \otimes (-1)^{n/\delta(\Theta_F)+1}R_w(\chi)).
\]
By the main results of~\cite{InertialJL}, the invariants of~$\fs_D(\tau) = \JL^{-1}(\fs(\tau))$ are
\[
\cl(\fs_D(\tau)) = \Theta_F \text{ and } \Lambda(\fs_D(\tau), \Theta_E, \kappa_{\theta_D}) = [\chi].
\]
It follows that we can choose a maximal simple character~$\theta_D$ in~$D^\times$ with $\cl(\theta) = \cl(\theta_D)$ and a simple stratum~$[\mO_D, \beta_D]$ for~$\theta_D$ such that~$\sigma_D(\tau)$ is isomorphic to the induction $\Ind_{J_{\theta_D}}^{\mO_D^\times}(\kappa_{\theta_D} \otimes \chi)$.
We fix a maximal unramified extension~$K^+_D/F[\beta_D]$ in~$Z_D(F[\beta_D])$ and write~$K_D$ for the maximal unramified extension of~$F$ in~$K^+_D$. Since the Jacquet--Langlands correspondence preserves torsion numbers, we have $[K:F] = [K_D:F]$, and there exists a unique isomorphism $\iota: K_D \to K$ such that the equality of endo-classes $\cl(\theta_{D, K}) = \iota^* \cl(\theta_K)$ holds.

Let~$z \in \mu_ K$ and~$z_D \in \mu_{K_D}$ generate isomorphic extensions of~$F$, which we identify via~$\iota$ with an unramified extension~$L/F$. By Propositions~\ref{traceGL(n)} and~\ref{traceD} we have equalities
\begin{equation}\label{tracesplit}
\tr \sigma_{\fP_{\min}}^+(\tau)(z) = (-1)^{n/[F[\beta]:F] + n/[L[\beta]:F]}\epsilon_{\mu_K}^0(V_{\theta_L})\epsilon_{\mu_K}^0(V_\theta)\dim(\sigma_L^+)\sum_{\gamma \in \Gal(L/F)}\chi(\gamma z)
\end{equation}
and 
\begin{equation}\label{tracenonsplit}
\tr \sigma_D(\tau)(z_D) = \epsilon_{\mu_K}^0(V_{\theta_{D, L}})\epsilon_{\mu_K}^0(V_{\theta_D})\dim(\sigma_{D, L}^+)\chi(x).
\end{equation}
These compute the Brauer characters of the mod~$p$ reductions $\sigmabar_{\fP_{\min}}^+(\tau)$ and~$\sigmabar_D(\tau)$ at~$z$ and~$z_D$. It follows that
\begin{displaymath}
\JL_p \left ( \sigmabar_D(\tau)(z) \right ) = (-1)^{n+n/[L:F]}(\GL_{n/[L:F]}(\mbf_{[L:F]}):\mbf_n^\times)_{p'}\epsilon^0_{\mu_K}(V_{\theta_{D, L}})\epsilon^0_{\mu_K}(V_{\theta_D})\dim(\sigma_{D, L}^+)\sum_{\gamma \in \Gal(L/F)}\chi(\gamma z_D)
\end{displaymath}
by Formula~(\ref{JLfunctions}), and we have to compare this to~(\ref{tracesplit}).

Recall from Remark~\ref{independencedimension} that~$\dim(\sigma_L^+)$ and~$\dim(\sigma_{D, L}^+)$ are equal to the dimensions of the $\bK$-types corresponding to an arbitrary choice of \emph{maximal} simple types with maximal simple character~$\theta_L$, respectively~$\theta_{D, L}$. 
By our choice of~$\iota: F[z_D] \to F[z]$, these characters have the same endo-class, hence we can choose maximal simple types with simple characters~$\theta_L$ and~$\theta_{D, L}$ that determine inertial classes corresponding to each other under the Jacquet--Langlands correspondence between $Z_{D^\times}(F[z_D])$ and $Z_{\GL_n(F)}(F[z])$ (identified with groups over~$L$ via~$\iota$). 
By Proposition~\ref{typedimension}, this implies that
\begin{displaymath}
\dim(\sigma_{D, L}^+)(\GL_{n/[L:F]}(\mbf_{[L:F]}):\mbf_n^\times)_{p'} = \dim(\sigma_L^+)
\end{displaymath}
since~$\mbf_{[L:F]}$ is isomorphic to the residue field of~$L$ and $\mbf_n^\times \cong ((\mbf_{[L:F]})_{n/[L:F]})^\times$.
Finally, the computations at the end of the proof of~\cite[Theorem~4.10]{InertialJL} show that\footnote{The integers there denoted~$m$ are all equal to one, since~$D$ is a division algebra.}
\begin{displaymath}
(-1)^{n+n/[K:F]+n/[F[\beta]:F]}\epsilon_{\mu_K}^0(V_\theta) = -\epsilon_{\mu_K}^0(V_{\theta_D})
\end{displaymath}
and
\begin{displaymath}
(-1)^{n/[L:F] + n/[K:F] + n/[L[\beta]:F]}\epsilon_{\mu_K}^0(V_{\theta_L}) = -\epsilon_{\mu_K}^0(V_{\theta_{D, L}}).
\end{displaymath}
\end{proof}

We remark that when the weight $\lambda = 0$, Theorem~\ref{comparison} implies that the following diagram
\begin{equation}
\begin{tikzcd}
R_{\cbQ_p}(\mO_D^\times) \arrow[r, "\JL_{\bK}"] \arrow[d, "\br_p"] & R_{\cbQ_p}(\GL_n(\mO_F)) \arrow[d, "\br_p"]\\
R_{\cbF_p}(\mO_D^\times) \arrow[r, "\JL_p"] & R_{\cbF_p}(\GL_n(\mO_F))
\end{tikzcd}
\end{equation}
commutes.

\paragraph{Mod~$\ell$ reduction.} 
Now assume $\ell \not = p$. 
By our discussion of $\bK$-types for~$D^\times$, Lemma~\ref{typesrestriction} implies that every irreducible smooth $\cbQ_\ell$-representation~$\tau$ of~$\mO_D^\times$ is a $\bK$-type for a Bernstein component of~$D^\times$. As such, there exists a simple character~$\theta$ such that $\tau \cong \Ind_{J_\theta}^{\mO_D^\times}(\kappa_\theta \otimes \chi)$ for some character~$\chi$ of~$J_\theta/J^1_\theta$, and the $\mO_D^\times$-conjugacy class of the maximal simple type $(J_\theta, \kappa_\theta \otimes \chi)$ is uniquely determined by~$\tau$.

\begin{lemma}\label{reductionreps}
Every irreducible $\cbF_\ell$-representation of~$D^\times$ is the mod~$\ell$ reduction of a $\cbQ_\ell$-representation of~$\mO_D^\times$. The mod~$\ell$ reduction of an irreducible $\cbQ_\ell$-representation~$\tau$ of~$\mO_D^\times$ is irreducible.
\end{lemma}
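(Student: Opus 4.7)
The plan is to realize every irreducible smooth representation of $\fo_D^\times$ as an induction from a maximal simple type and reduce both assertions to structural properties of these types. For an irreducible $\cbF_\ell$-representation $\bar\tau$, I would first show that $\bar\tau \cong \Ind_{J_\theta}^{\fo_D^\times}(\bar\kappa_\theta \otimes \bar\chi)$ for some maximal simple character $\theta$, a canonical $\beta$-extension $\bar\kappa_\theta$, and a character $\bar\chi \colon J_\theta/J_\theta^1 \cong \be_{n/\delta(\Theta_F)}^\times \to \cbF_\ell^\times$. This parallels the $\cbQ_\ell$-discussion immediately preceding the lemma: the argument of lemma~\ref{typesrestriction} carries over to $\cbF_\ell$-coefficients since $\ell\neq p$, showing that $\bar\tau$ embeds into the restriction of an irreducible $\cbF_\ell$-representation of $D^\times$ and is therefore a $\bK$-type.

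For the lifting, the simple character $\theta$ is valued in $p$-power roots of unity and so has a canonical $\bar{\bZ}_\ell$-model; the Heisenberg representation and canonical $\beta$-extension $\kappa_\theta$ have $p$-power dimensions, hence their mod-$\ell$ reductions are irreducible and they admit canonical $\bar{\bZ}_\ell$-models. The character $\bar\chi$ takes values in the prime-to-$\ell$ torsion of $\cbF_\ell^\times$, so lifts uniquely through Teichm\"uller to $\chi \colon \be_{n/\delta(\Theta_F)}^\times \to \cbZ_\ell^\times$. Then $\Ind_{J_\theta}^{\fo_D^\times}(\kappa_\theta \otimes \chi)$ is a $\cbQ_\ell$-representation whose mod-$\ell$ reduction is $\bar\tau$; its irreducibility over $\cbQ_\ell$ is precisely what the second assertion, applied to this lift, will deliver.

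For the second assertion, I would write $\tau = \Ind_{J_\theta}^{\fo_D^\times}(\lambda)$ with $\lambda = \kappa_\theta \otimes \chi$ and compute $\End_{\fo_D^\times}(\bar\tau)$ by the Mackey decomposition. The identity coset contributes $\End_{J_\theta}(\bar\lambda) = \cbF_\ell$, since $\bar\lambda$ remains absolutely irreducible: $\bar\kappa_\theta$ is irreducible of $p$-power dimension and $\bar\chi$ is a character. For the nontrivial double cosets I invoke the Bushnell--Kutzko--S\'echerre intertwining formula for simple characters, which gives $I_{D^\times}(\bar\theta) = J_\theta^1 (D')^\times J_\theta^1$ independently of the coefficient field. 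This set intersects $\fo_D^\times$ in $J_\theta^1 \fo_{D'}^\times J_\theta^1 = J_\theta$, because $(D')^\times \cap \fo_D^\times = \fo_{D'}^\times$. Therefore $I_{\fo_D^\times}(\bar\lambda) \subseteq I_{D^\times}(\bar\theta) \cap \fo_D^\times = J_\theta$, forcing all nonidentity Mackey contributions to vanish.

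The main subtlety will be controlling the intertwining of $\bar\lambda$ in characteristic $\ell$: the full $D^\times$-normalizer $\bJ(\bar\lambda)$ may strictly contain $\bJ(\lambda)$ when $\bar\chi$ has a larger $\Gal(\be_{n/\delta(\Theta_F)}/\be)$-stabilizer than $\chi$. The saving grace is that this discrepancy is invisible after intersecting with $\fo_D^\times$: the coarser normalizer $\bJ(\bar\theta) = \bJ(\theta)$ already meets $\fo_D^\times$ exactly in $J_\theta$ (because $\pi_{D'}$ has positive valuation in $D^\times$), so the mod-$\ell$ Mackey argument reduces cleanly to a coefficient-free statement about the simple character $\theta$.
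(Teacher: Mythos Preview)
Your proposal is correct but takes a substantially different route from the paper, trading elementary group theory for the full machinery of $\ell$-modular type theory.

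For the first claim, the paper simply invokes the Fong--Swan theorem: every finite quotient of~$\fo_D^\times$ is an extension of a cyclic group by a $p$-group, hence solvable, so every irreducible $\cbF_\ell$-representation lifts. Your approach instead realizes~$\bar\tau$ as an induced maximal simple type and lifts each ingredient by hand. This gives a more explicit lift, but the step ``and is therefore a $\bK$-type'' is doing real work: you need the $\ell$-modular classification of irreducible representations of~$D^\times$ (as in M\'inguez--S\'echerre) to know that every irreducible $\cbF_\ell$-representation of~$D^\times$ is compactly induced from an extended simple type, and hence that its restriction to~$\fo_D^\times$ decomposes into inductions of types. Lemma~\ref{typesrestriction} alone does not give this. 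So your argument is valid provided you cite that input, whereas the paper's is self-contained.

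For the second claim, the paper restricts to~$1+\fp_D$: since $\fo_D^\times/(1+\fp_D)$ is cyclic the relevant Hecke algebra is commutative, so each $1+\fp_D$-constituent of~$\tau$ occurs with multiplicity one; reduction mod~$\ell$ preserves this because~$1+\fp_D$ is pro-$p$, and irreducibility follows. Your Mackey-plus-intertwining argument is equally valid and perhaps more conceptual: the intertwining set of the simple character is genuinely independent of the coefficient field (characteristic $\neq p$), and you correctly identify that the potentially enlarged normalizer~$\bJ(\bar\lambda)$ is irrelevant since already $\bJ(\theta)\cap\fo_D^\times = J_\theta$. Again, though, this presupposes that~$\tau$ has been written as an induced type, which the paper's argument does not need.
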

\begin{proof}
Since~$\mO_D^\times$ is a solvable group, the first claim is a consequence of the Fong--Swan theorem~\cite[Theorem~38]{Serrereps}. For the second claim, observe first that $\tau|_{1+\fp_D}$ is a direct sum with multiplicity one of representations forming a unique~$\mO_D^\times$-orbit. Indeed, by~\cite[Proposition~4.1]{Vignerasl1} the group $\Hom_{1+\fp_D}(\sigma, \tau)$ is a simple module for the Hecke algebra $\mH(\mO_D^\times, \sigma)$, for every representation~$\sigma$ of~$1+\fp_D$. Since the quotient $\mO_D^\times/1+\fp_D$ is cyclic, by~\cite[Proposition~4.2]{Vignerasl1} this Hecke algebra is commutative, hence its simple $\cbQ_\ell$-modules are one-dimensional, proving the claim of multiplicity one.
Now, if~$\tau^0$ is any $\cbZ_\ell$-lattice in~$\tau$ then the reduction $\overline{\tau}^0$ will again be a direct sum with multiplicity one of irreducible $\cbF_\ell$-representations of~$1+\fp_D$, because $1+\fp_D$ is a pro-$p$ group, and~$\mO_D^\times$ will act transitively on the summands. 
Hence every irreducible $\mO_D^\times$-subrepresentation of~$\overline{\tau}^0$ has to coincide with~$\overline{\tau}^0$.
\end{proof}

\begin{thm}~\label{modlJL} There exists a unique map~$\JL_\ell$ making the following diagram
\begin{equation}
\begin{tikzcd}
R_{\cbQ_\ell}(\mO_D^\times) \arrow[r, "\JL_{\bK}"] \arrow[d, "\br_\ell"] & R_{\cbQ_\ell}(\GL_n(\mO_F)) \arrow[d, "\br_\ell"]\\
R_{\cbF_\ell}(\mO_D^\times) \arrow[r, "\JL_\ell"] & R_{\cbF_\ell}(\GL_n(\mO_F))
\end{tikzcd}
\end{equation}
 commute.
\end{thm}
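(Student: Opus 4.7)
The plan is to deduce uniqueness from the surjectivity of $\br_\ell$, and existence by checking endo-class by endo-class that $\br_\ell \circ \JL_{\bK}$ factors through $\br_\ell$, reducing ultimately to a compatibility between Deligne--Lusztig induction and mod $\ell$ reduction of characters of finite cyclic groups.

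For uniqueness, the first statement of Lemma~\ref{reductionreps} says that every irreducible $\cbF_\ell$-representation of $\fo_D^\times$ lifts to characteristic zero, so the reduction map $\br_\ell : R_{\cbQ_\ell}(\fo_D^\times) \to R_{\cbF_\ell}(\fo_D^\times)$ is surjective and any $\JL_\ell$ making the diagram commute is determined.

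For existence, one must show that $\br_\ell \circ \JL_{\bK}$ vanishes on $\ker(\br_\ell)$. Every irreducible $\cbQ_\ell$-representation of $\fo_D^\times$ is a $\bK$-type $\Ind_{J_\theta}^{\fo_D^\times}(\kappa_\theta \otimes \chi)$ for a maximal simple character $\theta$ of some endo-class $\Theta_F$ and $\chi$ a character of the cyclic group $J_\theta/J_\theta^1$. Since $J_\theta^1$ is pro-$p$ while $\ell \ne p$, mod $\ell$ reduction preserves $\Theta_F$, so both Grothendieck groups decompose according to endo-classes over $F$ of degree dividing $n$ and we may work one endo-class at a time. Fix $\Theta_F$ together with a lift $\Theta_E \to \Theta_F$. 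By the construction of $\JL_{\bK}$ and Theorem~\ref{sigmamax}, the above type is sent to $\Ind_{J_{\theta'}}^{\bK}(\kappa_{\theta'} \otimes (-1)^{n_{F[\beta]}+1} R_w(\chi'))$ for some maximal simple character $\theta'$ in $\GL_n(F)$ of endo-class $\Theta_F$, where $\chi'$ is a character of $\be_{n/\delta(\Theta_F)}^\times$ in the Galois orbit corresponding to $\chi$. Parabolic induction, restriction to Galois orbits, and tensoring by the canonical $\beta$-extensions $\kappa_\theta$ and $\kappa_{\theta'}$ all commute with $\br_\ell$, so the problem reduces to showing that the map $\chi \mapsto R_w(\chi)$ from $R_{\cbQ_\ell}(\be_{n/\delta(\Theta_F)}^\times)$ to $R_{\cbQ_\ell}(\GL_{n_{F[\beta]}}(\be))$ descends to the mod $\ell$ Grothendieck groups.

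The main obstacle is this last Deligne--Lusztig compatibility, which I would establish via Brauer characters. By Proposition~\ref{DLinduction}, the value of the character of $R_w(\chi)$ at a semisimple element $x \in \be_{n/\delta(\Theta_F)}^\times$ is $\sum_{\gamma \in \Gal(\be_{\deg(x)}/\be)} \chi(\gamma x)$ up to an integer factor and a sign independent of $\chi$. When $x$ is $\ell$-regular, every $\gamma x$ is $\ell$-regular, so the character of $R_w(\chi)$ on $\ell$-regular classes depends on $\chi$ only through its restriction to the $\ell$-regular subgroup of $\be_{n/\delta(\Theta_F)}^\times$, hence only through the mod $\ell$ reduction of $\chi$. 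Since Brauer characters detect classes in $R_{\cbF_\ell}(\GL_{n_{F[\beta]}}(\be))$, this yields the desired compatibility and produces the map $\JL_\ell$.
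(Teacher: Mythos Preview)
Your overall strategy is the same as the paper's: uniqueness from surjectivity of $\br_\ell$, then reduce existence to the compatibility of $R_w$ with mod~$\ell$ reduction on the level zero part, checked via Brauer characters. However, there is a genuine gap in your final step, and the reduction step preceding it is not fully justified.

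The concrete gap is in your Brauer character computation. Proposition~\ref{DLinduction} only gives the value of $R_w(\chi)$ at \emph{semisimple} elements, but the Brauer character must be computed at all $\ell$-regular elements. Since $\ell \neq p$, every unipotent element of $\GL_{n_{F[\beta]}}(\be)$ is $\ell$-regular, so a general $\ell$-regular element $g$ has Jordan decomposition $g = su$ with $u$ possibly nontrivial, and Proposition~\ref{DLinduction} says nothing about such $g$. The paper closes this gap by invoking the full Deligne--Lusztig character formula: the value of $R_w(\chi)$ at $g = su$ is a sum, over conjugates of $s$ lying in the torus, of $\chi$ evaluated at those conjugates times a Green function of $u$ that is independent of $\chi$. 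Since the semisimple part $s$ of an $\ell$-regular element is again $\ell$-regular, this shows the Brauer character depends only on the restriction of $\chi$ to $\ell$-regular elements, which is what you need.

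Your reduction to the finite-group statement is also looser than the paper's. Saying that induction and tensoring by $\kappa_\theta$ commute with $\br_\ell$ lets you push reductions forward along $\chi \mapsto \Ind(\kappa_\theta \otimes \chi)$, but you need the converse direction: from $\br_\ell(\tau_1)=\br_\ell(\tau_2)$ you must extract information about the characters $\chi_i$. The paper does this in two steps you have skipped: first, Lemma~\ref{reductionreps} (second part) shows the reduction of an irreducible is irreducible, so it suffices to treat irreducible pairs with equal reduction; second, an intertwining argument (the $D^\times$-intertwining of $\kappa_{\theta_D}$ equals the normalizer of $\theta_D$) shows that $\br_\ell(\tau_1)=\br_\ell(\tau_2)$ forces the Galois orbits $[\chi_1]$ and $[\chi_2]$ to have the same mod~$\ell$ reduction. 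Only then does the Deligne--Lusztig compatibility finish the proof. Your endo-class decomposition is correct and replaces part of this, but it does not by itself give you control over the $\chi_i$.
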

\begin{proof}
The mod~$\ell$ reduction map for $\cbQ_\ell$-representations is defined as the direct limit of the reduction maps over finite extensions of~$\bQ_\ell$. That~$\JL_\ell$ is unique follows from the first claim in Lemma~\ref{reductionreps}, since the left vertical arrow is surjective. For the existence, by Lemma~\ref{reductionreps} it suffices to prove that if $\tau_1$ and~$\tau_2$ are irreducible representations of~$\mO_D^\times$ with the same mod~$\ell$ reduction, then $\br_\ell(\JL_{\bK}(\tau_1)) = \br_\ell(\JL_{\bK}(\tau_2))$. Indeed, this allows us to define $\JL_{\ell}(\sigmabar)$ as~$\br_{\ell} \, \JL_{\bK}(\sigma)$ for any irreducible lift~$\sigma$ of~$\sigmabar$, and then commutativity of the diagram holds by definition and the second part of Lemma~\ref{reductionreps}.

Since $\br_\ell(\tau_1) = \br_\ell(\tau_2)$, we have $\tau_1 \cong \tau_2 \otimes \psi$ for some character $\psi: \mO_D^\times/1+\fp_D \to \cbQ_\ell^\times$, because the restrictions $\tau_i|_{1+\fp_D}$ are isomorphic modulo~$\ell$, hence they are isomorphic over~$\cbQ_\ell$ as $1+\fp_D$ is a pro-$p$ group. Hence there exists a simple character~$\theta_D$ with endo-class~$\Theta_F$ such that $\tau_i = \Ind_{J_{\theta_D}}^{\mO_D^\times}(\kappa_{\theta_D} \otimes \chi_i)$ (where the~$\chi_i$ are computed with respect to a lift $\Theta_E \to \Theta_F$). By assumption, the representations $\br_\ell(\kappa_{\theta_D} \otimes \chi_i)$ intertwine in~$\mO_D^\times$, as they have isomorphic inductions to~$\mO_D^\times$. Since~$\kappa_{\theta_D}$ is a $\beta$-extension, the intertwining set of~$\kappa_{\theta_D}$ in~$D^\times$ coincides with that of~$\theta_D$, which is also equal to its normalizer $\pi_{D'}^{\bZ} \ltimes J_{\theta_D}$ (where we have fixed a parameter field $F[\beta]$ for~$\theta_D$, and $D' = Z_D(F[\beta])$). Hence we see that $\br_\ell[\chi_1] = \br_\ell[\chi_2]$, where~$[\chi_i]$ denotes the orbit under $\Gal(\be_{n/\delta(\Theta_F)}/\be)$.

There exists a maximal simple character~$\theta$ in~$\GL_n(F)$ with the same endo-class as~$\theta_D$, together with a conjugacy class of isomorphisms $J_\theta/J^1_\theta \to \GL_{n/\delta(\Theta_F)}(\be)$ induced by $\Theta_E \to \Theta_F$. We assume that the subgroup~$J_\theta$ is contained in~$\bK$, so that the virtual representation $\JL_{\bK}(\tau_i)$ is the induction $\Ind_{J_\theta}^{\bK}(\kappa_\theta \otimes (-1)^{n/\delta(\Theta_F) + 1}R_w(\chi_i))$.

To conclude, it suffices to prove that $\br_\ell R_w(\chi_1) = \br_\ell R_w(\chi_2)$, or that the $\ell$-Brauer characters of the~$R_w(\chi_i)$ coincide. These are the restrictions to $\ell$-regular classes in~$\GL_{n/\delta(\Theta_F)}(\be)$ of the characters of the~$R_w(\chi_i)$. An element of~$\GL_{n/\delta(\Theta_F)}(\be)$ is $\ell$-regular if and only if its semisimple part is~$\ell$-regular, because the unipotent elements of this group have order a power of~$p$. The character formula of Deligne and Lusztig (see~\cite[Theorem~4.2]{DLreps}) expresses the value of~$R_w(\chi_i)$ at~$g \in \GL_{n/\delta(\Theta_F)}(\be)$ with Jordan decomposition~$g = su$ in terms of a Green function evaluated at~$u$ (this is independent of~$\chi_i$) and the value of~$\chi_i$ at those conjugates of~$s$ contained in the inducing torus. 
If~$s$ is an $\ell$-regular element these character values for~$\chi_1$ and~$\chi_2$ coincide since we have seen that $[\chi_1^{(\ell)}] = [\chi_2^{(\ell)}]$ (because their mod~$\ell$ reductions are the same).
\end{proof}

\section{Breuil--M\'ezard conjectures.}\label{BMconjectures}

Fix a prime number~$\ell$, possibly equal to~$p$, and a finite extension $E/ \bQ_\ell$. Let~$\rhobar: \Gal(\overline{F}/F) \to \GL_n(\be)$ be a continuous representation. Let~$R^\square_{\rhobar}$ be the framed deformation ring of~$\rhobar$ over~$\mO_E$.

\paragraph{Case~$\ell = p$.} 
We prove the following more precise form of the first theorem in the introduction.
\begin{thm} 
Assume that~$E$ is large enough that all irreducible $\cbF_p$-representations of~$\GL_n(\mO_F)$ and~$\mO_D^\times$ are defined over~$\be$. Assume the geometric Breuil--M\'ezard conjecture for~$\GL_n(F)$, i.e. the existence of a homomorphism
\begin{displaymath}
\overline{\cyc}: R_{\be}(\GL_n(\mO_F)) \to Z^{d-1}(R^\square_{\rhobar}/\pi_E)
\end{displaymath}
such that $\overline{\cyc}(\sigmabar_{\fP_{\min}}^+(\tau, \lambda)) = Z(R^\square_{\rhobar}(\tau, \lambda)_{\fP_{\min}}/\pi_E)$ whenever~$\tau$ is defined over~$E$. 
Then there exists a homomorphism
\[
\lbar{\cyc}_{D^\times}: R_{\be}(\mO_D^\times) \to Z^{d-1}(R^\square_{\rhobar}/\pi_E)
\]
such that $\overline{\cyc}_{D^\times}(\sigmabar_D(\tau, \lambda)) = Z(R^\square_{\rhobar}(\tau, \lambda)_{\fP_{\min}}/\pi_E)$ whenever~$\tau$ is defined over~$E$.
\end{thm}
\begin{proof}
By Theorem~\ref{comparison}, it suffices to define
\begin{displaymath}
\overline{\cyc}_{D^\times} = \overline{\cyc} \circ \JL_p.
\end{displaymath}
\end{proof}
\begin{rk}
The statement of the Breuil--M\'ezard conjecture for~$\GL_n(F)$ that we assume in the previous theorem is given in~\cite[Conjecture~4.2.1]{EGBM} in the crystalline case and \cite[Conjecture~1.5.1]{LLHLMmodels} in the semistable case.
Given the Breuil--M\'ezard conjecture for~$\GL_n(F)$, our result gives a description of the mod~$p$ fibres of discrete series lifting rings in terms of the representation theory of~$\mO_D^\times$ and the type theory of~$D^\times$.
\end{rk}

\paragraph{Case $\ell \not = p$.}
In this case, we may not find a finite extension $E/\bQ_l$ such that all irreducible $\be$-representations of~$\mO_D^\times$ are absolutely irreducible. 
We assume that~$E$ is large enough that whenever~$\rhobar$ has a lift of inertial type~$\tau$ to some finite extension of~$\bQ_l$, then~$\tau$ and all the corresponding $\bK$-types for~$\GL_n(F)$ and~$D^\times$ are defined over~$E$. 
We also assume that~$E$ and $k_E$ are large enough that all irreducible components of~$\Spec(R^\square_{\rhobar}[1/p])$ and $\Spec(R^\square_{\rhobar}/\pi_E)$ are geometrically irreducible. 
For any pair~$(\tau, N)$ consisting of an inertial type and a monodromy operator, write $R^\square_{\rhobar}(\tau, N)$ for the corresponding quotient of the $\mO_E$-deformation ring $R^\square_{\rhobar}$, as in~\cite{ShottonBM}. 
The characteristic zero points of~$R^\square_{\rhobar}(\tau, N)$ correspond generically to lifts of~$\rhobar$ whose attached Weil--Deligne representation has inertial type~$\tau, N$. 
Define a map 
\begin{displaymath}
\mathrm{cyc} : R_E(\GL_n(\mO_F)) \to Z^d(R^\square_{\rhobar}), \;\; \sigma \mapsto \sum_{\tau, N} \dim_{\cbQ_\ell} \Hom_{\cbQ_\ell[\GL_n(\mO_F)]}(\sigma^\vee \otimes_E \cbQ_\ell, \pi_{\tau, N}) [R^\square_{\rhobar}(\tau, N)]
\end{displaymath}
where~$\pi_{\tau, N}$ is any irreducible generic $\cbQ_\ell$-representation of~$\GL_n(F)$ such that $\rec_{\cbQ_\ell}(\pi_{\tau, N})$ has inertial type~$\tau, N$.
The map~$\rec_{\cbQ_\ell}$ is only well-defined up to the choice of a square root of~$q$ in~$\cbQ_\ell$, but this plays no role when considering the inertial type. 
Similarly, we introduce a map
\begin{displaymath}
\mathrm{cyc}_{D^\times} : R_E(\mO_D^\times) \to Z^d(R^\square_{\rhobar}), \;\; \sigma \mapsto \sum_{\tau, N} \dim \Hom_{\cbQ_\ell[\GL_n(\mO_F)]}(\sigma^\vee \otimes_E \cbQ_\ell, \JL^{-1}(\pi_{\tau, N})) [R^\square_{\rhobar}(\tau, N)].
\end{displaymath}
In this formula we set~$\JL^{-1}(\pi) = 0$ when~$\pi$ is a generic representation that is not essentially square-integrable (this is consistent with the fact that the Langlands--Jacquet transfer is nonzero on elliptic representations only, and the only generic elliptic representations are the essentially square-integrable representations. See~\cite{Datelliptiques}.)

\begin{thm}[Breuil--M\'ezard conjecture for~$D^\times$, case~$\ell \not = p$]\label{modlBM}
Assume $p \not = 2$. There exists a unique map~$\overline{\cyc}_{D^\times, \ell}$ making the following diagram commute.
\begin{equation}\label{BMdivisionalgebra}
\begin{tikzcd}
R_E(\mO_D^\times) \arrow[d, "\br_{\ell}"] \arrow[r, "\cyc_{D^\times}"] & Z^d(R^\square_{\rhobar}) \arrow[d, "\mathrm{red}"] \\
R_{k_E}(\mO_D^\times) \arrow[r, "\overline{\cyc}_{D^\times, \ell}"] & Z^{d-1}(R^\square_{\rhobar}/ \pi_E) 
\end{tikzcd}
\end{equation}
\end{thm}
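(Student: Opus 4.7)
The plan is to combine Shotton's Breuil--M\'ezard theorem for $\GL_n(F)$ with the Jacquet--Langlands transfer $\JL_\ell$ of theorem~\ref{modlJL}. Applied to our $\rhobar$ and our choice of coefficients, theorem~4.6 of~\cite{ShottonBM} (whose hypothesis $p \neq 2$ we carry over) yields a map $\overline{\cyc}: R_{k_E}(\GL_n(\fo_F)) \to Z^{d-1}(R^\square_{\rhobar}/\pi_E)$ making the analogue of~(\ref{BMdivisionalgebra}) for $\GL_n(F)$ commute, that is $\overline{\cyc} \circ \br_\ell = \mathrm{red} \circ \cyc$. I would then set $\overline{\cyc}_{D^\times, \ell} = \overline{\cyc} \circ \JL_\ell$ and verify the commutativity of~(\ref{BMdivisionalgebra}).

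The key compatibility to establish is the equality $\cyc \circ \JL_{\bK} = \cyc_{D^\times}$ of maps $R_E(\fo_D^\times) \to Z^d(R^\square_{\rhobar})$. It suffices to check this on an irreducible $\sigma_D$, which is a $\bK$-type for a unique Bernstein component $\fs_D$ of $D^\times$; by construction $\JL_{\bK}(\sigma_D) = \sigma^+_{\fP_{\min}}(\fs)$ for $\fs = \JL(\fs_D)$, a discrete series inertial class. The defining property of $\sigma^+_{\fP_{\min}}(\fs)$ recalled in section~\ref{Galoisdeformationtheory} implies that $\cyc(\sigma^+_{\fP_{\min}}(\fs))$ is supported on the unique pair $(\tau, N)$ such that $\pi_{\tau,N}$ is essentially square-integrable in $\fs$, with multiplicity one. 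Likewise $\cyc_{D^\times}(\sigma_D)$ is supported on those $(\tau, N)$ for which $\JL^{-1}(\pi_{\tau, N})$ is nonzero and lies in $\fs_D$, which pins down the same pair; and the multiplicity there is one, because $\sigma_D$ is a $\bK$-type for $\fs_D$ and therefore appears with multiplicity one in the restriction to $\fo_D^\times$ of any irreducible representation of $D^\times$ in $\fs_D$.

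Granting this, the commutativity of~(\ref{BMdivisionalgebra}) follows by the diagram chase
\begin{displaymath}
\overline{\cyc}_{D^\times, \ell} \circ \br_\ell = \overline{\cyc} \circ \JL_\ell \circ \br_\ell = \overline{\cyc} \circ \br_\ell \circ \JL_{\bK} = \mathrm{red} \circ \cyc \circ \JL_{\bK} = \mathrm{red} \circ \cyc_{D^\times},
\end{displaymath}
where the second equality uses theorem~\ref{modlJL}, the third uses Shotton's theorem for $\GL_n$, and the fourth uses the compatibility established above. Uniqueness of $\overline{\cyc}_{D^\times, \ell}$ follows from the surjectivity of $\br_\ell: R_E(\fo_D^\times) \to R_{k_E}(\fo_D^\times)$, which is the first assertion of lemma~\ref{reductionreps} (a Fong--Swan argument using solvability of $\fo_D^\times$).

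The main obstacle is verifying the compatibility $\cyc \circ \JL_{\bK} = \cyc_{D^\times}$; although it ultimately reduces to an unwinding of definitions, it requires careful tracking of duality conventions, the correspondence between monodromy types and partitions on the discrete series locus, and the multiplicity-one behaviour of $\bK$-types for $D^\times$ on their inertial classes.
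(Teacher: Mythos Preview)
Your proposal is correct and follows essentially the same route as the paper: reduce to Shotton's theorem for $\GL_n$, transfer via $\JL_{\bK}$ and $\JL_\ell$, and invoke surjectivity of $\br_\ell$ for uniqueness. The paper phrases existence as the kernel inclusion $\ker(\br_\ell)\subseteq\ker(\mathrm{red}\circ\cyc_{D^\times})$ rather than by writing down $\overline{\cyc}_{D^\times,\ell}=\overline{\cyc}\circ\JL_\ell$ directly, but this is the same argument.

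One point you should make explicit: the maps $\JL_{\bK}$ and $\JL_\ell$ are defined between Grothendieck groups with $\cbQ_\ell$- and $\cbF_\ell$-coefficients, while $\cyc$, $\cyc_{D^\times}$ and $\overline{\cyc}$ are defined over $E$ and $k_E$. So the compositions in your diagram chase do not literally type-check. The paper handles this by choosing a finite extension $L/E$ over which all the relevant irreducible constituents are defined, and then using the standing hypothesis that the components of $\Spec R^\square_{\rhobar}[1/p]$ and $\Spec R^\square_{\rhobar}/\pi_E$ are geometrically irreducible to identify the cycle groups over $\fo_E$ and $\fo_L$. You should insert this step; once you do, your chase goes through verbatim.
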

\begin{proof}
Since the map~$\br_\ell$ is surjective for~$\mO_D^\times$, it suffices to prove that if~$x \in \ker(\br_\ell)$ then $x \in \ker(\red \circ \cyc_{D^\times})$. 
This says that every congruence between $\bK$-types gives rise to a congruence between deformation rings: it is not a formal statement. 

By~\cite[Theorem~4.6]{ShottonBM}, there exists a commutative diagram
\begin{equation}\label{BMGL(n)}
\begin{tikzcd}
R_E(\GL_n(\mO_F)) \arrow[d, "\br_{\ell}"] \arrow[r, "\cyc"] & Z^d(R^\square_{\rhobar}) \arrow[d, "\mathrm{red}"] \\
R_{k_E}(\GL_n(\mO_F)) \arrow[r, "\overline{\cyc}_\ell"] & Z^{d-1}(R^\square_{\rhobar}/ \pi_E) .
\end{tikzcd}
\end{equation}
Let~$x_{\cbQ_\ell}$ be the image of~$x$ in~$R_{\cbQ_\ell}(\mO_D^\times)$. 
Fix a finite extension $L/E$ large enough that all irreducible summands of~$x_{\cbQ_\ell}$ and $\JL_{\bK}(x_{\cbQ_\ell})$ are defined over~$L$. 
Then $\cyc_{D^\times}(x_{\cbQ_\ell}^\vee) = \cyc (\JL_{\bK}(x_{\cbQ_\ell})^\vee)$, where we regard $\JL_{\bK}(x_{\cbQ_\ell})$ as an element of~$R_L(\GL_n(\mO_F))$ and the two sides as cycles on the deformation ring with $\mO_L$-coefficients. 
Indeed, if~$\sigma$ is an $L$-representation of~$\mO_D^\times$ then we have by construction the equality
\begin{displaymath}
\dim \Hom_{\cbQ_\ell[\mO_D^\times]}(\sigma_{\cbQ_\ell}, \JL^{-1}(\pi_{\tau, N})) = \dim \Hom_{\cbQ_\ell[\GL_n(\mO_F)]}(\JL_{\bK}(\sigma_{\cbQ_\ell}), \pi_{\tau, N})
\end{displaymath}
because this equality holds on $\bK$-types for~$\mO_D^\times$, and by Lemma~\ref{typesrestriction} the $\bK$-types span $R_{\cbQ_\ell}(\mO_D^\times)$. 
Because of our assumptions on~$E$, the natural maps $Z^d(R^\square_{\rhobar}) \to Z^d(R^\square_{\rhobar} \otimes_{\mO_E} \mO_L)$ and $Z^{d-1}(R^\square_{\rhobar}/\pi_E) \to Z^{d-1}(R^\square_{\rhobar} \otimes_{\mO_E} k_L)$ are isomorphisms, hence it suffices to prove that 
\[
\red \, \cyc \, \JL_{\bK}(x_{\cbQ_\ell})^\vee = 0. 
\]
Since diagram~(\ref{BMGL(n)}) commutes  (working with $L$-coefficients in the diagram), we have that $\red \, \cyc \, \JL_{\bK}(x_{\cbQ_\ell})^\vee = \overline{\cyc}_{\ell} \,  \br_\ell \, \JL_{\bK}(x_{\cbQ_\ell})^\vee$. 
By Theorem~\ref{modlJL}, we have $\br_\ell \, \JL_{\bK}(x_{\cbQ_\ell})^\vee = (\br_\ell \, \JL_{\bK}(x_{\cbQ_\ell}))^\vee = (\JL_\ell \, \br_\ell (x_{\cbQ_\ell}))^\vee = 0$, and the claim follows.
\end{proof}

\bibliographystyle{amsalpha}
\bibliography{refpapers}

\end{document}